\DeclareMathAlphabet{\mymathbb}{U}{bbold}{m}{n}
\newcommand{\AAA}{\mathbb{A}}
\newcommand{\RR}{\mathbb{R}}
\newcommand{\ZZ}{\ts\mathbb{Z}}
\newcommand{\NN}{\mathbb{N}}
\newcommand{\QQ}{\mathbb{Q}\ts}
\newcommand{\CC}{\mathbb{C}}
\newcommand{\EE}{\mathbb{E}}
\newcommand{\PP}{\mathbb{P}}
\newcommand{\TT}{\mathbb{T}}
\newcommand{\XX}{\mathbb{X}}
\newcommand{\YY}{\mathbb{Y}}
\newcommand{\SSS}{\mathbb{S}}
\newcommand{\cA}{\mathcal{A}}
\newcommand{\cB}{\mathcal{B}}
\newcommand{\cI}{\mathcal{I}}
\newcommand{\cL}{\mathcal{L}}
\newcommand{\cO}{\mathcal{O}}
\newcommand{\cP}{\mathcal{P}}
\newcommand{\vD}{\varDelta}
\newcommand{\vG}{\varGamma}
\newcommand{\fp}{\mathfrak{p}}
\newcommand{\fq}{\mathfrak{q}}
\newcommand{\fa}{\mathfrak{a}}
\newcommand{\fb}{\mathfrak{b}}
\newcommand{\ts}{\hspace{0.5pt}}
\newcommand{\nts}{\hspace{-0.5pt}}
\newtheorem{theorem}{Theorem}
\newtheorem{prop}[theorem]{Proposition}
\newtheorem{coro}[theorem]{Corollary}
\newtheorem{lemma}[theorem]{Lemma}
\newtheorem{fact}[theorem]{Fact}
\theoremstyle{definition}
\newtheorem{definition}[theorem]{Definition}
\newtheorem{remark}[theorem]{Remark}
\newcommand{\ee}{\ts\mathrm{e}}
\newcommand{\dd}{\,\mathrm{d}\ts}
\newcommand{\card}{\mathrm{card}}
\newcommand{\dens}{\mathrm{dens}}
\newcommand{\den}{\mathrm{den}}
\newcommand{\lcm}{\mathrm{lcm}}
\newcommand{\vol}{\mathrm{vol}}
\newcommand{\ii}{\ts\mathrm{i}\ts}
\newcommand{\bs}{\boldsymbol}
\newcommand{\tr}{\mathrm{tr}}
\newcommand{\trans}{\scriptscriptstyle\mathsf{T}}
\newcommand{\No}{\mathrm{N}}
\newcommand{\pg}{\cP_{\nts\!_{\mathrm{G}}}}
\newcommand{\pe}{\cP_{\!_{\mathrm{E}}}}
\newcommand{\sg}{{\scriptscriptstyle \mathrm{G}}}
\newcommand{\nh}{\nu_{_\mathrm{H}}}
\DeclareMathOperator{\bigtimes}{\raisebox{-0.4ex}{\mbox{\huge $\times$}}}
\DeclareMathOperator{\Bigtimes}{\raisebox{-0.7ex}{\mbox{\Huge $\times$}}}
\newcommand{\exend}{\hfill$\Diamond$}
\newcommand{\defeq}{\mathrel{\mathop:}=}
\newcommand{\eqdef}{=\mathrel{\mathop:}}
\newcommand{\pa}{\phantom{a}}
\newcommand{\ol}[1]{{\ts\overline{\nts #1 \nts}\ts}}
\newcommand{\myfrac}[2]{\frac{\raisebox{-2pt}{$#1$}}
  {\raisebox{0.5pt}{$#2$}}}
\title[Dynamical spectrum of power-free integers]{Dynamical
  spectrum of power-free integers \\[2mm] in quadratic
  number fields and beyond}
\author{Michael Baake}
\address{Fakult\"at f\"ur Mathematik, Universit\"at Bielefeld, \newline
  \indent  Postfach 100131, 33501 Bielefeld, Germany}
\email{$\{$mbaake,dluz$\}$@math.uni-bielefeld.de}
\author{Daniel Luz}
\author{Tanja I. Schindler}
\address{Faculty of Mathematics and Computer Science,
  Jagiellonian University, \newline
\indent Ul.\  {\L}ojasiewicza 6, 30-348 Krakow, Poland, and \newline 
\indent Department of Mathematics and Statistics,
  University of Exeter, \newline
\indent Exeter EX4 4QF, United Kingdom
}
\email{tanja.schindler@uj.edu.pl}
\email{t.schindler@exeter.ac.uk}
\begin{document}

\begin{abstract}
  Power-free integers and related lattice subsets give rise to
  interesting dynamical systems.  They are revisited from a spectral
  perspective, in the setting of the Halmos--von Neumann theorem. With
  respect to the natural patch frequency measure, also known as the
  Mirsky measure, many of these systems have pure-point dynamical
  spectrum, but trivial topological point spectrum. We calculate the
  spectra explicitly, in additive notation, and derive their group
  structure, both for a large class of $\cB$-free lattice systems in
  $\RR^d$ and for power-free integers in quadratic number
  fields. Further, in all cases, the eigenfunctions can be given in
  closed form, via the Fourier--Bohr coefficients of generic elements
  and their translates, which form a subset of full Mirsky measure.
  Based on a simple argument via Kolmogorov's strong law of large
  numbers, we show how the Fourier--Bohr coefficients also provide the
  eigenfunctions for the unique measure of maximal entropy, and that
  we get phase consistency for both measures.
\end{abstract}

\keywords{$\cB$-free lattice systems, weak model sets, dynamical spectra,
  eigenfunctions, quadratic number fields, Bernoulli thinning}
\subjclass[2010]{37D40, 52C23}

\maketitle

\bigskip

\section{Introduction and overview}\label{sec:intro}

The set of square-free integers induces an interesting
\emph{topological dynamical system} (TDS) with pure-point dynamical
spectrum relative to the patch frequency (or Mirsky) measure.
Likewise, the visible (or primitive) points of the integer lattice
$\ZZ^2$ induce a TDS with analogous properties. Both systems have many
generalisations, either in the direction of $\cB$-free systems in one
dimension or towards their analogues among algebraic $\cB$-free
lattice systems in higher dimensions. An interesting class of examples
emerges from considering $\cB$-free systems of integers in algebraic
number fields, as well as certain $\cB$-free lattice systems in the
sense of \cite{BBHLN}.

Some of these systems are re-analysed here from a spectral
perspective, where we employ the cut and project formalism for weak
model sets developed in \cite{Meyer,Schreiber}. To do so, we use the
notion of a \emph{locally compact Abelian group} (LCAG) and refer to
\cite{Reiter} for background on harmonic analysis in this setting.
Our motivation comes from the Halmos--von Neumann theorem for ergodic
dynamical systems with pure-point spectrum. It states that two such
systems are measure-theoretically isomorphic if and only if they have
the same spectrum, thus establishing one of the first cases of a
complete invariant in dynamical systems theory.

The pure-point nature of the systems analysed below can be obtained
as a consequence of identifying them as weak model sets of maximal
density and then using a key result from \cite{BHS}. To actually
profit from spectral information, we need to calculate the spectrum
explicitly, which often is more difficult than merely establishing the
pure-point nature of a system. The point of this paper is that large
and interesting families of number-theoretic dynamical systems exist
where this is actually possible, thus giving an explicit
group-theoretic invariant to tell them apart with respect to
measure-theoretic conjugacy.

The systems we are interested in have many invariant measures.  Two
among them are special, namely the natural patch frequency measure,
which is also known as the Mirsky measure, and the measure of maximal
entropy. With respect to the Mirsky measure, which has zero entropy,
our systems have pure-point dynamical spectrum, but trivial
topological point spectrum. Nevertheless, via a generic element $V$,
we can calculate the eigenfunctions in closed form via the
\emph{Fourier--Bohr} (FB) coefficients of $V$.  The eigenfunctions are
known to be continuous on a set of full measure \cite{BHS,Keller}.

The measure of maximal entropy emerges from a joining (in fact, a
direct product) of the Mirsky measure with the Bernoulli measure of
the fair coin toss, and has mixed spectrum with a pure-point part (and
eigenfunctions that also derive from the FB coefficients of generic
elements) and an absolutely continuous part, with countable Lebesgue
spectrum. We give an easy and constructive proof for the
eigenfunctions with respect to the measure of maximal entropy, and
otherwise concentrate on the spectral theory with respect to the
Mirsky measure.  \smallskip

The paper is organised as follows. First, in Section~\ref{sec:guide},
where we also introduce our notation, we summarise key results of the
visible lattice points and the corresponding dynamical system in
Theorem~\ref{thm:vis-spec} and Corollary~\ref{coro:vis-spec}, which
will serve as our guiding example. We introduce a simplified method for 
the computation of the spectrum in Remark~\ref{rem:dual-alternative},
which will be vital for all later examples. We then construct the
eigenfunctions via the FB coefficients in
Theorem~\ref{thm:vp-spec} and derive the corresponding situation
for the measure of maximal entropy in Corollary~\ref{coro:mixed}, via
an application of Kolmogorov's strong law of large numbers.

An analogous situation is then met for the power-free integers in
Section~\ref{sec:power-free}, as summarised in
Theorem~\ref{thm:k-free-integers}, which suggests that a more general
setting is reasonable and possible. This is derived in
Section~\ref{sec:extend} in the setting of $\cB$-free lattice systems
of Erd\H{o}s type, which can be seen as a general framework for our
approach. Here, the spectrum and its group structure is presented in
Proposition~\ref{prop:gen-latt} and Theorem~\ref{thm:gen-latt}, thus
setting our general scene.

Then, we compute more concrete results for power-free integers in
quadratic number fields in Section~\ref{sec:quadratic}, where we treat
the cases of imaginary and real fields separately. The spectral
results are given in Theorems~\ref{thm:spectrum-imag} and
\ref{thm:spectrum-real}, respectively. This section requires some
ideal-theoretic arguments, though we begin both parts with cases of
class number $1$ that permit working with numbers rather than ideals.

Some underlying material on uniform distribution, which we need for
computing the eigenfunctions, is provided in the Appendix.

\clearpage

\section{A guiding example with many facets}\label{sec:guide}

Let us start with a well-studied example that will be a guide for most
of our generalisations. It emerges from the set of visible points $V$
of the integer lattice $\ZZ^d$, defined by
\[
    V \, \defeq \, \bigl\{ (x^{\pa}_{1}, \ldots , x^{\pa}_{d} ):
    \text{all } x^{\pa}_{i} \in \ZZ \text{ and}\,
    \gcd (x^{\pa}_{1}, \ldots , x^{\pa}_{d}) = 1 \bigr\} \ts ,
\]
where one enforces $d\geqslant 2$ to avoid trivial situations. The set
$V$ has holes of arbitrary size, as follows from an application of the
\emph{Chinese remainder theorem} (CRT). Consequently, though being
uniformly discrete, $V$ is not relatively dense, and thus not a Delone
set; see \cite{TAO} for background material. In fact, since the holes
appear lattice-periodically, no addition of a zero-density set can
turn $V$ into a Delone set.

The translation of $V$ by $t = (t^{\pa}_{1}, \ldots , t^{\pa}_{d})\in\ZZ^d$
is defined via $ t + V = \{ t + x : x \in V \}$, and the translation
orbit closure
\[
    \XX^{\pa}_{V} \, \defeq \, \overline{\ZZ^d + V}
\]
is a compact space, where the closure is taken in the \emph{local
topology}. In this special case of a Fell--Chabauty topology, two point
sets in $\ZZ^d$ are $\varepsilon$-close if they agree in a ball of
radius $1/\varepsilon$ around $0$ (one can equally well use cubes,
which defines the same topology). Then, the translation action of
$\ZZ^d$ on $\XX^{\pa}_{V}$ is continuous, and $(\XX^{\pa}_{V}, \ZZ^d)$ is a
TDS. It is far from being minimal, and $\XX^{\pa}_{V}$ contains the empty
set, which forms the only minimal subsystem. Furthermore, $\XX^{\pa}_{V}$
is subset closed or \emph{hereditary}, which is to say that, for each
$S\in \XX^{\pa}_{V}$, every subset of $S$ also lies in $\XX^{\pa}_{V}$; see
\cite{PH,BBHLN} and references therein for details.

Upon identifying the point set $V$ with its characteristic function,
$\bs{1}^{\pa}_{V}$, one obtains an equivalent representation of
$\XX^{\pa}_{V}$ as a subshift of the full shift, $\{ 0,1\}^{\ZZ^d}$,
where the natural product topology corresponds to the local topology
mentioned above. We shall always identify these two pictures, thus
profiting both from the geometric setting of sets and the symbolic
setting of subshifts. Since
$V = \ZZ^d \setminus \bigcup_{p\in\cP} p \ZZ^d$, we see that $V$
consists of all points from $\ZZ^d$ with the property that the zero
coset modulo $p \ZZ^d$ is missing, for every $p\in\cP$. Consequently,
$\XX^{\pa}_{V}$ must be contained in the subshift $\AAA$ of
\emph{admissible sets}, which are the subsets of $\ZZ^d$ with the
property that, for every $p\in\cP$, at least one coset modulo
$p \ZZ^d$ is missing. In fact, we even have $\XX^{\pa}_{V} = \AAA$,
compare \cite{Abda,BBHLN}, which is a strong property. Such subshifts
are called \emph{admissible}, and all examples discussed below belong
to this class.

The (natural) density of $V$ as a point set in $\RR^d$, denoted as
$\dens (V)$, is $1/\zeta (d)$, where $\zeta$ is Riemann's zeta
function. Here, the term `natural' refers to a sequence of centred,
closed balls $B_r (0)$ with growing radius being used as averaging
sequence, so
\[
    \dens (V) \, = \lim_{r\to\infty}
    \frac{\card \{ x\in V : \| x \| \leqslant r \} }
    {\vol (B_r (0))} \, = \, \myfrac{1}{\zeta (d)}
\]
holds for any $d\geqslant 2$; compare \cite{BMP} and references
therein. One interesting feature is that the set $V$ is an element of
$\XX^{\pa}_{V}$ with maximal density. In fact, $V$ is also maximal in the
sense that no single point can be added to $V$ without kicking it out
of $\XX^{\pa}_{V}$. Note that one can also use centred cubes or other
centred van Hove sequences, see the discussion on tied density in
\cite{BMP}, but the density cannot be uniform, due to the existence of
holes of arbitrary size in $V \!$.

With respect to the balls of growing radius (or to any tied averaging
sequence that is van Hove or F{\o}lner), also the relative patch
frequencies of $V$ exist. Here, a \emph{patch} $P$ is the intersection
of $V$ with a compact set $B$, so $P = V \cap B$, and its (natural)
frequency emerges as the limit of counting the number of occurrences
of $P$ (up to translations) within a large ball $B_r (0)$, divided by
the volume of the ball, in the limit as $r\to\infty$. Using these
frequencies as the measures of the cylinder sets defined by the
patches, they induce the (natural) patch frequency or \emph{Mirsky
  measure}, denoted by $\mu_{_\mathrm{M}}$, which is an invariant
probability measure on $\XX_V$. This measure is ergodic, and $V$ is
generic for it \cite{Keller,BHS}. Then,
$(\XX^{\pa}_{V}, \ZZ^{d}, \mu_{_\mathrm{M}})$ is a
\emph{measure-theoretic dynamical system} (MTDS) with the following
interesting spectral property \cite{BMP,BBHLN}, where
$\TT^d = \RR^d \nts /\ZZ^d = [0,1)^d$ denotes the $d$-torus with
coordinate-wise addition modulo $1$.

\begin{theorem}\label{thm:vis-spec}
  The dynamical system\/ $(\XX^{\pa}_{V},\ZZ^d, \mu_{_\mathrm{M}})$ has
  pure-point spectrum. In additive notation, the spectrum is given by
  the group\/
  $\{ k \in \QQ^d\cap \TT^d : \den (k) \text{ is square-free} \}
  \subset \TT^d$.  The corresponding eigenfunctions are measurable,
  but, except for the trivial eigenfunction, do not have a continuous
  representative. \qed
\end{theorem}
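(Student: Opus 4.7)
The plan is to realise $V$ as a weak model set of maximal density inside a cut-and-project scheme, invoke the pure-point criterion of \cite{BHS} for the Mirsky measure, and then explicitly compute the spectrum by identifying the dual of the underlying lattice. First I would set up the cut-and-project scheme with physical space $\RR^d$, internal LCAG $H = \prod_{p\in\cP}(\ZZ/p\ZZ)^d$ (a compact group with normalised Haar measure), and lattice $\cL \subset \RR^d \times H$ given by the diagonal embedding $x\mapsto(x,(x\bmod p\ZZ^d)^{\pa}_{p\in\cP})$ of $\ZZ^d$. The window is the clopen set $W = \prod_{p\in\cP}\bigl((\ZZ/p\ZZ)^d \setminus \{0\}\bigr)$, whose Haar measure equals $\prod_{p}(1 - p^{-d}) = 1/\zeta(d) = \dens(V)$. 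Hence $V$ is a weak model set of maximal density, and \cite{BHS} yields that $(\XX^{\pa}_V, \ZZ^d, \mu_{_\mathrm{M}})$ has pure-point dynamical spectrum, with $V$ generic for $\mu_{_\mathrm{M}}$.

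To identify the spectrum as a subgroup of $\TT^d$, the key step is to compute the image in $\widehat{\RR^d}$ of the dual lattice $\cL^{*}\subset\widehat{\RR^d}\times\widehat{H}$ under projection onto the first factor. Since $\widehat{H}$ is the restricted direct sum $\bigoplus_{p\in\cP}(\ZZ/p\ZZ)^d$, a general element of $\cL^{*}$ is a pair $(k,\chi)$ with $\chi$ a character of finite support on primes $p^{\pa}_{1},\ldots,p^{\pa}_{r}$, and the annihilation condition on $\cL$ forces $k\in\frac{1}{N}\ZZ^d$ with $N=p^{\pa}_{1}\cdots p^{\pa}_{r}$ square-free. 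Reading modulo $\ZZ^d$, the projection therefore equals $\{k\in\QQ^d\cap\TT^d : \den(k)\text{ is square-free}\}$, which is automatically a subgroup of $\TT^d$ as the image of a subgroup under a group homomorphism. This is also where the Euler product $1/\zeta(d)=\prod_{p}(1-p^{-d})$ naturally reappears.

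For the non-continuity statement, I would exploit the holes of $V$ via the CRT: for every $r>0$ there exists $t^{\pa}_r\in\ZZ^d$ with $(t^{\pa}_r+V)\cap B^{\pa}_r(0)=\varnothing$, so $t^{\pa}_r+V\to\varnothing$ in the local topology. The empty set is a $\ZZ^d$-fixed point of $\XX^{\pa}_V$, so any continuous eigenfunction $g$ with non-trivial eigenvalue satisfies $g(\varnothing)=0$; by continuity, $g(t^{\pa}_r+V)\to0$. But the eigenvalue equation gives $|g(t^{\pa}_r+V)|=|g(V)|$, hence $g(V)=0$, and then $g\equiv0$ on $\ZZ^d+V$ and on all of $\XX^{\pa}_V$ by continuity. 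Since a continuous representative of a non-trivial $L^2$-eigenfunction would be such a $g\not\equiv0$, no such representative can exist.

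The main obstacle will be the dual-lattice computation in the second step. The pure-point statement is a direct application of \cite{BHS} once the cut-and-project data are in place, and the non-continuity argument is essentially a one-line use of the CRT. In contrast, verifying that only \emph{square-free} denominators arise requires careful tracking of the Pontryagin dual of the infinite product $H$ and the precise annihilator conditions; this is exactly the number-theoretic feature that distinguishes visible points from their higher power-free analogues treated later in the paper, and I anticipate it will be the real content of the argument.
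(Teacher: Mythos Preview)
Your proposal is correct and follows essentially the same route as the paper: set up the CPS $(\RR^d,H,\cL)$ with $H=\prod_p(\ZZ/p\ZZ)^d$, verify that $V$ is a weak model set of maximal density via $\nh(W)=1/\zeta(d)=\dens(V)$, invoke \cite{BHS} for pure-pointedness, and compute the spectrum through the annihilator $\cL^0\subset\RR^d\times\widehat{H}$ with $\widehat{H}=\bigoplus_p(\ZZ/p\ZZ)^d$. One small slip: $W$ is \emph{not} clopen; it has empty interior in the product topology (every basic open set has cofinitely many full factors, while $W$ has none), which is precisely why the model set is only weak. This does not affect your argument, since what matters is compactness and $\nh(W)>0$.

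The one place you go beyond the paper is the non-continuity of eigenfunctions: the paper simply cites \cite{BHS,Keller}, whereas you supply the direct CRT argument via $t_r+V\to\varnothing$ and the fixed point $\varnothing$. That argument is correct; to make it airtight you should note that a continuous $g$ equal $\mu_{_\mathrm{M}}$-a.e.\ to an $L^2$-eigenfunction satisfies the eigenvalue relation everywhere on $\mathrm{supp}(\mu_{_\mathrm{M}})$, and that both $V$ and $\varnothing$ lie in this support (the latter because large empty patches occur in $V$ with positive frequency). Your final remark overstates the difficulty of the dual-lattice step: once $\widehat{H}$ is identified as the restricted direct sum, the annihilator condition reduces to a one-line congruence computation, exactly as the paper carries out around its dual CPS diagram.
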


Here, the \emph{denominator} of a vector $k\in\QQ^d$, denoted by
$\den (k)$, is uniquely specified by writing
$k = (k^{\pa}_{1}, \ldots , k^{\pa}_{d})/q$ with all $k_i \in \ZZ$ and
$q\in\NN$ subject to $\gcd(k^{\pa}_{1}, \ldots, k^{\pa}_{d}, q)=1$, which
gives $\den (k) = q$.  In particular, one has $\den (0) = 1$. The
statement on the eigenfunctions means that the system has trivial
topological point spectrum, though it is known that all eigenfunctions
are continuous on a subset of $\XX^{\pa}_{V}$ of full measure
\cite{BHS,Keller}. \smallskip

For some purposes, it is advantageous to also consider $V$ under the
translation action of $\RR^d$. Then, the hull is
$\YY^{\pa}_{\nts V} = \overline{ \{ t + V : t \in\RR^d \} }$, again in
the local topology. It is now modified by saying that two discrete
point sets in $\RR^d$ are $\varepsilon$-close if they agree in
$B_{1/\varepsilon} (0)$, possibly after shifting one of them by a
vector of length at most $\varepsilon$. On the symbolic side, this
corresponds to a standard suspension of $\ZZ^d$ into $\RR^d$; compare
\cite[Ch.~3.4]{VO}. The Mirsky measure extends accordingly, with
cylinder sets being defined by patches up to translations in a small
$\varepsilon$-ball, whose volume then appears as a factor to the
patch frequency.  Now, Theorem~\ref{thm:vis-spec} has the following
obvious counterpart; compare \cite{BMP,TAO,BBHLN,LSS}.

\begin{coro}\label{coro:vis-spec}
  The dynamical system\/ $(\YY^{\pa}_{\nts V},\RR^d, \mu_{_\mathrm{M}})$
  has measurable pure-point spectrum. In additive notation, the
  spectrum is given by the group
\[
  L^{\circledast} \, = \, \{ k \in \QQ^d : \den (k)
  \text{ is square-free} \} \, \subset \, \RR^d,
\]
  while the topological point spectrum is\/ $\ZZ^d$.  \qed
\end{coro}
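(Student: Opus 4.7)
The plan is to realise $(\YY^{\pa}_{\nts V}, \RR^d, \mu_{_\mathrm{M}})$ as the canonical $\RR^d$-suspension of $(\XX^{\pa}_{V}, \ZZ^d, \mu_{_\mathrm{M}})$, via the natural continuous surjection $\XX^{\pa}_{V} \times [0,1)^d \to \YY^{\pa}_{\nts V}$ that sends $(S,t)$ to $t+S$. Under this identification, the $\RR^d$-translation flow becomes the standard suspension flow, and the Mirsky measure on $\YY^{\pa}_{\nts V}$ is the descent of the product of the Mirsky measure on $\XX^{\pa}_{V}$ with normalised Lebesgue measure on $[0,1)^d$. Both assertions of the corollary then follow from Theorem~\ref{thm:vis-spec} combined with the standard spectral theory of suspensions (compare \cite[Ch.~3.4]{VO}).

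For the measurable spectrum, the suspension correspondence yields that $k \in \RR^d$ is an $L^2$-eigenvalue of the suspension if and only if its class $k + \ZZ^d \in \TT^d$ is an $L^2$-eigenvalue of the base $\ZZ^d$-system, and pure-pointedness is inherited automatically from the base. Combined with Theorem~\ref{thm:vis-spec}, this identifies the spectrum with the preimage of $\{q \in \QQ^d \cap \TT^d : \den(q) \text{ is square-free}\}$ under the canonical quotient $\RR^d \to \TT^d$. The remaining task is to match this preimage with $L^{\circledast}$, which reduces to two elementary observations: $\ZZ^d \subset L^{\circledast}$, because $\den(n)=1$ is vacuously square-free, and $L^{\circledast}$ is closed under addition, because for $k^{\pa}_{1}, k^{\pa}_{2} \in L^{\circledast}$ one has $\den(k^{\pa}_{1}+k^{\pa}_{2}) \mid \lcm(\den(k^{\pa}_{1}), \den(k^{\pa}_{2}))$ and both the lcm of square-free integers and its divisors remain square-free.

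For the topological point spectrum, I would identify the maximal equicontinuous factor (MEF). The map that sends a nonempty configuration $t+S \in \YY^{\pa}_{\nts V}$ to its translation parameter $t + \ZZ^d \in \TT^d$ is continuous and $\RR^d$-equivariant, with $\RR^d$ acting on $\TT^d$ by the natural translation. Since Theorem~\ref{thm:vis-spec} asserts that the base $\ZZ^d$-system has trivial topological point spectrum, this factor is already the MEF, and the topological point spectrum of the suspension equals its dual group $\widehat{\TT^d} = \ZZ^d$. The inclusion $\ZZ^d \subset$ topological point spectrum can alternatively be verified directly: for $k \in \ZZ^d$, the character $\ee^{2 \pi \ii \langle k, t \rangle}$ is $\ZZ^d$-invariant in $t$ and thus lifts through the MEF to a well-defined continuous eigenfunction on $\YY^{\pa}_{\nts V}$. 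The reverse inclusion follows by restricting a continuous eigenfunction to the $\ZZ^d$-invariant slice $\XX^{\pa}_{V}$ and invoking Theorem~\ref{thm:vis-spec} once more.

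The main technical obstacle is the careful treatment of the empty configuration, which is an isolated fixed point of the action and lies outside the suspension picture in a harmless but annoying way; this is standard for $\cB$-free lattice systems and is handled by restricting all spectral constructions to the invariant, full-measure complement of the trivial minimal subsystem. Once this is dealt with, both assertions follow essentially mechanically from Theorem~\ref{thm:vis-spec} and the suspension formalism.
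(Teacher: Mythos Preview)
Your proposal is correct and follows essentially the same approach as the paper. The paper does not give a detailed proof of this corollary at all: it simply declares it the ``obvious counterpart'' of Theorem~\ref{thm:vis-spec} via the standard suspension of the $\ZZ^d$-action to an $\RR^d$-flow (citing \cite[Ch.~3.4]{VO} and \cite{BMP,TAO,BBHLN,LSS}), which is exactly the mechanism you spell out. Your treatment of the measurable spectrum, the identification of the preimage with $L^{\circledast}$, the MEF argument for the topological part, and the caveat about the empty configuration are all appropriate elaborations of what the paper leaves implicit.
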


This extension is also natural when considering $V$ as a weak model
set of maximal density in the sense of \cite{BHS,RS}; see
\cite{Meyer,Schreiber} for origin and background.  This approach is
possible because $x\in \ZZ^d$ is an element of $V$ if and only if, for
every rational prime $p$, the reduction of $x$ modulo $p$ is not the
zero element of $\ZZ^d \nts / p \ZZ^d$. It is now natural to consider
the compact Abelian group
$H = \bigotimes_{p\in\cP} \ZZ^d \nts / p \ZZ^d$, where $\cP$ denotes
the set of rational primes, and define the mapping
$\star : \ZZ^d \longrightarrow H$ by
$x \mapsto x^{\star} = \bigl( x \bmod  p \ZZ^d \bigr)_{p\in\cP}$.
Then, we can characterise $V$ as
\[
  V \,  = \, \{ x \in \ZZ^d : \text{no coordinate of }
  x^{\star} \text{ is } 0 \} 
    \, = \, \{ x \in \ZZ^d : x^{\star} \in W \}
\]
with the coding set or \emph{window}
$W = \bigtimes_{\! p\in\cP} \bigl( (\ZZ^d \nts / p \ZZ^d)\setminus \{
0 \} \bigr)$, which is a compact subset of $H$. This situation is
usually summarised in a \emph{cut and project scheme} (CPS), which we
briefly recall for the example at hand; see \cite{Moody-rev,TAO} for
further background and \cite[Ch.~5a]{Bernd} for an alternative view
via adeles. Here, we have
\begin{equation}\label{eq:vis-cps}
\renewcommand{\arraystretch}{1.2}\begin{array}{r@{}ccccc@{}l}
   & \RR^d & \xleftarrow{\,\;\;\pi\;\;\,} & \nts\RR^d \times H \nts & 
        \xrightarrow{\;\pi^{\pa}_{\mathrm{int}\;}} & H & \\
   & \cup & & \cup & & \cup & \hspace*{-1ex} 
   \raisebox{1pt}{\text{\footnotesize dense}} \\
   & \ZZ^d & \xleftarrow{\; 1-1 \;} & \cL & 
        \xrightarrow{\; \hphantom{1-1} \;} & (\ZZ^d)^{\star} & \\
   & \| & & & & \| & \\
   & L & \multicolumn{3}{c}{\xrightarrow{\qquad\qquad\;\;\star
       \;\;\qquad\qquad}} 
       &  {L_{}}^{\star\nts}  & \\
\end{array}\renewcommand{\arraystretch}{1}
\end{equation}
where $\pi$ and $\pi^{\pa}_{\mathrm{int}}$ are the canonical projections
to \emph{direct space} $(\RR^d)$ and \emph{internal space} ($H$),
respectively. Further, $\cL = \{ (x, x^{\star}) : x \in \ZZ^d \}$ is
the diagonal embedding of $\ZZ^d$ into $\RR^d {\ts\times\ts} H$. Note
that $\cL$ still is a lattice in $\RR^d{\ts\times\ts}H$ (that is, a
co-compact discrete sub\-group).  As $\pi|^{\pa}_{\cL}$ is a bijection
between $\cL$ and its image, the $\star$-map is well defined. From now
on, we denote this kind of CPS by the triple $(\RR^d, H, \cL)$,
variants of which will appear throughout the paper.

The set $W$ is compact, but has empty interior (because $0$ is missing
from all components) and hence consists of boundary only. We are thus
in the realm of \emph{weak model sets}, and need to determine its type
according to \cite{BHS}. If $H$ is equipped with its normalised Haar
measure $\nh$, so $\nh (H) =1$, counting cosets modulo $p$ gives
\[
  \nh (W) \, = \prod_{p\in\cP} \frac{p^d - 1}{p^d} \, =
  \prod_{p\in\cP} \bigl( 1-p^{-d} \bigr) \, = \, \myfrac{1}{\zeta (d)}
  \ts ,
\]
which coincides with $\dens (V)$, where the infinite product converges
absolutely because we have $\sum_{p\in\cP} p^{-d} < \infty$ for
$d\geqslant 2$. Consequently, $V$ is a weak model set of maximal
density, because $\cL$ is a unimodular (co-volume one) lattice in
$\RR^d{\ts\times\ts} H$. Based on this connection, one can prove
Theorem~\ref{thm:vis-spec} and Corollary~\ref{coro:vis-spec} via the
general diffraction formula for weak model sets of maximal density
\cite{BHS,Keller} and the equivalence theorem for dynamical and
diffraction spectra \cite{LMS,BL}. In this setting, the relation
between topological and measure-theoretic aspects is reasonably well
understood; see \cite{Keller} and references therein for details.

There are (at least) two independent ways to compute the dynamical
spectrum, one via the countable supporting set of the pure-point
diffraction measure of $V$ (also known as the Fourier--Bohr spectrum)
and another via the dual CPS \cite{Moody-rev} with the lattice
$\cL^{0}$, which is the annihilator of $\cL$ from the original CPS,
and its projection into $\RR^d$. The first method was used in
\cite{BMP,BH}, and the second in \cite{RS}. Let us summarise the
latter, as we derive a simpler method from it to compute the spectrum
$L^{\circledast}$ and then generalise this approach to other systems.

Recall that the \emph{dual group} of $\RR^d {\ts\times\ts} H$ is
$\widehat{\RR^d{\ts\times\ts} H} \simeq \widehat{\RR^d} {\,\times\,}
\widehat{H}$, where $\widehat{\RR^d} \simeq \RR^d$ is self-dual and
\[
  \widehat{H} \, = \, \bigoplus_{p\in\cP} \ZZ^d/p \ZZ^d
\]
is the direct sum, where only \emph{finitely} many coordinates are
non-zero, by standard results from the harmonic analysis of LCAGs
\cite{Reiter}. The resulting dual CPS is
\begin{equation}\label{eq:vis-cps-dual}
\renewcommand{\arraystretch}{1.2}\begin{array}{r@{}ccccc@{}l}
   & \RR^d & \xleftarrow{\,\;\;\pi\;\;\,}
           & \nts\RR^d \times \widehat{H} \nts & 
        \xrightarrow{\;\pi^{\pa}_{\mathrm{int}\;}} & \widehat{H} & \\
   & \cup & & \cup & & \cup & \hspace*{-1ex} 
   \raisebox{1pt}{\text{\footnotesize dense}} \\
   & \pi (\cL^{0}) & \xleftarrow{\; 1-1 \;} & \cL^{0} & 
             \xrightarrow{\; \hphantom{1-1} \;} &
             \pi^{\pa}_{\mathrm{int}}(\cL^{0}) & \\
   & \| & & & & \| & \\
   & L^{\circledast} &
     \multicolumn{3}{c}{\xrightarrow{\qquad\qquad\;\;\star
       \;\;\qquad\qquad}} 
       &  (L^{\circledast})^{\star\nts}  & \\
\end{array}\renewcommand{\arraystretch}{1}
\end{equation}
which is a CPS of type $(\RR^d, \widehat{H}, \cL^{0})$, with $\cL^{0}$
the annihilator of $\cL$ from the above CPS $(\RR^d, H, \cL)$ in
\eqref{eq:vis-cps}. Usually, the dual group $\widehat{H}$ is defined
as the group of continuous characters on $H$ under pointwise
multiplication.  Here, we use an additive version, where
$k =(k_p)^{\pa}_{p\in\cP}\in\widehat{H}$ stands for the continuous
character $\chi^{\pa}_{k}$ defined by
\[
   y \mapsto \chi^{\pa}_{k} (y) \, \defeq \prod_{p\in\cP} \exp 
   \Bigl( 2 \pi \ii \, \frac{y_p \ts k_p}{p} \Bigr) \qquad
   \text{for } y = (y_p)^{\pa}_{p\in\cP} \in H  .
\]
Note that the character is well defined, since only finitely many
$k_p$ differ from $0$, so the product on the right-hand side is
effectively a finite one.  With this notation, the annihilator
$\cL^{0}$ consists of all points
$(u,k) \in \widehat{\RR^d} \times \widehat{H} \simeq \RR^d \times
\widehat{H}$ such that
\begin{equation}\label{eq:annihilation}
     \exp \bigl( 2 \pi \ii \ts x u \bigr) \prod_{p\in\cP} \exp 
     \Bigl( 2 \pi \ii \frac{x^{\star}_p \ts k^{\pa}_p}{p} \Bigr) \, = \, 1 
\end{equation}
holds for all $x\in\ZZ^d$ or, equivalently, for all
$(x,x^{\star}) \in \cL$.  Here, $xu$ and analogous expressions stand
for the standard inner product of two $d$-dimensional vectors.

By linearity and the rules for calculating modulo $p$, it is clear
that it suffices to satisfy this condition for $x = e_i$ running
through the standard integer basis of $\ZZ^d$. Observing that
$u = (u^{\pa}_{1}, \ldots, u^{\pa}_{d})$ and
$k_p = (k^{\pa}_{1,p} , \ldots , k^{\pa}_{d,p})$,
Eq.~\eqref{eq:annihilation} implies coordinate-wise conditions, namely
\begin{equation}\label{eq:m-diff}
    u^{\pa}_i \, = \, m^{\pa}_{i} - \sum_{p\in\cP} \frac{k^{\pa}_{i,p}}{p}
\end{equation}
with arbitrary $m^{\pa}_{i}\in\ZZ$.  In particular, $(u,k) \in \cL^{0}$
implies $(u+t,k) \in \cL^{0}$ for all $t\in\ZZ^d$, as is clear from
\eqref{eq:annihilation}. Since only finitely many $k_p$ differ from
$0$, one can bring each of these conditions to a form with common
denominator. Putting all conditions together, one can then see that
they are equivalent to
\begin{equation}\label{eq:spec}
  u \in \{ q \in \QQ^d : \den (q) \text{ is square-free} \} \eqdef
  L^{\circledast} ,
\end{equation}
which is the \emph{Fourier--Bohr} (or FB) spectrum of $V \!$. This
also gives the annihilator of $\cL$ as
$\cL^{0} = \{ ( u, u^{\star}) : u \in L^{\circledast} \}$, where the
$\star$-map is that of the dual CPS, as defined by
\[
  L^{\circledast} \nts \ni u \, \longmapsto \, u^{\star} \defeq \bigl(
  - \lcm ( \den (u), p) \ts u \bmod p \bigr)_{p\in\cP} \, ,
\]
where the entry at $p$ is $ - \den (u) \ts u \bmod p$ for all
$p \mid \den(u)$ and $0$ otherwise.  This mapping emerges from taking
all $m^{\pa}_i =0$ in \eqref{eq:m-diff} without loss of generality,
because $(u+t)^{\star} = u^{\star}$ for all $t\in\ZZ^d$.  More
precisely,
$\star \colon L^{\circledast} \xrightarrow{\quad} \widehat{H}$ is a
group homomorphism with kernel $\ZZ^d$.  Strictly speaking, we now
have two $\star$-maps, one for the original CPS and one for its
dual. Nevertheless, we shall use the same symbol for both, as
misunderstandings are unlikely.

\begin{remark}\label{rem:dual-alternative}
  The structure of $L^{\circledast}$ can also be understood and
  calculated as follows. Let $\cP$ be the set of rational primes,
  ordered and numbered increasingly, so
  $\cP = \{ p^{\pa}_{1}, p^{\pa}_{2}, \ldots \}$ with
  $p^{\pa}_{i} < p^{\pa}_{i+1}$ for all $i\in\NN$. We know from the above
  that $V = \ZZ^d \setminus \bigcup_{i=1}^{\infty} p_i \ZZ^d$. If we
  cut out only \emph{finitely} many cosets, for the primes
  $\{ p^{\pa}_{i_1}, \ldots, p^{\pa}_{i_s}\}$ say, we end up with a point
  set that is periodic with $p^{\pa}_{i_1} \! \cdots \ts p^{\pa}_{i_s} \ZZ^d$ 
  as its lattice of periods.

  Relevant for the diffraction of $V$ clearly are all possible
  lattices of the form
\[
     \bigl( p^{\pa}_{i_{1}} \ZZ^d \cap \ldots \cap p^{\pa}_{i_{s}} \ZZ^d 
     \bigr)^{*} \ts = \: \bigl( p^{\pa}_{i_{1}} \ZZ^d \bigr)^{*} + \ldots +
     \bigl( p_{i_{s}} \ZZ^d \bigr)^{*} \ts = \: p^{-1}_{i_{1}} \ZZ^d
     + \ldots + p^{-1}_{i_{s}} \ZZ^d ,
\]
for any choice of the finitely many primes
$p_{i_1}, \ldots , p_{i_s}$, where ${}^*$ denotes the dual of a
lattice. Now, the set of all elements of $\QQ^d$ that lie in some
lattice of this form consists of the rational vectors with square-free
denominators, which are precisely the elements of $L^{\circledast}$.
Conversely, every element $k\in L^{\circledast}$ lies in some lattice
of this form, and we can thus write the set $L^{\circledast}$ as
\[
    L^{\circledast} \ts = \sum_{p\in\cP} p^{-1} \ZZ^d ,
\]
where the elements of $L^{\circledast}$ are
the sums where only \emph{finitely} many terms are non-zero.
Note that $L^{\circledast} $ is a torsion-free Abelian group, here
realised as a subgroup of $\QQ^d$.    \exend
\end{remark}

\begin{remark}\label{rem:dual-discrete}
  Any $y\in L^{\circledast}$ has a unique decomposition $y=k+ t$ with
  $k \in \TT^d = \RR^d / \ZZ^d = [0,1)^d$, which is a fundamental
  domain for $\ZZ^d$, and $t\in\ZZ^d$. Consider now
  $\widetilde{H} = \bigoplus_{p\in\cP} \ts p^{-1} \ZZ^d \nts / \ZZ^d$
  as the direct sum of finite, discrete subgroups of $\TT^d$, where
  $\widetilde{H}=L^{\circledast}\nts \cap \TT^d$ as a set. Under
  addition modulo $1$, it is a subgroup of $\TT^d$, and we have
\[
  L^{\circledast}\! / \ZZ^d \, = \, 
  \bigoplus_{p\in\cP} p^{-1} \ts \ZZ^d \nts / \ZZ^d 
    \, = \, \{ q \in \TT^d : \den (q) \text{ is square-free} \}  \ts .
\]
All summands are finite Abelian groups, with
$p^{-1}\ZZ^d \nts / \ZZ^d\simeq C_{p}^{d}$. The chosen
representation allows to consider them as subgroups of $\TT^d$, which
is best suited for the dynamical interpretation. The elements of
$L^{\circledast}\! / \ZZ^d$ are uniquely represented as the sums where
only \emph{finitely} many (or no) terms are non-zero.  This also
clarifies the group structure of the FB spectrum
$L^{\circledast} \nts$.  Note that $\widetilde{H} \simeq \widehat{H}$,
but we write it in this way to match the structure of $\TT^d$. This is
an important fact because it actually simplifies the computation of
$L^{\circledast}$ considerably. In particular, it allows us to bypass
the somewhat tedious computation using the characters.  \exend
\end{remark}

The set $L^{\circledast}$ is also the \emph{dynamical spectrum} of the
$\RR^d$-flow induced by $V$, compare \cite{BL}, and its calculation
via the dual CPS will be possible more generally. In fact, this is
where Remark~\ref{rem:dual-alternative} will become quite crucial.
Likewise, the group $L^{\circledast}\! / \ZZ^d$ from
Remark~\ref{rem:dual-discrete} is the dynamical spectrum for the
discrete group action of $\ZZ^d$. Another advantage of the connection
with diffraction theory and a CPS is that this setting also allows for
a closed form of the eigenfunctions (relative to the Mirsky measure
$\mu_{_\mathrm{M}}$) as follows. For $y\in \RR^d$, we define the
corresponding \emph{FB coefficient} of $V$ as
\begin{equation}\label{eq:FB-def-1}
    a^{\pa}_{V} (y) \, \defeq \lim_{r\to\infty} \myfrac{1}{\vol (B_r (0))}
    \sum_{ x \in V_r } \ee^{- 2 \pi \ii \ts xy}, \qquad \text{with } 
    V_r = V \cap B_r (0) \ts .
\end{equation}
The limit exists for all $y\in\RR^d$, as can be shown with an explicit
convergence argument. The latter is based on approximating $V$ by a
nested sequence of periodic point sets that are obtained from taking
only the first $N$ primes into account and then letting $N\to\infty$.
This is a natural way to view the limit, as it actually shows how the
coefficients emerge from those of a series of approximating periodic
systems.

The volume-averaged exponential sum in \eqref{eq:FB-def-1} is
nontrivial for any $y\in L^{\circledast}$, which is a countable set,
and vanishes everywhere else \cite{BMP,RS}. Further, one observes the
behaviour under translations as
\[
    a^{\pa}_{t+V} (y) \, = \, \ee^{-2 \pi \ii \ts t y} \ts a^{\pa}_{V} (y) \ts .
\]
So, given $y\in L^{\circledast}$, we have an eigenvector equation
along the translation orbit of $V\!$, with eigenvalue
$\lambda_y = \ee^{-2\pi\ii\ts ty}$ on the unit circle, where $y $
provides the advantageous additive notation mentioned earlier.  The
corresponding definition and eigenvalue equation applies to all other
elements of the hull for which the limit exists, which are most of
them in a measure-theoretic sense, not only for $\mu_{_\mathrm{M}}$,
in ways we will explain later; see \cite{LS,B+} for the required
notions of genericity and their relation to Besicovitch almost
periodicity.

When we consider the $\ZZ^d$-action, it follows from the results of
\cite{BHS,Keller} that, except for the trivial eigenfunction (which
corresponds to $y=0$), no eigenfunction can possess a continuous
representative, but that they are continuous on a subset of
$\XX^{\pa}_{V}$ of full measure. This is related to $\XX^{\pa}_{V}$
being a limit of periodic systems (with pure-point spectrum and
continuous eigenfunctions) in such a way that the spectrum of
$(\XX^{\pa}_{V},\ZZ^d,\mu_{_\mathrm{M}})$ is still obtained as a
limit, but continuity of the eigenfunctions is lost.  Under the
suspension of $\XX^{\pa}_{V}$ to $\YY^{\pa}_{V}$ for the translation
action of $\RR^d$, precisely the eigenfunctions for $y\in \ZZ^d$ are
continuous, but none of the others. Still, it is sufficient to know
them on the translation orbit of sufficiently many elements of the
hull, for instance for the generic elements for $\mu_{_\mathrm{M}}$,
which is a set of full measure. Here, we derive the FB coefficients
for $V$, where we have the following result.

\begin{theorem}\label{thm:vp-spec}
  The FB coefficients of the visible lattice points\/ $V$ are given by
\[
    a^{\pa}_{V} (y) \, = \, \begin{cases} 
       \frac{1}{\zeta(d)} \prod_{p \mid \den (y)}
          \frac{1}{1 - p^d}, & \text{if }\ts
          y \in L^{\circledast} , \\
       0, & \text{otherwise}, \end{cases}
\]  
where\/
$L^{\circledast} = {\sum}_{p\in\cP} \, p^{-1} \ZZ^d$ from 
Remark~$\ref{rem:dual-alternative}$ is the
pure-point dynamical spectrum of the MTDS\/
$(\YY^{\pa}_{\nts V}, \RR^d, \mu_{_\mathrm{M}})$ from
Corollary~$\ref{coro:vis-spec}$. Further, $a^{\pa}_{V} (y)$ represents
the eigenfunction corresponding to\/ $y\in L^{\circledast}$, with
consistent relative phases in the sense of diffraction and the
normalisation\/ $a^{\pa}_{V} (0) = \dens (V)$.
\end{theorem}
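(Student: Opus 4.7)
The plan is to compute $a^{\pa}_V$ by approximating $V$ with the periodic sets
$V^{\pa}_N \defeq \ZZ^d \setminus \bigcup_{i=1}^{N} p^{\pa}_i \ZZ^d$,
where $\cP = \{p^{\pa}_1, p^{\pa}_2,\ldots\}$ is the ordered sequence of primes.
For each $N$, the set $V^{\pa}_N$ is periodic with period lattice $(p^{\pa}_1 \cdots p^{\pa}_N)\ts \ZZ^d$,
so its FB coefficients can be computed by elementary means: expanding
$\bs{1}^{\pa}_{V^{\pa}_N} = \prod_{i \leqslant N}(1 - \bs{1}^{\pa}_{p^{\pa}_i \ZZ^d})$ and invoking the CRT give
\[
   \bs{1}^{\pa}_{V^{\pa}_N} \, = \sum_{S \subseteq \{1,\ldots,N\}} (-1)^{|S|}\ts \bs{1}^{\pa}_{m^{\pa}_S \ZZ^d}
   \quad\text{with}\quad
   m^{\pa}_S \defeq \prod_{i\in S} p^{\pa}_i ,
\]
and since $a^{\pa}_{m \ZZ^d}(y) = m^{-d}$ for $y \in m^{-1} \ZZ^d$ and $0$ otherwise, a direct summation yields $a^{\pa}_{V^{\pa}_N}(y) = 0$ for $y \notin L^{\circledast}$, while
\[
   a^{\pa}_{V^{\pa}_N}(y) \, = \, (-1)^{|Q|}\ts \prod_{p\in Q} p^{-d}
   \cdot \prod_{\substack{j \leqslant N \\ p^{\pa}_j \notin Q}} \bigl(1 - p_j^{-d}\bigr)
\]
for $y \in L^{\circledast}$ with $\den (y) = \prod_{p \in Q} p$ and $Q \subseteq \{p^{\pa}_1,\ldots,p^{\pa}_N\}$; here, the surviving terms are those $S$ whose image in $\cP$ contains $Q$.

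To pass to the limit $N\to\infty$, I would use that $V \subseteq V^{\pa}_N$ together with
$|\ee^{-2\pi\ii\ts xy}| = 1$ and the van Hove property of centred balls to obtain the uniform bound
$|a^{\pa}_V(y) - a^{\pa}_{V^{\pa}_N}(y)| \leqslant \dens(V^{\pa}_N) - \dens(V)$, whose right-hand side tends to $0$ by the Euler product representation of $\zeta(d)^{-1}$. Taking $N\to\infty$ then yields
\[
   a^{\pa}_V(y) \, = \, \zeta(d)^{-1}\ts (-1)^{|Q|} \prod_{p \in Q} \frac{p^{-d}}{1 - p^{-d}}
   \, = \, \zeta(d)^{-1} \prod_{p \mid \den(y)} \frac{1}{1 - p^d}
\]
for $y \in L^{\circledast}$, and $a^{\pa}_V(y) = 0$ outside, with the normalisation $a^{\pa}_V(0) = \zeta(d)^{-1} = \dens(V)$ dropping out from the empty product.

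For the eigenfunction assertion, a change of variables in~\eqref{eq:FB-def-1}, combined with the van Hove property which absorbs the shift of the averaging ball in the limit, gives
$a^{\pa}_{t+V}(y) = \ee^{-2\pi\ii\ts ty}\ts a^{\pa}_V(y)$ for all $y, t \in \RR^d$. Consequently, on the $\RR^d$-orbit of $V$ inside $\YY^{\pa}_V$, which carries full Mirsky measure by genericity of $V$, the map $V' \mapsto a^{\pa}_{V'}(y)$ is an eigenfunction with eigenvalue $\ee^{-2\pi\ii\ts ty}$; the fact that the closed-form expression above is a nonzero complex number with a definite phase (not merely a modulus) delivers phase consistency in the sense of diffraction automatically. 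The main obstacle I anticipate is not the calculation itself but the justification that this orbit-defined object genuinely represents the $L^2$-eigenfunction predicted by Corollary~\ref{coro:vis-spec}: one needs to extend it measurably to a set of full $\mu_{_\mathrm{M}}$-measure in $\YY^{\pa}_V$, and to identify it with the abstract eigenfunction produced by spectral theory. This is where the generic-point results of \cite{BHS,Keller} and the Besicovitch almost periodicity framework of \cite{LS,B+} enter, and care is needed so that both the limit defining $a^{\pa}_V$ and the identification of eigenfunctions stay consistent beyond the orbit of $V$.
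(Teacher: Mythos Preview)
Your approach is essentially identical to the paper's own sketch: approximate $V$ by the nested periodic sets $V_N = \ZZ^d \setminus \bigcup_{i\leqslant N} p_i\ZZ^d$, compute their FB coefficients by inclusion--exclusion, and pass to the limit via the tail estimate $\dens(V_N) - \dens(V) \to 0$, which is exactly the ``filtering argument based on a sequence of periodic approximants'' the paper outlines. One small slip: the $\RR^d$-orbit of $V$ does \emph{not} carry full Mirsky measure (orbits are null sets for a non-atomic measure); what you need, and what you correctly identify in your final paragraph, is that the FB coefficients are defined and obey the eigenfunction relation on the set of \emph{generic} elements, which has full $\mu_{_\mathrm{M}}$-measure by the results of \cite{BHS,Keller}.
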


\begin{proof}[Sketch of proof]
  There are at least three strategies to prove the formula for
  $a^{\pa}_{V} (y)$, each giving different insight. Here, we sketch
  a filtering argument based on a sequence of periodic
  approximants, and mention two other ones in Remark~\ref{rem:other}.
 
  One can start from the formula
  $V=\ZZ^d \setminus \bigcup_{p\in\cP}p \ZZ^d$ and realise that
  $V = \lim_{n\to\infty} V_n$ with
  $V_n \defeq \ZZ^d \setminus \bigcup_{p\in\cP_n} p \ZZ^d$ in the
  local topology, where $\cP_n$ denotes the set of the first $n$
  primes. Here, one has $V_i \supset V_{i+1}$ for all $i\in\NN$, so
  $V \subseteq \bigcap_{i\in\NN} V_i$, where we actually get equality
  from a tail estimate (via the convergence of $\zeta(2)$ from a
  Weierstrass $M$-test) together with convergence in the local
  topology, so $V= \bigcap_{i\in\NN} V_i$. Here, each $V_i$ is a
  \emph{periodic} point set with well-defined FB coefficients.  The
  latter converge as $n\to\infty$, and the claimed formula (including
  the correct density) follows from a standard inclusion-exclusion
  argument that is implicit in \cite{BMP,BH}. This establishes the
  limit-periodic structure of $V$ and how it can be viewed as the
  limit of a sequence of lattice-periodic point sets.
\end{proof}

\begin{remark}\label{rem:other}
  Theorem~\ref{thm:vp-spec} can also be derived from explicit
  convergence arguments, similar to the original proofs in
  \cite{BMP}. This way, one sees that the FB coefficients, which are
  volume-averaged exponential sums, exist and emerge from an average
  over centred balls of increasing radius, thus aligning nicely with
  the way how natural patch frequencies (and hence also the Mirsky
  measure) are defined. This shows that and how using the same
  averaging process for all quantities is relevant.
   
  Alternatively, via the approach of \cite{RS}, one can employ the
  nature of $V$ as a weak model set of maximal density \cite{BHS} in
  conjunction with the uniform distribution \cite{Moody} of the
  $\star$-image of $V$ in the window within $H$ from
  \eqref{eq:vis-cps}. This allows to calculate $a^{\pa}_{V} (y)$ via the
  Fourier transform of $\mathbf{1}^{\pa}_{W}$, which reflects the ergodic
  properties of $V$ as represented in internal space.  \exend
\end{remark} 

Two comments are in order. First, in this representation, all
eigenfunctions take only real values on $V\!$. This simply reflects
the mirror symmetry of $V\!$, which turns the FB coefficients into a
volume-averaged cosine sum. Then, the normalisation of the
eigenfunctions is not the standard (unimodular) one, but very natural
from the harmonic analysis point of view. In fact, the diffraction
measure \cite{BMP} of $V$ is the positive, pure-point measure
\[
    \widehat{\gamma} \, = \sum_{y\in L^{\circledast}} 
    \lvert a^{\pa}_{V} (y) \rvert^2 \, \delta^{\pa}_y \ts ,
\]
which gives the intensities of the peaks as the squares (of the
absolute values) of the FB coefficients, which is also known as the
phase consistency of the system \cite{LS}, while the supporting set of
$\ts\widehat{\gamma}\ts$ is the FB spectrum, which agrees with the
dynamical point spectrum in our case,\footnote{More generally, the
  dynamical point spectrum is the smallest group that contains the FB
  spectrum.} where we do not take the closure as otherwise done in
functional analysis.

Let us also take a brief look at the related MTDS
$(\YY^{\pa}_{\nts V},\RR^d,\mu^{\pa}_{\max})$, where
$\mu^{\pa}_{\max}$ is the measure of maximal entropy. It reflects the
hereditary nature of $\YY^{\pa}_{\nts V}$, and is unique. It comes
from the product of $\mu_{_\mathrm{M}}$ with the Bernoulli measure
induced by the fair coin toss \cite{Peck,KulagaLem,KLW}.  The new
measure emerges from considering the product system
$\XX^{\pa}_{V} \times \{0,1\}^{\ZZ^d}$ with $\mu_{_\mathrm{M}}$ on the
first and the Bernoulli measure on the second factor. Then, since
$\XX^{\pa}_{V}$ is hereditary, pointwise multiplication is well
defined and maps any pair $(x,z)$ to $xz$, which is always in
$\XX^{\pa}_{V}$.

This new MTDS has mixed spectrum, as a result of \cite{BLR,HR}, with a
pure-point part that is closely related to the pure-point spectrum
discussed above and an absolutely continuous part that corresponds to
countably many copies of Lebesgue measure.  Let us consider the
pure-point part constructively, which is possible because $V$ can be
turned into a generic element of $\ts\YY^{\pa}_{\nts V}$ relative to
$\mu^{\pa}_{\max}$ by a standard \emph{Bernoulli thinning}. This
refers to knocking out each point $x\in V$ randomly and independently
with probability $\frac{1}{2}$, and almost every realisation of this
process will lead to a generic element for $\mu^{\pa}_{\max}$.
Clearly, this Bernoulli thinning also works for any other
$\mu_{_\mathrm{M}}$-generic element of $\YY_V$, not just for
$V\!$. Consequently, the set of elements of $\YY^{\pa}_{V}$ we reach
this way has full measure with respect to $\mu_{\max}$.

More generally, let $p\in (0,1)$ and let
$(\xi^{\pa}_{x})^{\pa}_{x\in V}$ be a family of i.i.d.~random
variables with values in $\{ 0,1 \}$ and $\PP (\xi^{\pa}_{x} =1) =
p$. Now, let the set
$\widetilde{V} = \{ x \in V \! : \xi^{\pa}_{x} = 1 \}$ be a
realisation of the corresponding thinning, which is almost surely
generic for the pushforward of the product measure of
$\mu_{_\mathrm{M}}$ with the Bernoulli measure for $p$, where
$p=\frac{1}{2}$ corresponds to $\mu^{\pa}_{\max}$. What is more, such
a realisation almost surely has (natural) density
$\dens \bigl(\widetilde{V}\bigr) =p\, \dens (V) $, both claims
following by an application of the \emph{strong law of large numbers}
(SLLN); see the Appendix for further details.  It is now possible to
determine the FB coefficients of $\widetilde{V}\!$, where we get
\begin{equation}\label{eq:FB-def}
    a_{\widetilde{V}} (y) \, = \lim_{r\to\infty}
        \myfrac{1}{\vol ( B_{r} (0) )}
        \sum_{x\in \widetilde{V}_{r}}
        \ee^{-2\pi\ii xy} \, =   \lim_{r\to\infty}
        \myfrac{1}{\vol ( B_{r} (0) )}
        \sum_{x\in V_{r}}
        \xi^{\pa}_{x} \, \ee^{-2\pi\ii xy} .
\end{equation}

The complex random variables
$Z^{\pa}_{x} \defeq \xi^{\pa}_{x} \ee^{-2\pi\ii xy}$ are independent,
but no longer identically distributed. With values in
$\{0 \} \nts \cup \SSS^{1}$, they clearly have bounded
variance. Indeed, one has
$\EE (Z^{\pa}_x) = \EE(\xi^{\pa}_{x}) \ts \ee^{-2\pi\ii xy} = p \,
\ee^{-2\pi\ii xy}$ and
\[
    \mathbb{V} (Z^{\pa}_{x}) \, = \, 
       \EE \bigl( \lvert Z_x - \EE (Z_x) \rvert^2 \bigr)  \, = \,
       \EE \bigl( \lvert Z_x \rvert^2\bigr) - \big\lvert\EE (Z_x)
       \big\rvert^2  \, = \, p \ts (1-p) \ts .
\]
Consequently, we can apply Kolmogorov's version of the SLLN, compare
\cite[Thm.~14.5]{Bauer}, so we may replace each term
$\xi^{\pa}_{x} \ee^{-2\pi\ii xy}$ by its expectation, and almost surely
still get the same limit. For any fixed $y\in\RR^d$, this almost
surely gives
\begin{equation}\label{eq:thinned-functions}
   a_{\widetilde{V}} (y) \, = \, p \cdot \nts a^{\pa}_{V} (y) \ts .
\end{equation}
Since $V$ can be replaced by any $\mu_{_\mathrm{M}}$-generic element,
this procedure works for a subset of the hull that has full measure
for $\mu^{\pa}_{\max}$.  As explained earlier, a non-zero
$a_{\widetilde{V}} (y)$ induces an eigenfunction, which can happen for
at most countably many $y\in\RR^d$. We thus see that, almost surely,
Eq.~\eqref{eq:thinned-functions} holds for all $y\in L^{\circledast}$
together with $a_{\widetilde{V}} (y) =0$ for all other $y$. This is to
say that the point spectrum of
$(\YY^{\pa}_{\nts V}, \RR^d, \mu_{_\mathrm{M}})$ agrees with that of
$(\YY^{\pa}_{\nts V}, \RR^d, \mu^{\pa}_{\max})$, with the
eigenfunctions emerging from the FB coefficients in both cases (the
prefactor $p$ only changes the normalisation). The difference is that
these eigenfunctions now only span a subspace of
$L^2 (\YY^{\pa}_{V}, \mu^{\pa}_{\max})$.  Its complement corresponds
to the absolutely continuous part of the spectrum, with countably many
copies of Lebesgue measure as spectral measures. The above
considerations can be summarised as follows.

\begin{coro}\label{coro:mixed}
  The MTDS\/ $(\YY^{\pa}_{\nts V}, \RR^d, \mu^{\pa}_{\max})$ has mixed
  spectrum, with a pure-point and an absolutely continuous part. The
  former, in additive notation, is again given by\/ $L^{\circledast}$
  from Remark~$\ref{rem:dual-alternative}$, with a spectral measure\/
  $\delta_y$ for every\/ $y\in L^{\circledast}$.
  
  As in Theorem~$\ref{thm:vp-spec}$, the corresponding eigenfunctions
  are induced by the FB coefficients on a subset of\/ $\YY_V$ of full
  measure, but only span part of the Hilbert space\/
  $L^{2} ( \YY^{\pa}_{\nts V}, \mu^{\pa}_{\max})$.  They are augmented
  by countably many copies of Lebesgue measure as spectral measures
  for the absolutely continuous part.  \qed
\end{coro}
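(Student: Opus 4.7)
The plan is to consolidate the arguments developed in the paragraphs preceding the statement into three clean steps: first, reduce to the structural fact that the measure of maximal entropy on a hereditary $\cB$-free hull is a joining of the Mirsky measure with a fair Bernoulli factor; second, transfer the FB eigenfunctions from $\mu_{_\mathrm{M}}$ to $\mu^{\pa}_{\max}$ via the explicit Bernoulli-thinning construction already given; and third, invoke the mixed-spectrum classification of \cite{BLR,HR} to match the orthogonal complement of the FB eigenspace with countable Lebesgue spectrum.

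For the first two steps, I would recall that, since $\XX^{\pa}_{V}$ is hereditary, pointwise multiplication turns any pair $(x,z)\in\XX^{\pa}_{V}\times\{0,1\}^{\ZZ^d}$ into an element of $\XX^{\pa}_{V}$ and pushes the product of $\mu_{_\mathrm{M}}$ with the symmetric Bernoulli measure forward to $\mu^{\pa}_{\max}$. Thus any $\mu_{_\mathrm{M}}$-generic element, in particular $V$ itself, can be thinned by deleting each point independently with probability $\tfrac{1}{2}$, giving a set $\widetilde{V}$ that is almost surely generic for $\mu^{\pa}_{\max}$. Then the computation in \eqref{eq:FB-def}--\eqref{eq:thinned-functions} applies: Kolmogorov's SLLN yields, for every \emph{fixed} $y\in\RR^d$, the almost sure identity $a^{\pa}_{\widetilde{V}}(y)=\tfrac{1}{2}\ts a^{\pa}_{V}(y)$. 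Because $L^{\circledast}$ is countable, a single exceptional null set suffices, so almost surely $a^{\pa}_{\widetilde{V}}$ is supported on $L^{\circledast}$ and equals $\tfrac{1}{2}\ts a^{\pa}_{V}$ there. As in Theorem~\ref{thm:vp-spec}, translating $\widetilde{V}$ by $t\in\RR^d$ multiplies $a^{\pa}_{\widetilde{V}}(y)$ by $\ee^{-2\pi\ii\ts ty}$, so every $y\in L^{\circledast}$ yields an eigenfunction along the orbit, with phase consistency inherited from $V$. Replacing $V$ by an arbitrary $\mu_{_\mathrm{M}}$-generic starting set and using Fubini extends these eigenfunctions to a $\mu^{\pa}_{\max}$-full subset of $\YY^{\pa}_{\nts V}$.

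The main obstacle is the third step, namely showing that the FB eigenfunctions span \emph{exactly} the pure-point subspace of $L^{2}(\YY^{\pa}_{\nts V},\mu^{\pa}_{\max})$ and that the orthogonal complement carries countable Lebesgue spectrum. This is not elementary, and I would simply quote the mixed-spectrum classification already cited: the Koopman representation of $\RR^d$ on $L^{2}(\YY^{\pa}_{\nts V},\mu^{\pa}_{\max})$ splits as a pure-point part arising from the Mirsky factor, realised concretely here by the FB eigenfunctions with one-dimensional eigenspaces giving the spectral measures $\delta^{\pa}_{y}$ for $y\in L^{\circledast}$, plus countably many copies of Lebesgue measure stemming from the Bernoulli factor. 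Combining these two ingredients yields the claim, including the fact that the constant prefactor $\tfrac{1}{2}$ affects only the normalisation of the eigenfunctions and not the underlying group structure of the point spectrum.
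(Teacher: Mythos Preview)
Your proposal is correct and follows essentially the same approach as the paper: the corollary is stated with a \qed precisely because it summarises the preceding discussion, which proceeds via the joining/product structure of $\mu^{\pa}_{\max}$, the Bernoulli-thinning construction together with Kolmogorov's SLLN to transfer the FB coefficients, and the citation of \cite{BLR,HR} for the mixed-spectrum decomposition with countable Lebesgue spectrum. Your three-step consolidation mirrors this exactly.
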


To go one step further, we can employ the Bernoullisation method from
\cite[Sec.~11.2]{TAO} to also calculate the almost sure diffraction
measure of $( \YY^{\pa}_{\nts V}, \mu^{\pa}_{\max})$, which gives
\[
    \widehat{\gamma}^{\pa}_{\max} \, = \, \myfrac{1}{4} \widehat{\gamma}
    + \myfrac{1}{4} \dens (V) \lambda_{_\mathrm{L}} \ts ,
\]
where $\lambda_{_\mathrm{L}}$ denotes Lebesgue measure.  This,
together with \eqref{eq:thinned-functions} shows constructively that
the new intensities of the peaks in the pure-point part are given by
the absolute value squares of the new FB coefficients. Thus, the
pure-point part still satisfies phase consistency.
  
Two comments are in order here. First, it is clear why working with
the Mirsky measure is both natural and sufficient, which is what we
will do throughout the paper. Second, in our special situation, there
is a deeper picture behind this, which revolves around uniformly
distributed sequences. This will be explained in more detail in the
Appendix.

\section{Power-free integers and some of their 
generalisations}\label{sec:power-free}

The simplest one-dimensional analogue of the visible lattice points is
the set of square-free integers. This set contains all integers that
are not divisible by the square of any (rational) prime, and thus
constitutes a special case of the class of $\cB$-free integers, with
$\cB = \{ p^2 : p \in \cP \}$. Square-free integers have been studied
for a long time, see \cite{BMP} and references therein for early
results and \cite{TAO,Peck,RS} for more recent treatments, while we
refer to \cite{Abda,KKL,DKKL,KRS,CV} and references therein for recent
results on $\cB$-free systems, and \cite{Mentzen,BaMan} for some
results on the symmetries.

A simple extension is given by
\[
  V_{\bs{\kappa}} \, \defeq \, \{ x \in \ZZ : x \text{ not divisible by }
  p^{\kappa_p} \text{ for any } p \in \cP \} \ts ,
\]
where $\bs{\kappa} = (\kappa_p)^{\pa}_{p\in\cP}$ is a sequence of natural
numbers with some constraints. In particular, $\kappa_p =1$ is only
allowed for finitely many $p\in\cP$, or in such a way that
\begin{equation}\label{eq:sum-restriction}
   \sum_{p \, : \ts\ts \kappa_p =1}p^{-1} \, < \, \infty \ts .    
\end{equation}
Further, we can also admit $\kappa_p = \infty$, which just means that
those primes are not in $\cB$ and thus give no condition. When only
finitely many $\kappa_p$ are finite, $V_{\bs{\kappa}}$ is a periodic
subset of $\ZZ$. In general, when \eqref{eq:sum-restriction} holds,
the natural density (via averaging over intervals $[-n,n]$ as
$n\to\infty$) of $V_{\bs{\kappa}}$ exists and is given by the
generally infinite, but absolutely convergent, product
\[
  \dens (V_{\bs{\kappa}}) \, = \prod_{p\in\cP} \bigl( 1-p^{-\kappa_p}
  \bigr) \, = \nts \prod_{p\in\cP'} \bigl( 1 - p^{-\kappa_p}\bigr) ,
\]
with $\cP' \defeq \{ p \in \cP : \kappa_p < \infty \}$, as follows
from a simple inclusion-exclusion argument via the periodic covering
sets that are obtained from taking the first $N$ primes into account
and then letting $N\to\infty$, in conjunction with an appropriate tail
estimate; see Section~\ref{sec:extend} below. When 
$\kappa_p = \kappa$ with $\kappa \geqslant 2$ for all
$p\in\cP$, we simply write $V_{\kappa}$, and one gets
$\dens (V_{\kappa}) = 1/\zeta (\kappa)$.

For the general case, a CPS for $V_{\bs{\kappa}}$ can be constructed as
\begin{equation}\label{eq:int-cps}
\renewcommand{\arraystretch}{1.2}\begin{array}{r@{}ccccc@{}l}
   & \RR & \xleftarrow{\,\;\;\pi\;\;\,} & \RR \times H & 
        \xrightarrow{\;\pi^{\pa}_{\mathrm{int}\;}} & H & \\
   & \cup & & \cup & & \cup & \hspace*{-1ex} 
   \raisebox{1pt}{\text{\footnotesize dense}} \\
   & \ZZ & \xleftarrow{\; 1-1 \;} & \cL & 
        \xrightarrow{\; \hphantom{1-1} \;} & \iota (\ZZ) & \\
   & \| & & & & \| & \\
   & L & \multicolumn{3}{c}{\xrightarrow{\qquad\qquad\;\,\star
       \,\;\qquad\qquad}} 
       &  {L_{}}^{\star\nts}  & \\
\end{array}\renewcommand{\arraystretch}{1}
\end{equation}
where $H=\bigotimes_{p\in\cP'} \ZZ/p^{\kappa_p} \ZZ$ is a compact
Abelian group and $\pi$ and $\pi^{\pa}_{\mathrm{int}}$ are the canonical
projections to $\RR$ and $H$, respectively.  Further, $\cL$ is a
lattice in $\RR{\ts\times\ts} H$, where
$\iota : \ZZ \longrightarrow H$ is the mapping defined by
$n \mapsto n^{\star} \defeq ( n \bmod p^{\kappa_p} )^{\pa}_{p\in \cP'}$
and $\cL$ is the diagonal embedding of $\ZZ$, so
$\cL = \{ (n, n^{\star}) : n \in \ZZ \}$. In other words, this CPS is
a simple variant of \eqref{eq:vis-cps}, with the adjusted triple
$(\RR, H, \cL)$. Then, we get
\[
  V_{\bs{\kappa}} \, = \, \{ n \in \ZZ : n^{\star} \in W \}
  \qquad \text{with } \; W \, = \Bigtimes\displaylimits_{p\in \cP'}
  \bigl(  ( \ZZ/p^{\kappa_p} \ZZ ) \setminus \{ 0 \} \bigr) .
\]
If $H$ is equipped with its normalised Haar measure
$\nh$, the \emph{window} $W$ has measure
\[
    \nh (W) \, = \prod_{p\in\cP'} 
       \frac{p^{\kappa_p} - 1}{p^{\kappa_p}}
    \, = \, \dens (V_{\bs{\kappa}}) \ts ,
\]
which shows the analogy to our guiding example. 

By a minor extension of the arguments used in \cite{BMP} for the case
of constant $\kappa_p$, one can derive the diffraction measure of the
Dirac comb
$\delta^{\pa}_{V_{\bs{\kappa}}} = \sum_{x\in V_{\bs{\kappa}}} \delta_x$
as
\[
  \widehat{\gamma^{\pa}_{V_{\bs{\kappa}}}} \, = \sum_{k\in L^{\circledast}}
  \lvert a(k) \rvert^{2} \ts \delta^{\pa}_{k}
\]
with the FB coefficients $a(k)$. The supporting set, the FB spectrum,
is
\begin{equation}\label{eq:kappa-spec}
  L^{\circledast} \, = \, \Bigl\{ k \in \QQ : \begin{array}{c} 
    \den (k) \text{ is $(\kappa_p {+} 1)$-free for all } p \in \cP' \\
    \text{and not divisible by any } q \in \cP \nts\setminus\nts \cP' 
    \end{array} \Bigr\}  \, = \, \sum_{p\in\cP'} \, p^{-\kappa_p} \ZZ \ts ,
\end{equation}
which is a subgroup of $\ts\QQ$, hence also torsion-free. It is again
calculated by the method from
Remark~\ref{rem:dual-alternative}. Further, analogously to
Theorem~\ref{thm:vp-spec}, the FB coefficients for
$k\in L^{\circledast}$ are given by
\begin{equation}\label{eq:FB-kappa-free}
    a(k) \, = \dens (V_{\bs{\kappa}}) \prod_{p \mid \den (k)}
    \frac{1}{1-p^{\kappa_p}} \ts ,
\end{equation}
while they satisfy $a (k)=0$ for all other $k$. Note that $a (k)$ is
always real, which is clear from the reflection symmetry
$V_{\bs{\kappa}} = - V_{\bs{\kappa}}$. The group $L^{\circledast}$ is
also the dynamical spectrum (in additive notation) of the MTDS
$(\YY_{\bs{\kappa}}, \RR, \mu_{_\mathrm{M}})$ with $\YY_{\bs{\kappa}}$ 
being the continuous hull of $V_{\bs{\kappa}}$ and $\mu_{_\mathrm{M}}$ 
the Mirsky measure.  Restricting to
$\ZZ$-action, one gets the spectrum $L^{\circledast}\nts \cap \TT$
with $\TT $ as above, in complete analogy to
Remark~\ref{rem:dual-discrete}.  Under addition modulo $1$, this is
the group
\[
     L^{\circledast}\! / \ZZ \, = \, \bigoplus_{p\in \cP'}
     p^{-\kappa_{p}} \ZZ / \ZZ \ts , \quad\text{with}\quad
     p^{-\kappa_{p}} \ZZ / \ZZ \simeq C_{p^{\kappa_p}} \ts .
\]
Conversely, when starting from this
discrete group action, the extension to the $\RR$-action emerges via a
standard suspension with a constant roof function; compare
\cite[Ch.~3.4]{VO}.

Again, the dynamical spectrum can also be extracted from the dual CPS,
which is a modification of \eqref{eq:vis-cps-dual}, now with $d=1$ and
adjusted entries for $H$ and $\cL^0 \nts$. The latter is
\[
   \cL^{0} \, = \, \{ (y,y^{\star}) : y \in L^{\circledast} \}  
\]
with $ \star \colon L^{\circledast} \xrightarrow{\quad} \widehat{H} =
\bigoplus_{p\in\cP'} \ZZ/p^{\kappa_{p}}\ZZ$
being defined as follows. First, consider the terms in
$\prod_{p \mid \den (u)} p^{\kappa_p}$ and then set
\[
     u^{\star} \, = \,  \bigl( - \lcm (\den (u), p^{\kappa_p}) \ts u
        \bmod p^{\kappa_p} \bigr)_{p\in\cP'} \ts ,
\]
which is a slight modification of our $\star$-map for the visible
lattice points. Once again, the entry for any $p \nmid \den (u)$
vanishes, so that $\ZZ$ is the kernel of the group homomorphism
$\star$.  All of this follows from calculations analogous to those of
our guiding example, which we leave to the interested reader; see also
\cite{RS}. Note that this formulation is also correct when $\cP'$ is a
finite set, in which case $V_{\bs{\kappa}}$ is periodic. The resulting
expressions can then also be seen via a simple inclusion-exclusion
argument, which is the basis of the argument from
Remark~\ref{rem:dual-alternative}.

Let us summarise the above derivations and recollections as follows. 

\begin{theorem}\label{thm:k-free-integers}
  Let\/ $\bs{\kappa} = (\kappa^{\pa}_{p})^{\pa}_{p\in\cP}$ with\/
  $\kappa^{\pa}_{p} \in \NN \cup \{ \infty \}$ be such that
  Eq.~\eqref{eq:sum-restriction} holds, and let\/ $V_{\bs{\kappa}}$ be
  the corresponding set of\/ $\bs{\kappa}$-free integers. Then, the
  diffraction measure of\/ $V_{\bs{\kappa}}$ is pure point, and given
  by
\[ 
     \widehat{\gamma^{\pa}_{\bs{\kappa}}} \, = \sum_{k \ts \in L^{\circledast}}
     \lvert a(k) \rvert^2 \ts \delta^{\pa}_{k}
\]   
with the FB spectrum\/ $L^{\circledast}$ from\/ \eqref{eq:kappa-spec}
and the FB coefficients\/ $a(k)$ from\/ \eqref{eq:FB-kappa-free}.
   
Further, the induced dynamical system\/
$(\XX_{\bs{\kappa}}, \ZZ, \mu_{_\mathrm{M} } )$ has pure-point
spectrum with trivial topological point spectrum. In additive
notation, the spectrum is given by the Abelian group\/
$L^{\circledast} \! / \ZZ = \bigoplus_{p\in\cP'} \ts p^{-\kappa_p}\ZZ
/ \ZZ $, with\/ $\cP' = \{ p\in\cP : \kappa_{p} < \infty \}$.  \qed
\end{theorem}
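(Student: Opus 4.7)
The plan is to mirror the treatment of the visible lattice points in Section~\ref{sec:guide}, promoting $V_{\bs{\kappa}}$ to a weak model set of maximal density and then deducing everything from the general theory together with the explicit FB computation via periodic approximants.

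First I would verify that the diagram in \eqref{eq:int-cps} really is a CPS in the sense of the paper: $\cL = \{ (n,n^{\star}) : n\in\ZZ \}$ is discrete in $\RR\times H$, the projection $\pi|^{\pa}_{\cL}$ is injective (obvious), and $\pi^{\pa}_{\mathrm{int}}(\cL)$ is dense in $H$, which follows prime by prime from the CRT. Moreover, $\cL$ has co-volume one in $\RR \times H$ under the chosen normalisation $\nh(H)=1$, so it is a unimodular lattice. With the window $W = \bigtimes_{p\in\cP'} \bigl((\ZZ/p^{\kappa_p}\ZZ)\setminus\{0\}\bigr)$, the identity $V_{\bs{\kappa}} = \{ n\in\ZZ : n^{\star} \in W \}$ is immediate from the definition of $V_{\bs{\kappa}}$. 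The key numerical input is $\nh(W) = \prod_{p\in\cP'} (1 - p^{-\kappa_p}) = \dens(V_{\bs{\kappa}})$. For the density on the left I would use the periodic cover
\[
   V_{\bs{\kappa},N} \, \defeq \, \ZZ \setminus \bigcup_{p\in\cP', \, p\leqslant N} p^{\kappa_p}\ZZ \ts ,
\]
whose density is given by inclusion-exclusion and equals the truncated product. A tail estimate controlled by \eqref{eq:sum-restriction} (together with $\sum_p p^{-2}<\infty$ handling the primes with $\kappa_p\geqslant 2$) shows that $V_{\bs{\kappa}} \subseteq V_{\bs{\kappa},N}$ and the symmetric difference has density tending to zero as $N\to\infty$, so $\dens(V_{\bs{\kappa}})$ exists and equals $\nh(W)$. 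This identifies $V_{\bs{\kappa}}$ as a weak model set of maximal density.

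Next, by the Baake--Huck--Strungaru theorem \cite{BHS}, any weak model set of maximal density is pure-point diffractive and, under its natural Mirsky measure, the resulting MTDS has pure-point dynamical spectrum; together with the equivalence of diffraction and dynamical spectra \cite{LMS,BL}, this gives the pure-point assertion for the MTDS $(\XX_{\bs{\kappa}}, \ZZ, \mu_{_\mathrm{M}})$ (and for its suspension). To compute $L^{\circledast}$ itself, I would follow the shortcut in Remark~\ref{rem:dual-alternative}: each periodic approximant $V_{\bs{\kappa},N}$ is a union of cosets of $\bigl(\bigcap_{p\leqslant N} p^{\kappa_p}\ZZ\bigr) = \lcm\text{-lattice}$, and hence diffracts on the dual lattice $\sum_{p\in\cP',\,p\leqslant N} p^{-\kappa_p}\ZZ$. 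Letting $N\to\infty$ and taking unions yields exactly $L^{\circledast} = \sum_{p\in\cP'} p^{-\kappa_p}\ZZ$, which is visibly the group of rationals described in \eqref{eq:kappa-spec}. Equivalently one can parametrise the annihilator $\cL^0$ as in \eqref{eq:annihilation}--\eqref{eq:spec} with the modulus $p$ replaced by $p^{\kappa_p}$; this is where the $\star$-map on $L^{\circledast}$ with values in $\bigoplus_{p\in\cP'} \ZZ/p^{\kappa_p}\ZZ$ arises, and it has kernel $\ZZ$ by the same linearity/CRT argument as in the guiding example.

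The FB coefficients in \eqref{eq:FB-kappa-free} are then obtained by a standard inclusion-exclusion on the approximants $V_{\bs{\kappa},N}$, exactly as in the sketch of Theorem~\ref{thm:vp-spec}: for each $V_{\bs{\kappa},N}$ the FB coefficient at $k$ is a finite product $\prod_{p\leqslant N,\, p\in\cP'}\bigl(1 - p^{-\kappa_p} \mathbf{1}_{p\mid\den(k)} \cdot \tfrac{p^{\kappa_p}}{1}\bigr)$-style expression, which after simplification gives $\dens(V_{\bs{\kappa},N}) \prod_{p\mid\den(k)} \frac{1}{1-p^{\kappa_p}}$; passing to the limit uses the same Weierstrass-$M$ tail estimate. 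Finally, passing from the $\RR$-action to the $\ZZ$-action is the suspension argument in Remark~\ref{rem:dual-discrete}: the spectrum becomes $L^{\circledast}/\ZZ$, and since the representatives in $p^{-\kappa_p}\ZZ/\ZZ \cong C_{p^{\kappa_p}}$ only interact via the CRT, the sum is actually direct, giving the stated group. Triviality of the topological point spectrum follows verbatim as in Theorem~\ref{thm:vis-spec}: any non-trivial eigenfunction coming from an FB coefficient at $k\neq 0$ cannot be continuous because the sequence of periodic approximants has genuinely discontinuous limiting eigenvectors, while $V_{\bs{\kappa}}$ is itself limit-periodic and the approximants' eigenfunctions oscillate on arbitrarily fine scales.

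The step I expect to require the most care is the tail estimate that legitimates the passage from $V_{\bs{\kappa},N}$ to $V_{\bs{\kappa}}$, because condition \eqref{eq:sum-restriction} is tight: for the $\kappa_p=1$ primes only summability of $p^{-1}$ over that subset saves the argument, and one has to check both the density convergence and the pointwise convergence of the FB coefficients uniformly in $k$ on any fixed finite subset of $L^{\circledast}$. Once this is in place, all other pieces are direct analogues of the visible-lattice-points proof.
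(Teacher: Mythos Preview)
Your proposal is correct and matches the paper's approach: the theorem is stated there as a summary (note the \qed) of the preceding discussion, which proceeds exactly as you outline --- CPS setup, weak model set of maximal density via periodic approximants with a tail estimate from \eqref{eq:sum-restriction}, pure-point diffraction via \cite{BHS}, $L^{\circledast}$ via Remark~\ref{rem:dual-alternative}, and FB coefficients by inclusion-exclusion as in Theorem~\ref{thm:vp-spec}. One minor point: for the triviality of the topological point spectrum the paper simply cites \cite{BHS,Keller} (empty-interior window, hereditary hull containing $\varnothing$) rather than arguing heuristically via oscillation of approximants as you do.
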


At this point, our previous comments on $\mu_{_\mathrm{M}}$ versus
$\mu^{\pa}_{\max}$ apply in complete analogy. In particular, we could
repeat Corollary~\ref{coro:mixed} for this systems as well, which we
omit.

\begin{remark}
  With hindsight, we could have allowed for an analogous
  generalisation in the set of visible lattice points as well, by
  considering $\ZZ^d \setminus \bigcup_{p\in\cP} p^{\kappa_p} \ZZ^d$,
  where $\kappa_p \in \NN \cup \{\infty\}$ is now allowed for all $p$,
  with the obvious meaning for $\kappa_p=\infty$.  The entire analysis
  extends accordingly, without providing additional insight, wherefore
  we skip further details.  \exend
\end{remark}

\section{A more general setting with \texorpdfstring{$\cB$}{}-free
  lattice systems}\label{sec:extend}

Though we are particularly interested in the number-theoretic setting,
a more general approach would employ lattices. Indeed, consider
$\RR^d$, with some fixed norm $\| . \|$, and fix some lattice $\vG$ in
it. The choice $\vG = \ZZ^d$ will then correspond to the situation of
symbolic dynamics.  Now, select a collection of lattices
$\cB = \{ \vG_i : i \in \NN \}$ with the following properties.
\begin{itemize}\itemsep=2pt
\item[(B1)] One has $\vG_i \subset \vG$ with $1 < [\vG : \vG_i] 
    \leqslant [\vG : \vG_{i+1}]$ for all $i\in \NN$.
\item[(B2)] One has $(\vG_1 \cap \ldots \cap \vG_m) 
    + \vG_{m+1} = \vG$ for all $m \in \NN $.
\item[(B3)] One has $\sum_{i\in\NN} \lambda (\vG_i)^{-d} < \infty$,
    where $\lambda (\vG) = \min_{x\in \vG\ts\setminus \nts\{ 0 \}} \| x \|$.
\end{itemize}
Note that (B2) implies $\vG_i + \vG_j = \vG$ for all $i \ne j$. The latter
is equivalent to (B2) when $d=1$, but not for $d>1$. There, (B2) is a
stronger condition, which is equivalent to pairwise coprimeness in
conjunction with the condition that the natural projection
of $\vG$ into $H = \bigotimes_{i\in\NN} \vG / \vG_i$ is dense. The 
latter is true if and only if $\vG$ projects surjectively to 
$\bigotimes_{i=1}^{m} \vG/\vG_i$ for all $m\in \NN$. In fact, 
(B2) ensures the lattice version of the CRT, which 
we need in our density arguments.

Now, we consider $V = \vG \setminus \bigcup_{i\in\NN} \vG_i$, which
generalises our previous examples. Also all later ones will be of this
type. These conditions are one possible generalisation of Erd\H{o}s
$\cB$-free systems \cite{KKL,KLW} from one to higher dimensions
\cite{BBHLN}. We note that (B3) is sufficient for what we need, but
not necessary for $d>1$; slightly weaker conditions will still work,
but are not pursued here.  Let us mention here that the above setting
can be extended to cover more general sieves. For instance, one could
be interested in removing not just $\vG_i$, but also some cosets of it
in $\vG$, which we leave for future work.

Set $V_n = \vG \setminus \bigcup_{i=1}^{n} \vG_i$, which is still a
periodic point set, with $\bigcap_{i=1}^{n} \vG_i$ as its lattice of
periods. Note that $V^{\pa}_{0} \defeq \vG$ is a limiting case, with
$\vG$ as its lattice of periods. The density of any $V_n$ exists
uniformly.  With $[n]\defeq \{ 1, \ldots , n \}$ for $n\in\NN$ and
$[0]\defeq \varnothing$, one has
\[
   \dens (V_n) \, = \sum_{I\subseteq [n]} (-1)^{\lvert I \rvert} 
        \ts \dens (\vG^{\pa}_I) 
\]
by a standard inclusion-exclusion argument, where
$\vG^{\pa}_I = \bigcap_{i\in I} \vG_i$ with
$\vG^{\pa}_{\varnothing} \defeq \vG$ and $\lvert I \rvert$ denotes the
cardinality of the finite set $I$. Due to condition (B2), we simply
have
\[
    \dens (\vG^{\pa}_{I}) \, = \, \dens (\vG) 
        \prod_{i\in I} [\vG : \vG_i]^{-1} \ts ,
\]     
which then gives, with $N_i \defeq [\vG : \vG_i]$,
\begin{equation}\label{eq:dens-Vn}
  \dens (V_n) \, = \dens (\vG) \prod_{i=1}^{n} 
  \Bigl( 1 - \myfrac{1}{N_{i}} \Bigr) .
\end{equation}
As $n\to\infty$, the product converges as a consequence of (B3),
because $N_i \geqslant \lambda (\vG_{i})^d$.

In contrast, the density of $V$ will generally not be uniform, and is
more complicated.  Since $V\subset V_n$ for every $n\in \NN$, we get
$\overline{\dens} (V) \leqslant \dens (V_n)$ for all $n\in\NN$, where
the upper density is defined with respect to any given
 F{\o}lner averaging sequence.  This implies
\[
    \overline{\dens} (V) \, \leqslant \, \inf_{n\in\NN} \dens (V_n)
    \, =  \lim_{n\to\infty} \dens (V_n) \, =  \, \dens (\vG)
    \prod_{i=1}^{\infty} \Bigl( 1 - \myfrac{1}{N_i } \Bigr) ,
\]
where the first equality follows from $V_n \supset V_{n+1}$ for
all $n\in\NN$, and the second from \eqref{eq:dens-Vn}.

In general, the corresponding lower density could differ, but our
special conditions are chosen so that this does not happen for the
natural (tied) density with respect to centred balls. So, let
$B_r = B_r (0)$ be the closed ball of radius $r$ around $0$ and
consider $\cA = \{ B_r : r > 0 \}$. For any point set
$S\subset \RR^d$, we now use
\[
   \overline{\dens}_{\cA} (S) \, \defeq  \limsup_{r\to\infty} 
   \myfrac{\lvert S\cap B_r \rvert}{\vol (B_r)} 
   \quad \text{and} \quad \underline{\dens}_{\ts\cA} (S) \, \defeq 
   \liminf_{r\to\infty}  \myfrac{\lvert S\cap B_r \rvert}{\vol (B_r)} 
\]
and say that the density of $S$ with respect to $\cA$ exists if
$\overline{\dens}_{\cA} (S) = \underline{\dens}_{\ts\cA} (S)$. From
the above, we get
\[
  \overline{\dens}_{\cA} (V) \, \leqslant \, \overline{\dens}_{\cA}
  (V_n) \, = \, \dens_{\cA} (V_n) \, = \, \dens (V_n)
\]
for every $n\in\NN$, where $\dens (V_n)$ refers to the uniform density
of $V_n$ from above. Since $V\subset V_n$, we have the disjoint union
$V_n = V \ts\dot{\cup}\ts \vD_n$ with
$\vD_n = V_n \!\setminus\nts\nts V\nts$.

Now, let $(A_i)^{\pa}_{i\in\NN}$ with $A_i = B_{r_i}$ be a sequence of
growing balls (with unbounded radii) such that
$\underline{\dens}_{\cA} (V) = \lim_{i\to\infty} \frac{\lvert V \cap
  A_i \rvert}{\vol (A_i)}$, which must exist by standard arguments.
Observing that
$V_n \cap A_i = (V \cap A_i) \, \dot{\cup} \, (\vD_n \cap A_i)$, we
get the estimate
\[
  \dens (V_n) \, = \ts \lim_{i\to\infty} \myfrac{\lvert V_n \cap A_i
    \rvert} {\vol (A_i)} \, \leqslant \, \underline{\dens}_{\ts \cA}
  (V) + \overline{\dens}_{\cA} (\vD_n)
\]
and hence
$\underline{\dens}_{\ts \cA} (V) \geqslant \dens (V_n) -
\overline{\dens}_{\cA} (\vD_n)$ for all $n\in\NN$. Now, the upper
density of $\vD_n$ gets arbitrarily small with growing $n$, as a
consequence of condition (B3), so lower and upper density of $V$ with
respect to $\cA$ must agree, and we have
\[
     \dens_{\cA} (V) \, = \lim_{n\to\infty} \dens (V_n) \ts .
\]
Invoking the results from \cite{BHS}, this has the following consequence.

\begin{prop}\label{prop:gen-latt}
  Let\/ $\vG\subset \RR^d$ be a fixed lattice, with dual\/ $\vG^{*}$, and 
  let\/ $\cB = \{ \vG_i : i\in\NN \}$ be a family of lattices in\/ $\RR^d$
  subject to conditions\/ \textnormal{(B1) -- (B3)}. Then,
  $V = \vG \setminus \bigcup_{i\in\NN} \vG_i$ is a weak model set of
  maximal natural density. As such, it has pure-point diffraction,
  with FB spectrum\/ $L^{\circledast} = \sum_{i\in\NN}
  \vG_{i}^{*}$, where  $L^{\circledast}\! / \vG^{*} \nts = \bigoplus_{i\in\NN}
  \vG^{*}_{i}/\vG^{*}$ is the induced subgroup of\/
  $\TT^d = \RR^d/\vG^{*}\!$.
\end{prop}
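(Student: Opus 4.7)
The plan is to place $V$ into the weak-model-set framework of \cite{BHS} via a suitable cut and project scheme, invoke the key diffraction theorem for weak model sets of maximal density, and then read off the FB spectrum using the dual construction along the lines of Remark~\ref{rem:dual-alternative}.

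I would first set $H = \bigtimes_{i \in \NN} \vG / \vG_i$ with its product topology and normalised Haar measure $\nh$ (the product of the uniform probability measures on the finite factors). The mapping $\iota \colon \vG \to H$, $x \mapsto (x \bmod \vG_i)_{i\in\NN}$, is a group homomorphism with dense image in $H$, as follows from the coprimality (B2) via CRT applied to arbitrary finite prefixes. Consequently, $\cL = \{(x, \iota(x)) : x \in \vG\}$ is a lattice in $\RR^d \times H$ of covolume $\dens(\vG)^{-1}$. The window $W = \bigtimes_{i\in\NN}\bigl((\vG/\vG_i)\setminus\{0\}\bigr)$ is compact with empty interior, and $V = \{x \in \vG : \iota(x) \in W\}$ by construction, so $V$ is a weak model set in the CPS $(\RR^d, H, \cL)$.

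A direct Haar-measure calculation gives $\nh(W) = \prod_{i\in\NN}(1 - 1/N_i)$, with absolute convergence by (B3), since $N_i \geqslant \lambda(\vG_i)^d$. Combined with the identity $\dens_{\cA}(V) = \dens(\vG) \prod_{i\in\NN}(1 - 1/N_i)$ already established in the paragraphs preceding the proposition, this shows $\dens_{\cA}(V) = \dens(\vG) \cdot \nh(W)$, which is the defining equality for maximal density. The main theorem of \cite{BHS} then delivers pure-point diffraction (and, as a bonus, the fact that $V$ is generic for the Mirsky measure). For the FB spectrum I would follow Remark~\ref{rem:dual-alternative}: for every finite $F \subset \NN$, the truncation $V_F = \vG \setminus \bigcup_{i \in F} \vG_i$ is lattice-periodic with period lattice $\vG_F = \bigcap_{i \in F}\vG_i$; iterated use of (B2) yields the dual identity $\vG_F^{*} = \sum_{i \in F}\vG_i^*$, and the FB spectrum of $V_F$ is precisely this dual lattice, each element carrying a non-trivial coefficient by a standard inclusion-exclusion. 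Since $V_F \to V$ in the local topology as $F$ exhausts $\NN$, a tail estimate based on (B3) --- a Weierstrass $M$-test applied to the Euler-type product of FB coefficients, exactly as in the proof of Theorem~\ref{thm:vp-spec} --- confirms that the FB coefficients of $V_F$ converge to those of $V$ and that no additional frequency emerges, so $L^{\circledast} = \sum_{i\in\NN} \vG_i^*$.

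The direct-sum decomposition $L^{\circledast}/\vG^{*} = \bigoplus_{i\in\NN}\vG_i^*/\vG^*$ then follows from dualising (B2): $\vG_i + \vG_j = \vG$ for $i\ne j$ gives $\vG_i^{*} \cap \vG_j^{*} = \vG^{*}$, and pairwise triviality of the quotient subgroups $\vG_i^*/\vG^*$ suffices to upgrade the sum to a direct sum, by a routine induction on the number of summands. The main obstacle, as I see it, is the limit step for the FB coefficients --- making rigorous that $a_{V_F}(y) \to a_V(y)$ for every $y$ and that no support appears outside $\sum_{i} \vG_i^*$ --- since the windows here have empty interior and therefore lie outside the classical regular-model-set regime; this is precisely where condition (B3) plays its decisive role, whereas the remainder of the argument amounts to bookkeeping within the CPS.
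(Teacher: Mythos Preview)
Your proposal is correct and follows essentially the same route as the paper: construct the CPS $(\RR^d, H, \cL)$ with $H = \bigotimes_{i\in\NN}\vG/\vG_i$ and the diagonally embedded lattice, identify $V$ as a weak model set with window $W = \bigtimes_{i\in\NN}\bigl((\vG/\vG_i)\setminus\{0\}\bigr)$, verify maximal density via $\nh(W) = \prod_i(1-1/N_i)$ and the density calculation preceding the proposition, invoke \cite{BHS} for pure-point diffraction, and obtain $L^{\circledast}$ from the periodic approximants $V_F$ via Remark~\ref{rem:dual-alternative}. Your additional justification of the direct-sum decomposition (dualising (B2) to $\vG_i^*\cap\vG_j^* = \vG^*$) and your explicit flagging of the (B3)-based tail estimate as the delicate step are welcome elaborations, but do not depart from the paper's argument.
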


\begin{proof}
  The CPS is $(\RR^d, H, \cL)$ with the compact Abelian group
  $H = \bigotimes_{i\in\NN} \vG/\vG_i$ and the lattice $\cL$ that
  emerges as the diagonal embedding of $\vG$ into $\RR^d \times H$.
  Concretely, one has $\cL = \{ (t, t^{\star}) : t \in \vG \}$ with
  $t^{\star} = (t \bmod \vG_i)^{\pa}_{i\in\NN}$.
  
  Then, $V = \{ t \in \vG : t^{\star} \in W \}$ is a model set with
  the window
  $W = \bigtimes_{\! i\in\NN} \bigl( H_i \setminus \{0\} \bigr)$, with
  $H_i = \vG / \vG_{i}$.  Here, $V$ is a weak model set because $W$
  has no interior. Still, the volume of $W$ in the normalised Haar
  measure of $H$ is
\[
  \vol (W) \, = \, \prod_{i\in\NN} \Bigl( 1 - \myfrac{1}{N_{i} }\Bigr),
\]  
which is a converging product as a consequence of condition (B3).
Then, $\dens (\vG) \ts \vol (W)$ is the maximal density for the
corresponding weak model set.  Since this is the natural density of
$V$, it is maximal as claimed.
  
The pure-point nature is now a consequence of \cite{BHS}, and the FB
spectrum follows from another application of
Remark~\ref{rem:dual-alternative}, by viewing $V$ as the limit of the
sequence $(V_n)^{\pa}_{n\in\NN}$, both in the local topology and on
average. The set $V$ is then limit periodic, with the spectrum as
stated. Further, each $\vG^{*}_{i} / \vG^{*}$ is a discrete subgroup of
the fundamental domain $\TT^d = \RR^d / \vG^{*}$ under addition modulo
$\vG^{*}\!$, in obvious extension of Remark~\ref{rem:dual-discrete}.
\end{proof}

Note that each $\vG^{*}_{i}\nts /\vG^{*}$ is a finite Abelian group,
though its decomposition into cyclic groups is not possible without
further information on the lattices in $\cB$.

The corresponding TDS in the setting of symbolic dynamics is obtained
via the orbit closure $\XX^{\pa}_V$ of $1^{\pa}_{V}$ under the
translation action of $\vG$, which gives $(\XX^{\pa}_V, \vG)$. The
space $\XX^{\pa}_{V} \subset \{ 0, 1\}^{\vG}$ is compact and known as
the \emph{discrete hull} of $V$ under the action of $\vG$. Its flow
counterpart is $(\YY^{\pa}_{\nts V}, \RR^d)$, with
$\YY^{\pa}_{\nts V}$ the orbit closure under the translation action of
$\RR^d$ as the \emph{continuous hull}. When equipped with the Mirsky
measure, which is once again obtained as the patch frequency measure
derived from $V$ relative to a tied F{\o}lner averaging sequence
$\cA$, we get pure-point dynamical spectrum. It is, in additive
notation, given by $L^{\circledast}\cap \TT^d$ for
$(\XX^{\pa}_{V},\vG, \mu_{_\mathrm{M}})$ and by $L^{\circledast}$ for
$(\YY^{\pa}_{\nts V}, \RR^d, \mu_{_\mathrm{M}})$, respectively. No
eigenfunction of $(\XX^{\pa}_{V}, \vG, \mu_{_\mathrm{M}})$, except the
trivial one, admits a continuous representation. For
$(\YY^{\pa}_{V}, \RR^d, \mu_{_\mathrm{M}})$, the only continuous
eigenfunctions are the ones for the eigenvalues from $\vG^{*}\!
$. Still, starting from $V$, which is a generic set for both dynamical
systems, the eigenfunctions can be given by the FB coefficients.

To compute them, we first recall that, for any finite $I\subset \NN$,
the FB coefficient of $\vG^{\pa}_{I}$ is
\[
    a^{\pa}_{\vG^{\pa}_{\! I}} (k) \, = \lim_{r\to\infty} \myfrac{1}{\vol (B_r)}
    \sum_{x\in\vG^{\pa}_{\! I}\cap B_r} \! \ee^{-2 \pi \ii kx} \, = \,
    \begin{cases} \dens (\vG^{\pa}_{I}) , & \text{if } k\in\vG^{*}_{I}, \\
     0 , & \text{otherwise}. \end{cases}
\]
This gives the FB coefficients of $V_n$ via inclusion-exclusion as
$ a^{\pa}_{V_n} (k) = \sum_{I \subseteq [n]} (-1)^{\lvert I \rvert}
a^{\pa}_{\vG^{\pa}_{\! I}} (k) $, which vanishes for all
$k \notin \vG^{*}_{[n]}$.  This formula also holds in the limit as
$n\to\infty$, where one gets non-zero results only for
$k\in L^{\circledast}$. If $k\in L^{\circledast}$, one can define
\[
  D (k) \, = \, \{ i\in\NN : k \in \vG^{*}_{i} \setminus \vG^{*} \}
  \ts ,
\]
which is a finite set and takes the role of the denominator of a 
rational vector in our guiding examples, due to the property (B2). 
Then, for all  $k\in L^{\circledast}$, one finds
\begin{equation}\label{eq:gen-FB}
    a^{\pa}_{V} (k) \, = \, \dens^{\pa}_{\cA} (V) 
    \prod_{i\in D(k)} \myfrac{1}{1 - N_{i} } 
\end{equation}
together with $a^{\pa}_{V} (k) = 0$ for all other $k$.  The verification
of this formula uses the very same inclusion-exclusion calculation
that underlies the FB coefficient in Theorem~\ref{thm:vp-spec}.

The eigenfunction property follows once again from the relation
$a^{\pa}_{t+V} (k) = \ee^{-2 \pi \ii k t} a^{\pa}_{V} (k)$ in conjunction
with the fact that $V$ is a generic element for both hulls.  Putting
everything together, we have the following result.

\begin{theorem}\label{thm:gen-latt}
  Under the assumptions of Proposition~$\ref{prop:gen-latt}$, the
  set\/ $V$ induces the measure-theoretic dynamical system\/
  $(\XX^{\pa}_{V}, \vG, \mu_{_\mathrm{M}})$ and its suspension to the
  flow\/ $(\YY^{\pa}_{V}, \RR^d, \mu_{_\mathrm{M}})$.  Both have
  pure-point dynamical spectrum, given by\/
  $L^{\circledast} \! / \vG^{*} = \bigoplus_{i\in\NN}
  \vG^{*}_{i}/\vG^{*}$ for the former and by\/
  $L^{\circledast}=\sum_{i\in\NN} \vG^{*}_{i}$ for the latter.
   
  The eigenfunctions are defined via \eqref{eq:gen-FB} on an orbit of
  a generic element, and similarly for all generic elements of maximal
  density, which have full measure. However, they do not have a
  continuous representative unless\/ $k=0$ respectively\/
  $k\in\vG^{*}\!$.  \qed
\end{theorem}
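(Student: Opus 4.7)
The plan is to combine Proposition~\ref{prop:gen-latt} with the diffraction-to-dynamics equivalence and a direct inclusion-exclusion computation. First, Proposition~\ref{prop:gen-latt} identifies $V$ as a weak model set of maximal natural density with pure-point diffraction and FB spectrum $L^{\circledast} = \sum_{i\in\NN} \vG^{*}_{i}$. Invoking the equivalence of diffraction and dynamical spectra in the pure-point case from \cite{LMS,BL}, applied to $(\YY^{\pa}_{V}, \RR^d, \mu_{_\mathrm{M}})$, yields pure-point dynamical spectrum equal to the group generated by the FB spectrum, which is $L^{\circledast}$ itself. Passing to the $\vG$-action by suspension reduction (or directly by Remark~\ref{rem:dual-discrete} with $\vG^{*}$ in place of $\ZZ^d$), the spectrum becomes the quotient $L^{\circledast}\!/\vG^{*} = \bigoplus_{i\in\NN}\vG^{*}_{i}/\vG^{*}$ inside $\TT^d = \RR^d/\vG^{*}$.

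Next, I would establish formula~\eqref{eq:gen-FB} by approximation through the periodic sets $V_n = \vG \setminus \bigcup_{i=1}^{n}\vG_{i}$. For each finite $I\subseteq \NN$ the lattice $\vG^{\pa}_{I}$ has FB coefficients concentrated on $\vG^{*}_{I}$ with value $\dens(\vG^{\pa}_{I})$, and inclusion-exclusion gives
\[
    a^{\pa}_{V_n}(k) \, = \sum_{I\subseteq[n]} (-1)^{\lvert I\rvert}
       a^{\pa}_{\vG^{\pa}_{I}}(k) \ts .
\]
Because of (B2), a given $k$ lies in $\vG^{*}_{I}$ precisely when $I \subseteq D(k)$, so for $n$ large enough that $D(k)\subseteq [n]$, the sum factorises and evaluates to $\dens(\vG)\prod_{i\in D(k)}\frac{1}{1-N_i}\prod_{i\in [n]\setminus D(k)}(1-\frac{1}{N_i})$. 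Condition (B3) makes the latter product converge absolutely as $n\to\infty$, and the density prefactor limits to $\dens^{\pa}_{\cA}(V)$ as shown before the theorem. For $k\notin L^{\circledast}$, we have $D(k)$ infinite, and a tail estimate using (B3) shows that the contribution from $I \not\subseteq [n]$ is negligible while $a^{\pa}_{V_n}(k) \to 0$. Local-topology convergence $V_n \to V$ combined with the van Hove property of $\cA$ then transfers the limit to $a^{\pa}_{V}(k)$, yielding~\eqref{eq:gen-FB}.

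For the eigenfunction property, a direct change of variables in \eqref{eq:FB-def-1} (adapted to $V$) gives the translation identity $a^{\pa}_{t+V}(k) = \ee^{-2\pi\ii k t}\ts a^{\pa}_{V}(k)$ for every $t\in\RR^d$. Since $V$ is generic for $\mu_{_\mathrm{M}}$ on both $\XX^{\pa}_{V}$ and $\YY^{\pa}_{\nts V}$, this equation pins down a measurable eigenfunction for eigenvalue $k$ on the full-measure $\vG$- or $\RR^d$-orbit of $V$. The same computation works verbatim starting from any generic element of maximal density in place of $V$, and the set of such elements has full Mirsky measure by \cite{BHS,Keller}, so the eigenfunctions are defined on a full-measure set as claimed.

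The main obstacle is the last clause on the absence of continuous representatives. Here I would use hereditarity: $\XX^{\pa}_{V}$ contains the empty configuration $\varnothing$ as a fixed point, on which any non-trivial $\SSS^1$-valued eigenfunction would have to vanish; but an eigenfunction with unimodular eigenvalue has constant modulus along orbits, and continuity together with the fact that $\varnothing$ lies in the closure of the $\vG$-orbit of $V$ (via translations to infinity inside holes) forces the modulus to vanish everywhere, a contradiction unless $k=0$. For the $\RR^d$-flow $(\YY^{\pa}_{V},\RR^d,\mu_{_\mathrm{M}})$, the same argument excludes continuity except for those $k$ that factor through the maximal equicontinuous factor attached to the period lattice of $V$, which is precisely $\vG^{*}$; for such $k$ the eigenfunction is literally $t\mapsto \ee^{-2\pi\ii k t}$ on the dense orbit of $V$ and extends continuously to the hull.
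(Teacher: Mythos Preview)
Your sketch tracks the paper's argument closely: the theorem carries no separate proof, the justification being the discussion that precedes it (Proposition~\ref{prop:gen-latt} for pure-pointness and the FB spectrum, the inclusion--exclusion leading to \eqref{eq:gen-FB}, and the translation identity $a^{\pa}_{t+V}(k)=\ee^{-2\pi\ii kt}a^{\pa}_{V}(k)$), combined with the diffraction--dynamics equivalence of \cite{LMS,BL}.

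Two small corrections. The condition for $k\in\vG^{*}_{I}$ is $D(k)\subseteq I$, not $I\subseteq D(k)$: by (B2) the groups $\vG^{*}_{i}/\vG^{*}$ sit in direct sum, and $D(k)$ is the unique minimal support of $k$ in that decomposition, so $k\in\vG^{*}_{I}=\sum_{i\in I}\vG^{*}_{i}$ exactly when $I$ contains this support. Your subsequent factorisation is the correct one, so this is only a wording slip. For $k\notin L^{\circledast}$, it is not that $D(k)$ is infinite --- rather $k\notin\vG^{*}_{I}$ for \emph{every} finite $I$ (since $\vG^{*}_{I}\subseteq L^{\circledast}$), so each term $a^{\pa}_{\vG^{\pa}_{I}}(k)$ vanishes and $a^{\pa}_{V_n}(k)=0$ for all $n$; no tail estimate is required there.

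Where you genuinely depart from the paper is the non-continuity clause. The paper simply invokes \cite{BHS,Keller}, whereas you argue directly via the fixed point $\varnothing\in\XX^{\pa}_{V}$. This is valid and more self-contained: condition (B2) together with the CRT produces holes of arbitrary size in $V$, placing $\varnothing$ in the $\vG$-orbit closure of $V$; since $\lvert f\rvert$ is constant along orbits and $f(\varnothing)=0$ for any nontrivial eigenvalue, continuity forces $f\equiv 0$. For the flow, your identification of $\vG^{*}$ as the exceptional set is also correct, the continuous eigenfunctions for $k\in\vG^{*}$ coming from the suspension structure. This direct route is a nice alternative to appealing to the general weak-model-set machinery.
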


\begin{remark}\label{rem:max-entropy}
  As in our guiding example, one can replace $\mu_{_\mathrm{M}}$ by
  the pushforward of its product with the Bernoulli measure of the
  fair coin toss to obtain $\mu_{\max}$, the measure of maximal
  entropy. The corresponding Bernoulli thinning process applied to $V$
  will produce (almost surely) a realisation that is generic for
  $\mu^{\pa}_{\max}$, and our argument with Kolmogorov's version of
  the SLLN extends to this case, so that the eigenfunctions of both
  $(\XX^{\pa}_{V}, \vG, \mu_{_\mathrm{M}})$ and
  $(\XX^{\pa}_{V}, \vG, \mu_{\max})$ derive from the FB coefficients.
  For $\mu_{\max}$, however, they span only a subspace of
  $L^2 (\XX^{\pa}_{V}, \mu^{\pa}_{\max})$, and similarly for the
  corresponding flows under $\RR^d \nts$. We leave further details to
  the reader.  \exend
\end{remark}

While these results look nice and decently general, it is not possible
to compute the spectrum much further unless additional properties are
present. Fortunately, this is the case in the number-theoretic
setting, as we shall now discuss in detail for one class of examples.

\section{Quadratic number fields}\label{sec:quadratic}

Quadratic number fields come in two flavours, real and imaginary,
which require slightly different computations. After some
recollections with focus on our needs, we will thus treat them
separately; see \cite{Zagier} for general background.

Any quadratic number field is of the form $K = \QQ (\sqrt{d\ts}\,)$
for some integer $1 \ne d \in \ZZ$ that is square-free. The ring of
integers in $K$, which is a maximal order, is
\begin{equation}\label{eq:order}
  \cO^{\pa}_{K} \, = \, \begin{cases} \ZZ \bigl[
     \frac{1 + \sqrt{d\ts}}{2} \ts \bigr], &
    \text{if $d \equiv 1 \bmod 4$}, \\
      \ZZ[ \sqrt{d\ts}\,], & \text{otherwise}. \end{cases}
\end{equation}
When $d<0$, the unit group $\cO^{\times}_{K}$ is finite, and
isomorphic to $C_4$ ($d=-1$), to $C_6$ ($d=-3$), or to
$C_2 = \{ \pm 1 \}$ (in all remaining cases). In contrast,
$\cO^{\times}_{K} \simeq C_2 \times C_{\infty}$ for all real quadratic
fields, where the $C_{\infty}$ is generated by a fundamental unit.

Let us recall a useful result that gives access to the Abelian
factor groups $\cO/\fp^{\kappa}$ for a prime ideal $\fp$ and
$\kappa\in\NN$, which follows from the material in
\cite[Chs.~I.3 and I.12]{Neukirch}.

\begin{fact}\label{fact:factors}
  Let\/ $K$ be a quadratic number field, with\/ $\cO = \cO^{\pa}_{K}$
  as its ring of integers. If\/ $\fp \subseteq \cO$ is a prime ideal
  over the rational prime\/ $p$ and\/ $\kappa\in\NN$, one has
\[
    \cO\nts / \fp^{\kappa} \ts \simeq \, \begin{cases}
      C_{p^{\lceil \nts \kappa/2 \rceil}} \times
          C_{p^{\lfloor \nts \kappa/2 \rfloor}} , &
          \text{if $p$ is ramified,} \\
      C_{p^\kappa}\nts \times C_{p^\kappa} , & \text{if $p$ is inert,} \\
      C_{p^{\kappa}}, & \text{if $p$ is a splitting prime,} \end{cases}
\]
where the norm of\/ $\fp$ is\/ $p^2$ if\/ $p$ is inert and\/
$p$ in the other two cases.  \qed
\end{fact}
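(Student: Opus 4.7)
The plan is to localise the problem at $\fp$ and then dispatch the three decomposition types separately. First, I would pass to the completion $\cO_\fp$, a complete DVR with uniformiser $\pi$ and residue field $\cO/\fp$, and use the canonical identification $\cO/\fp^\kappa \simeq \cO_\fp/\pi^\kappa\cO_\fp$ as rings (hence as Abelian groups). The basic structural input throughout is that $\cO$ is a free $\ZZ$-module of rank $2$ with integral basis as in \eqref{eq:order}, so that $\cO_\fp$ is a free $\ZZ_p$-module of rank $2$; when the residue degree is $1$, as in the ramified and split cases, $\{1,\pi\}$ furnishes such a basis. The norm statements $N(\fp)\in\{p,p^2\}$ reduce to $|\cO/\fp|=p^f$, with $f=2$ precisely in the inert case.

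The inert case $(p)=\fp$ is immediate: $\fp^\kappa=(p^\kappa)$, and the freeness of $\cO$ over $\ZZ$ gives $\cO/(p^\kappa)\simeq (\ZZ/p^\kappa\ZZ)^2 \simeq C_{p^\kappa}\times C_{p^\kappa}$. The split case $(p)=\fp\fq$ with $\fp\neq\fq$ is almost as quick: in $\cO_\fp$, the ideal $\fq\cO_\fp$ is the unit ideal, hence $p\cO_\fp=\fp\cO_\fp$, i.e., $p$ itself is a uniformiser. The local extension is then unramified of residue degree $1$, so $\cO_\fp\simeq\ZZ_p$ and therefore $\cO/\fp^\kappa\simeq \ZZ_p/p^\kappa\ZZ_p\simeq C_{p^\kappa}$.

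The real work lies in the ramified case $(p)=\fp^2$, where $\pi^2 = p\,v$ for some unit $v\in\cO_\fp^\times$ whose $\ZZ_p$-component is itself a unit, as follows from comparing $\fp$-valuations in the expansion $v=a'+b'\pi$. The strategy is to describe $\pi^\kappa\cO_\fp$ explicitly as a $\ZZ_p$-submodule of $\cO_\fp=\ZZ_p\oplus\pi\ZZ_p$ and then read off the quotient as an Abelian group. For $\kappa=2m$ one has $\pi^{2m}\cO_\fp=p^m\cO_\fp = p^m\ZZ_p\oplus p^m\pi\ZZ_p$, giving $C_{p^m}\times C_{p^m}$. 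For $\kappa=2m+1$, the module $\pi^{2m+1}\cO_\fp$ is the $\ZZ_p$-span of $\pi^{2m+1}=p^m v\pi$ and $\pi^{2m+2}=p^{m+1}v$; after using the first generator to clear the $\pi$-component of the second, this reduces to $p^{m+1}\ZZ_p\oplus p^m\pi\ZZ_p$, yielding $C_{p^{m+1}}\times C_{p^m}$. The main obstacle I anticipate is precisely this last reduction, namely ensuring that the two generators can be put into genuinely diagonal form over $\ZZ_p$ rather than remaining a mere sum; this is where the unit property of the $\ZZ_p$-component of $v$ is used crucially. The resulting exponents then match $\lceil\kappa/2\rceil$ and $\lfloor\kappa/2\rfloor$ as claimed.
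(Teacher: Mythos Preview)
Your argument is correct. The paper does not actually supply a proof of this fact: it is stated with a \qed and merely pointed to \cite[Chs.~I.3 and I.12]{Neukirch} as background, so there is no paper-internal proof to compare against. Your localisation-and-completion approach, with the Smith-normal-form reduction in the ramified case, is a clean and standard way to justify the statement in full; the key step you flag --- that the constant term of the unit $v$ in the decomposition $v=a'+b'\pi$ lies in $\ZZ_p^\times$ --- is exactly what makes the diagonalisation go through, and it follows from the residue field being $\mathbb{F}_p$ in the ramified case. One minor simplification: once you know $\pi^{2m}\cO_\fp=p^m\cO_\fp$, it is slightly quicker to compute $\pi\cO_\fp$ as a $\ZZ_p$-submodule once (namely $p\ZZ_p\oplus\pi\ZZ_p$) and then just multiply by $p^m$, rather than handling the generators $\pi^{2m+1}$ and $\pi^{2m+2}$ separately.
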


Let us next state a result that will allow us to verify property (B3)
in our later examples.  From now on, we shall mostly suppress the
index $K$, and simply write $\cO$ instead of $\cO^{\pa}_{K}$. Since we
shall need some simple estimates later on to verify the condition
(B3), we recall the following property for the \emph{absolute norm} of
non-zero elements in $\cO$, as defined by the index of $x\ts\cO$ in
$\cO$, so $\No (x) = \No (x \ts \cO) \defeq [\cO : x \ts \cO]$.

\begin{fact}\label{fact:norm-est}
  Let\/ $K$ be a quadratic number field, with\/ $\cO = \cO^{\pa}_{K}$ as
  its ring of integers. If\/ $\fa \subseteq \cO$ is a non-zero ideal
  and\/ $0\ne x\in \fa$, one has\/ $\No (x) \geqslant \No (\fa)$.
  Moreover, if\/ $\fa$ is a principal ideal, one has equality for the
  generating element of\/ $\fa$.
\end{fact}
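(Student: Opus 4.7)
The plan is to exploit the inclusion $x \ts \cO \subseteq \fa \subseteq \cO$ of additive subgroups of $\cO$ (all of full rank as $\ZZ$-modules, hence of finite index) and apply the tower formula for indices. The inclusion holds because $x \in \fa$ immediately gives $x\ts \cO \subseteq \fa$, and $\fa \subseteq \cO$ is the definition of an integral ideal.

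The key computation is then
\[
   [\cO : x \ts \cO] \, = \, [\cO : \fa] \cdot [\fa : x\ts\cO] ,
\]
which, by the definition $\No (x) = \No (x \ts \cO) = [\cO : x \ts \cO]$ and $\No (\fa) = [\cO : \fa]$, reads $\No (x) = \No (\fa) \cdot [\fa : x \ts \cO]$. Since $[\fa : x \ts \cO] \in \NN$ as $x \ne 0$ forces $x \ts \cO$ to have full rank in $\fa$, we obtain $\No (x) \geqslant \No (\fa)$ immediately.

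For the second assertion, equality in the inequality above holds if and only if $[\fa : x \ts \cO] = 1$, that is, $x \ts \cO = \fa$. If $\fa = y \ts \cO$ is principal and we take the generating element $x = y$, this identity is automatic, giving $\No (y) = \No (\fa)$ as claimed.

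There is no serious obstacle: the entire argument is the tower law for indices of abelian groups combined with the definition of the absolute norm. As a sanity check, one could alternatively invoke multiplicativity of the ideal norm applied to the factorisation $x \ts \cO = \fa \cdot \fb$ with an integral ideal $\fb$ (which exists precisely because $x \ts \cO \subseteq \fa$), giving $\No (x) = \No (\fa) \ts \No (\fb) \geqslant \No (\fa)$ with equality iff $\fb = \cO$; but the tower formulation avoids any appeal to unique factorisation of ideals and works verbatim in the quadratic setting stated.
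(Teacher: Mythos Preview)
Your proof is correct and follows essentially the same approach as the paper: both use the inclusion $x\ts\cO \subseteq \fa \subseteq \cO$ together with the tower formula $[\cO : x\ts\cO] = [\cO : \fa]\,[\fa : x\ts\cO]$, and then observe that $[\fa : x\ts\cO]$ is a positive integer, with equality to $1$ precisely when $x$ generates $\fa$.
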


\begin{proof}
  If $0 \ne x \in \fa \subseteq \cO$, we have $x\ts \cO \subseteq 
  \fa \subseteq \cO$ and thus
\[ 
  \No (x) \, = \, [ \cO : x \ts \cO ] \, = \,
  [\cO : \fa ] \, [ \fa : x \ts \cO] \, = \,
  \No (\fa) \ts [ \fa : x \ts \cO] \, \geqslant \,
  \No (\fa)
\]  
because $[\fa : x \ts \cO]$ is a positive integer. If $\fa$ is
principal, one has $\fa = y \ts \cO$ for some $y\in\cO$, which gives
$[\fa : y \ts \cO]=1$ and thus the second claim.
\end{proof}

The ring $\cO$ is a lattice in $\CC\simeq \RR^2$ when $d<0$, while the
lattice property requires a Minkowski embedding for $d>1$.  For our
discussion of power-free integers, it is thus convenient to consider
imaginary and real quadratic fields separately, as we shall now do.

\subsection{Imaginary quadratic fields}

If $K$ is an imaginary quadratic field, $\cO = \cO^{\pa}_K$ is a lattice
in $\CC \simeq \RR^2$, such as the Gaussian integers $\ZZ[\ii]$ from
$K=\QQ(\ii)$. Here, we identify $z=x+\ii y \in\CC$ with the column
vector $(x,y)^{\trans} \! \in \RR^2$ as usual.  For our calculations,
we now need a symmetric {$\QQ$-bilinear} form on $K$ that equals 
the standard Euclidean scalar product in $\RR^2 $, which is
\begin{equation}\label{eq:form-complex}
  x.y \, \defeq \, \myfrac{1}{2} (\ol{x}\ts y + x \ts \ol{y})
  \, = \, \mathrm{Re} (\ol{x} \ts y) \ts .
\end{equation}
The dual of $\cO$ then is
\[
  \cO^{*} \, = \, \{ y \in K : x.y \in \ZZ \text{ for all }
          x \in \cO \} \ts ,
\]
which is also the dual of $\cO$ in the sense of lattices in
$\RR^2\nts $, explaining the factor $\frac{1}{2}$ in
\eqref{eq:form-complex}.

Let us now write $K = \QQ (\ii \sqrt{\delta\ts}\,)$ with
$\delta \in \NN$ square-free, which turns the original condition
$d\equiv 1 \bmod 4$ into $\delta \equiv 3 \bmod 4$. Then, the
following result is a special case of a more general property; see
\cite[Prop.~III.2.4]{Neukirch}.

\begin{lemma}\label{lem:dual-order-complex}
  Let a square-free\/ $\delta\in\NN$ be fixed and consider the
  imaginary quadratic field\/ $K = \QQ(\ii \sqrt{\delta\ts}\,)$, with
  its ring of integers\/ $\cO = \cO^{\pa}_{K}$ according to
  Eq.~\eqref{eq:order} for\/ $d = -\delta$.  Then, the dual of\/ $\cO$
  with respect to the\/ $\QQ$-bilinear form \eqref{eq:form-complex} is
\[
  \cO^{*} \, = \, \myfrac{\ii}{\sqrt{\delta\ts}} \begin{cases}
    2 \ts \cO , & \text{if } \delta \equiv 3 \bmod 4 \ts , \\
    \cO , & \text{otherwise} \ts . \end{cases}
\]  
\end{lemma}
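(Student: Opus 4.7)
The plan is to verify the claim by a direct lattice computation. Since $y \mapsto x.y$ is $\ZZ$-linear in $x$, it suffices to impose the condition $x.y \in \ZZ$ for $x$ running through a fixed $\ZZ$-basis of $\cO$. I would parametrise $y\in K$ in the $\QQ$-basis $\{1,\ii\sqrt{\delta}\ts\}$, writing $y=s+t\ts\ii\sqrt{\delta}$ with $s,t\in\QQ$, and use \eqref{eq:form-complex} to translate each basis condition into a rational linear equation on $(s,t)$. Solving this small linear system then gives an explicit description of $\cO^{*}$, which finally has to be matched with the claimed factorised form.

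For the generic case, where $\delta\not\equiv 3\bmod 4$ and $\cO=\ZZ[\ii\sqrt{\delta}\ts]$, the basis $\{1,\ii\sqrt{\delta}\ts\}$ leads to the two conditions $s\in\ZZ$ and $t\ts\delta\in\ZZ$, hence $\cO^{*}=\ZZ+\tfrac{\ii}{\sqrt{\delta}\ts}\ZZ$. Pulling out the common factor $\ii/\sqrt{\delta}$ gives $\tfrac{\ii}{\sqrt{\delta}\ts}\bigl(\ZZ+\ZZ\ts\ii\sqrt{\delta}\ts\bigr)=\tfrac{\ii}{\sqrt{\delta}\ts}\cO$, as asserted. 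For the remaining case, with $\delta\equiv 3\bmod 4$ and $\omega=\tfrac{1+\ii\sqrt{\delta}}{2}$, the condition from $x=\omega$ reads $\tfrac{s+t\ts\delta}{2}\in\ZZ$, so the joint constraints become $s\in\ZZ$ and $s+t\ts\delta\in 2\ZZ$. Together, these force $t=m/\delta$ with $m\in\ZZ$ and $s\equiv m\bmod 2$, giving $\cO^{*}=\bigl\{\ts s+\tfrac{\ii m}{\sqrt{\delta}\ts}:s,m\in\ZZ,\ s\equiv m\bmod 2\ts\bigr\}$.

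The main step is then to identify this last set with $\tfrac{\ii}{\sqrt{\delta}\ts}\cdot 2\ts\cO$. Dividing through by $\ii/\sqrt{\delta}$ turns a generic element into $m-s\ts\ii\sqrt{\delta}$, subject to $s\equiv m\bmod 2$. On the other hand, $2\ts\cO$ carries the $\ZZ$-basis $\{2,1+\ii\sqrt{\delta}\ts\}$, so its generic element $(2a+b)+b\ts\ii\sqrt{\delta}$ is characterised precisely by having its real part and the coefficient of $\ii\sqrt{\delta}$ of the same parity. These two parity-constrained lattices coincide under the substitution $(m,-s)=(2a+b,b)$, which finishes the proof. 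The only mildly delicate point is this parity bookkeeping, which is also where the discriminantal distinction $\delta\equiv 3\bmod 4$ really enters. The computation reflects the general identity $\cO^{*}=2\ts\mathfrak{d}^{-1}$ with $\mathfrak{d}$ the different of $K/\QQ$ for our normalisation \eqref{eq:form-complex} of the bilinear form, so one could alternatively deduce the lemma from the standard formula for the different of a quadratic field.
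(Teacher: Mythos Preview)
Your proof is correct and follows essentially the same route as the paper: a direct lattice computation via a $\ZZ$-basis of $\cO$ in each of the two cases. The only cosmetic difference is that the paper writes down the dual basis explicitly (e.g.\ $\{1,\tfrac{\ii}{\sqrt{\delta}}\}$ and $\{1-\tfrac{\ii}{\sqrt{\delta}},\tfrac{2\ii}{\sqrt{\delta}}\}$) and then rewrites the resulting lattice, whereas you parametrise $y$ in coordinates and solve the linear/parity constraints; both arrive at the same identification, and your closing remark on the different matches the paper's reference to \cite[Prop.~III.2.4]{Neukirch}.
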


\begin{proof}
  When $\delta \not\equiv 3 \bmod 4$, we have
  $\cO = \langle 1, \ii \sqrt{\delta\ts}\,\rangle^{\pa}_{\ZZ}$. It is
  easy to check that
  $\bigl\{ 1 , \frac{\ii}{\sqrt{\delta\ts}} \bigr\}$ is the dual basis
  with respect to our bilinear form \eqref{eq:form-complex}.  We thus
  have
\[
     \cO^{*} \ts = \, \Big\langle 1 , \myfrac{\ii}{\sqrt{\delta\ts}}
     \Big\rangle_{\ZZ} \, = \, \myfrac{\ii}{\sqrt{\delta\ts}} \,
     \big\langle {-}\ii \sqrt{\delta\ts}, 1 \big\rangle^{\pa}_{\ZZ}
     \, = \, \myfrac{\ii}{\sqrt{\delta\ts}} \,
     \big\langle 1, \ii \sqrt{\delta\ts} \,\big\rangle^{\pa}_{\ZZ}
     \, = \, \myfrac{\ii}{\sqrt{\delta\ts}} \ts \cO
\]  
  for all these cases.

  Likewise, when $\delta\equiv 3 \bmod 4$, we have
  $\cO = \big\langle 1, \frac{1 + \ii \sqrt{\delta\ts}}{2} \ts
  \big\rangle_{\ZZ}$.  Here, the dual basis with respect to
  \eqref{eq:form-complex} is
  $\big\{ 1 {-} \frac{\ii}{\sqrt{\delta\ts}} , \frac{2
    \ii}{\sqrt{\delta\ts}} \big\}$, so that we get
\[
    \cO^{*}\ts = \, \Big\langle 1{-}\myfrac{\ii}{\sqrt{\delta\ts}} , 
      \myfrac{2 \ii}{\sqrt{\delta\ts}} \Big\rangle_{\ZZ} \, = \,
      \myfrac{2\ii}{\sqrt{\delta\ts}} \Big\langle 
      \myfrac{{-}1{-}\ii\sqrt{\delta\ts}}{2}, 1 \Big\rangle_{\ZZ}
      \, = \, \myfrac{2\ii}{\sqrt{\delta\ts}} \Big\langle 1,
         \myfrac{1{+}\ii\sqrt{\delta\ts}}{2} \Big\rangle_{\ZZ}
         \, = \, \myfrac{2 \ii}{\sqrt{\delta\ts}} \ts \cO \ts ,
\]  
which establishes the claim for this case.
\end{proof}

From here, one can calculate the dual of non-trivial principal ideals
via
\begin{equation}\label{eq:dual-principal-ideal}
  (z)^{*} \ts = \, (z \cO)^{*} \ts = \,
  \myfrac{1}{\ts \ol{z} \ts } \ts \cO^{*} .
\end{equation}
This allows to calculate the spectrum for all imaginary quadratic
fields with class number $1$, which are the nine \cite[p.~83]{Zagier}
fields $\QQ(\ii \sqrt{\delta\ts}\,)$ with
\[
     \delta \in  \{ 1, 2, 3, 7, 11, 19, 43, 67, 163 \} \ts .
\]
For the general case, we need the dual of an arbitrary non-trivial
ideal, $\fa$, which is given by
\begin{equation}\label{eq:dual-ideal-complex}
     \fa^{*} \ts = \, \ol{\fa}^{\ts -1} \ts \cO^{*} 
\end{equation}
where $\ol{\fa}$ denotes the complex conjugate ideal and
$\fb^{-1} \defeq \{ x\in K : x \fb \subseteq \cO \}$ is the
\emph{inverse} of $\fb$, and a fractional ideal. In particular,
$\fb^{-1} \fb = \cO$. Since $\cO$ is a Dedekind domain, all
non-trivial ideals are invertible; see \cite[Ch.~I.4]{J} for
background material. When $\fa$ and $\fb$ are coprime ideals, 
so $\fa + \fb = \cO$, one has
\[
  ( \fa \fb )^{*} \ts = \, ( \fa \cap \fb )^{*} \ts =
  \, \fa^{*} \nts + \fb^{*} \ts = \, \bigl( \ol{\fa}^{\ts -1}
  + \ol{\fb}^{\ts -1} \bigr) \cO^{*} \ts = \,
  \mathfrak{c}^{-1} \cO^{*}
\]
for a unique fractional ideal $\mathfrak{c}$. The corresponding
formula holds for the dual of multiple intersections, provided all
ideals involved are pairwise coprime, which is the ideal-theoretic
version of the CRT \cite[Thm.~3.6]{Neukirch}. In this sense, we can
always define the \emph{denominator} via the fractional ideal
$\mathfrak{c}$, compare \cite[Cor.~3.9]{Neukirch}, and then decompose
the denominator into prime ideals in the Dedekind domain $\cO$.

When viewing $\cO$ as a lattice in $\RR^2$, it is often useful to pin
down a basis matrix for it in terms of the standard Cartesian basis of
$\RR^2$. Here, for $\delta \equiv 3 \bmod 4$, respectively for all
other cases, natural choices are
\begin{equation}\label{eq:basis-imag}
  B \, = \, \myfrac{1}{2} \begin{pmatrix} 2 & 1 \\
    0 & \sqrt{\delta\ts} \end{pmatrix}
  \quad \text{and} \quad
  B \, = \, \begin{pmatrix} 1 & 0 \\ 0 & \sqrt{\delta\ts}
  \end{pmatrix} .
\end{equation}
Let us discuss two quadratic fields of widespread interest, namely
$\delta = 1$ and $\delta = 3$, before we continue with the general
case.

\subsubsection{Gaussian integers}
Since $\ZZ[\ii] = \ZZ^2$, both arithmetically and geometrically, we
begin with this case.  Also, we profit from $\ZZ[\ii]$ being a
Euclidean domain (and thus having unique prime factorisation up to
units, that is, up to $\{ 1, \ii, -1, -\ii \}$).

Relative to $\cO$, there are three kinds of rational primes, namely
$2 = - \ii (1+\ii)^2$, which is the only \emph{ramified} prime, the
primes $p \equiv 3 \bmod 4$, which are \emph{inert} and thus stay
prime in $\cO$, and the \emph{splitting} primes, $p \equiv 1 \bmod 4$,
giving complex conjugate pairs of Gaussian primes via
$p = \fp \ts \ol{\fp}$, such as $5=(2+\ii) (2-\ii)$ or
$13=(3+2\ii)(3-2\ii)$.  We thus get the Gaussian primes
\[
   \pg \, = \, \{ 1{+}\ts\ii, 3, \fp_{(5)}, \ol{\fp}_{(5)}, 7, 11,
     \fp_{(13)}, \ol{\fp}_{(13)},  \fp_{(17)}, \ol{\fp}_{(17)}, 
     19, \ldots \} \ts .
\]
Here, they are listed along increasing rational primes and some rule
for the splitting primes, such as taking a representative with
positive real and imaginary parts and bigger real part first. Note
that they are only unique up to units.

As a lattice in $\RR^2$, the ring $\ZZ[\ii]$ is self-dual, as follows
from Lemma~\ref{lem:dual-order-complex} because $\delta = 1$ and $\ii$
is a unit. For any principal ideal $(z) = z \ts \ZZ[\ii]$ with
$z\ne 0$, we get its dual from \eqref{eq:dual-principal-ideal} as
\[
  (z)^{*} \, = \, \myfrac{1}{\ts\bar{z}\ts} \ts \ZZ[\ii] \, = \,
  \myfrac{1}{\ts |z|^2} \ts (z) \ts ,
\]
which makes the calculations in this case particularly simple. Note
that all principal ideals are square lattices again.

Now, let $\bs{\kappa} = (\kappa_{\fp})^{\pa}_{\fp\in \pg}$ with
$\kappa_{\fp} \in \NN\cup\{\infty\}$, set $\pg' = \{ \fp \in \pg :
\kappa_{\fp} < \infty \}$ and define
\[
  V_{\sg ,\bs{\kappa}} \, = \, \ZZ[\ii] \, \setminus
  \!\bigcup_{\fp\in\pg'}\!  \bigl( \fp^{\kappa_{\fp}}\bigr) ,
\]
with the special case $V_{\sg, \kappa}$ for constant
$\kappa_{\fp} = \kappa$, for all $\fp\in\pg$.  For our further
analysis, we need some restriction on $\bs{\kappa}$, as indicated
earlier. If $\fa \subseteq \cO$ is an ideal, we define $\lambda (\fa)$
from the condition (B3) for $\fa$ viewed as a lattice, with respect to
the Euclidean norm. We then need
\begin{equation}\label{eq:tail-cond}
     \sum_{\fp\in\pg'} \lambda (\fp^{\kappa_{\fp}})^{-2}
      \, < \,  \infty 
\end{equation}
as our condition (B3). This can now be related to the norms of the
ideals via Fact~\ref{fact:norm-est}, then giving the slightly simpler
sufficient condition
\begin{equation}\label{eq:G-cond}
    \sum_{\fp\in\pg'} \No (\fp)^{-\kappa^{\pa}_{\fp}} \, < \, \infty \ts .
\end{equation}    
To see this, observe that the field norm of $z\in K$ is
$N (z) = z \ol{z}$ and agrees with the squared length of $z$ when
viewed as a vector in $\RR^2$. As the relation to the absolute norm is
given by $\No (z) = [ \cO : z \cO] = z \ol{z} = N (z)$, we can use
Fact~\ref{fact:norm-est} for $\lambda (\fa)$.

Let us expand on the restrictions for the choice of $\bs{\kappa} $.
When $\pg'$ is a finite set, without any further restriction on the
exponents $\kappa^{\pa}_{\fp}$, the point set $V^{\pa}_{\sg, \bs{\kappa}}$
is lattice periodic, and the FB spectrum simply is the dual of the
lattice of periods.  Let us thus consider the case that $\pg'$ is an
infinite set.  Since only finitely many rational primes are ramified
over $K$, there are no restrictions on the corresponding exponents.
When $p$ is inert, the shortest non-zero element in $p\cO$ has length
$\lambda (p \cO) = \sqrt{ \No (p \cO)} = p$, so that we only need
$\kappa^{\pa}_{p} \geqslant 1$ here. Finally, when $p = \fp \ol{\fp}$
splits, we get $\lambda (\fp) = \lambda (\ol{\fp}) = p$, which implies
that the corresponding exponents need to be $\geqslant 2$ for almost
all of them, or for sufficiently many so that \eqref{eq:G-cond} and
thus \eqref{eq:tail-cond} holds.

The principal ideal $(\fp^{m} )$ is a square sublattice of $\ZZ[\ii]$
of index
\[
  \bigl[ \ZZ[\ii] : (\fp^m) \bigr] \, = \, \bigl[ \ZZ[\ii] : \fp^m
  \ZZ[\ii] \bigr] \, = \, \No (\fp)^m ,
\]
from which one derives the natural density of $V_{\sg,\bs{\kappa}}$ as
\[
    \dens ( V_{\sg, \bs{\kappa}} ) \, =  \ts \prod_{\fp\in\pg'} 
     \bigl( 1 - \No (\fp)^{-\kappa_{\fp}} \bigr) .
\]
This formula follows from an inclusion-exclusion argument via the
periodic covering sets that are obtained by taking only finitely
many primes into account, together with a tail estimate that
relies on \eqref{eq:G-cond}, as in Section~\ref{sec:extend}. 
The density formula simplifies to
$ \dens ( V_{\sg, \kappa} ) = 1/\zeta^{\pa}_{\QQ(\ii)} (\kappa)$ when all
$\kappa_{\fp}=\kappa\geqslant 2$, where $\zeta^{\pa}_{\QQ (\ii)}$ is the
Dedekind zeta function of $\QQ(\ii)$.

To describe $V_{\sg, \bs{\kappa}}$ as a weak model set, we can now
employ the CPS $(\CC, H, \cL)$ with $\CC\simeq \RR^2$ as direct space,
the compact Abelian group
$H = \bigotimes_{\fp\in\pg'} \ZZ[\ii]/\fp^{\kappa_{\fp}} \ZZ[\ii]$ as
internal space, and the diagonal embedding of $\ZZ[\ii]$ into
$\CC{\times} H$ as lattice, so
$\cL = \bigl\{ (x,x^{\star}): x\in \ZZ[\ii] \bigr\}$ with the
$\star$-map being given by
$x\mapsto x^{\star} \defeq (x \bmod \fp^{\kappa_{\fp}} )^{\pa}_{\fp \in
  \pg'}$.

The FB spectrum of $V^{\pa}_{\sg,\bs{\kappa}}$, and thus the dynamical
spectrum of the induced MTDS with the measure $\mu_{_\mathrm{M}}$, can
now be calculated as in the previous section. Following the route from
Remark~\ref{rem:dual-alternative}, and observing $\cO^* = \cO$
together with
$\bigl(\fp^{\kappa_{\fp}}\cO \cap \fq^{\kappa_{\fq}} \cO\bigr)^* =
(\ol{\fp})^{-\kappa_{\fp}} + (\ol{\fq})^{-\kappa_{\fq}}$ for any pair
$\fp \ne \fq$ of Gaussian primes (and analogously for multiple
intersections), one finds
\[
  L^{\circledast} \, = \, \Bigl\{ k \in \QQ[\ii] : 
  \begin{array}{c} \den (k) \text{ is 
    $(\kappa^{\pa}_{\ol{\fp}}\ts {+} 1)$-free for all }
    \fp \in \pg'  \\ \text{ and not divisible by any }
    \fq \in \pg \!\!\setminus\! \pg' \end{array}
    \Bigr\} \, = \, 
    \sum_{\fp\in\pg'} \bigl( \fp^{-\kappa^{\pa}_{\ol{\fp} }}\bigr) ,
\]
where $L^{\circledast}\!/\cO = \bigoplus_{\fp\in\pg} \bigl(
\fp^{-\kappa^{\pa}_{\ol{\fp} }}\bigr) /\cO$ is the direct sum of
countably many finite Abelian groups, all viewed as subgroups of
$\QQ(\ii)/\cO \subset \TT^2$.  This structure is also reflected in the
CPS and its dual, the latter being $(\CC, \widehat{H}, \cL^{0})$ with 
$\widehat{\CC}\simeq \CC$, the compact Abelian group
$\widehat{H} = \bigoplus_{\fp\in\pg'} \cO / (\fp^{\kappa_{\fp}})$,
where $\cO / (\fp^{\kappa_{\fp}} ) \simeq (\fp^{-\kappa_{\fp}})/\cO$,
and the annihilator $\cL^{0}$ of $\cL$ as its lattice. Each of the
contributing Abelian groups can be decomposed via
Fact~\ref{fact:factors}.

\begin{remark}
  Despite several similarities between the visible lattice points of
  $\ZZ^2$ and the square-free integers in $\ZZ[\ii]$, the
  corresponding dynamical systems are fundamentally different. From
  the above, we see that the spectra are different, so they cannot be
  isomorphic in the measure-theoretic sense, due to the Halmos--von
  Neumann theorem. Also as topological dynamical systems, they cannot
  be conjugate, because their extended symmetry groups differ
  \cite{BRY}, and none of them can be a factor of the other, by an
  argument put forward in \cite{BBN} that was later formulated in more
  general terms in \cite{Fabian}.  \exend
\end{remark}

\subsubsection{Eisenstein integers}

With $\xi = \frac{1}{2}(1 + \ii \sqrt{3}\,)$, which is a primitive
sixth root of unity, the ring of Eisenstein integers is $\ZZ[\xi]$,
which is the maximal order of $K=\QQ (\ii \sqrt{3}\,)$, another
Euclidean domain. The main difference to the previous example is the
fact that $\ZZ[\xi]$, viewed geometrically, is a triangular lattice in
$\RR^2$. All principal ideals are then triangular lattices as well.
The first step will be parallel to the above, but we will then take
another step to change it into a square lattice for direct comparison
with the $\ZZ^2$-action of symbolic dynamics.

The only ramified prime is $p=3$, where one has
$3 = \ol{\xi} (1+\xi)^2$. The inert primes are the ones with
$p\equiv 2 \bmod 3$, while $p\equiv 1 \bmod 3$ are the splitting
primes, then with $p = \fp \ol{\fp}$, such as
$7 = (2+\xi)(2+\ol{\xi}\ts )$ or $13=(3+\xi)(3+\ol{\xi}\ts )$. The
Eisenstein primes are thus
\[
  \cP_{_\mathrm{E}} \, = \, \{ 2, 1+\xi, 5, \fp^{\pa}_{(7)},
  \ol{\fp}^{\pa}_{(7)}, 11, \fp^{\pa}_{(13)},
  \ol{\fp}^{\pa}_{(13)}, 17, \fp^{\pa}_{(19)},
  \ol{\fp}^{\pa}_{(19)}, 23, \ldots \} \ts ,
\]
where we have
\begin{equation}\label{eq:Eisen-dual}
  \cO^{*} \ts = \, \myfrac{2 \ii}{\sqrt{3}\ts} \ts \cO \quad
  \text{and} \quad (z)^{*} \ts = \, \myfrac{1}{\ol{z}} \ts \cO^{*}
  \ts = \, \myfrac{2 \ii \ts z}{\sqrt{3} \ts\ts |z|^2} \ts \cO \ts .
\end{equation}
The required restrictions on $\bs{\kappa}$ are completely analogous to 
the case of the Gaussian integers. With 
${\pe}{\!'} = \{ \fp \in \pe : \kappa_{\fp} < \infty \}$,
this then results in the FB spectrum
\[
  L^{\circledast} \ts = \, \myfrac{2\ii}{\sqrt{3}\ts} \ts
    \Bigl\{ k \in \QQ(\xi) : \begin{array}{c} \den (k) \text{ is
    $(\kappa^{\pa}_{\ol{\fp}} +1)$-free for all $\fp \in \pe'$} \\ 
    \text{and not
    divisible by any $\fq \in \pe \!\setminus \nts \pe'$} 
    \end{array} \Bigr\} \, = \, \myfrac{2\ii}{\sqrt{3}\ts}
     \sum_{\fp\in\pe'} \bigl( \fp^{-\kappa^{\pa}_{\ol{\fp}}} \bigr) ,
\]
again computed via the dual lattice approach of 
Remark~\ref{rem:dual-alternative}, and the formula from
\eqref{eq:Eisen-dual}. Like before, we also get the structure
from Remark~\ref{rem:dual-discrete}, here with
\[
  L^{\circledast} \! / \cO^{*} \ts = \, \frac{ 2 \ii }{\sqrt{3} \ts}
  \bigoplus_{\fp\in\pe'} \bigl( \fp^{-\kappa^{\pa}_{\ol{\fp}}} \bigr)/\cO \ts ,
\]
which emerges from the restriction of $L^{\circledast}$ to the
fundamental domain for $\cO^{*}$. Again, the representation chosen
matches the needs of the dynamical context, while all contributing
groups, up to isomorphism, can be decomposed via
Fact~\ref{fact:factors}.

Next, we want to adjust this result for a change from $\cO$ to a
square lattice.  Observe that a lattice $\vG \subset \RR^2$ with basis
matrix $B$ is turned into the standard integer lattice by left
multiplication with $B^{-1}$, so $B^{-1} \vG = \ZZ^2$. Then, the dual
is given by
\[
    (B^{-1} \vG)^{*} \ts = \, B^{\trans} \vG^{*} .
\]
Here, the basis matrix of $\cO$ relative to the standard Cartesian
basis reads
\[
  B \, = \, \myfrac{1}{2} \begin{pmatrix} 2 & 1 \\
    0 & \sqrt{3\ts} \end{pmatrix} .
\]
Consequently, if we replace $\vG = \cO$ by $B^{-1} \vG$, the FB
spectrum changes to $B^{\trans} L^{\circledast}$, again with $z=x+\ii y$
being identified with the column vector $(x,y)^{\trans}$.

\subsubsection{General imaginary quadratic field}

Let $K$ be a general imaginary quadratic field, with maximal order
$\cO = \cO^{\pa}_{K}$. We can always view $\cO$ as a lattice in
$\RR^2 \simeq \CC$. Here, as the class number need not be $1$, we have
to work with prime ideals and their powers. We simply write $\fp$ for
a prime ideal, and $\cP_{\cO}$ for the set of all prime ideals. 
Given $\kappa_{\fp} \in\NN \cup \{ \infty \}$ for
$\fp\in\cP_{\cO}$ subject to the condition
\begin{equation}\label{eq:gen-cond}
  \sum_{\fp\in\cP^{\pa}_{\!\cO}} [\cO : \fp]^{-\kappa_{\fp}} \, < \, \infty \ts ,
\end{equation}
we define
$V = \cO \setminus \bigcup_{\fp\in\cP^{\prime}_{\!\cO}}
\fp^{\kappa_{\fp}}$, where
$\cP^{\ts\prime}_{\!\cO} = \{ \fp \in \cP^{\pa}_{\! \cO} : \kappa_{\fp} <
\infty \}$.  The denominator of a number $k\in K$ is defined as
before, now with coprimality understood in terms of ideals.

\begin{theorem}\label{thm:spectrum-imag}
  For\/ $\delta \in \NN$ square-free, consider the imaginary quadratic
  field\/ $K = \QQ (\ii \sqrt{\delta\ts}\,)$ with its ring of
  integers, $\cO = \cO^{\pa}_{K}$. Let\/ $\cP^{\pa}_{\!\cO}$ be the set of
  prime ideals in\/ $\cO$, select\/
  $\bs{\kappa} = (\kappa_{\fp})^{\pa}_{\fp \in \cP^{\pa}_{\cO}}$ with\/
  $\kappa_{\fp} \in \NN \cup \{ \infty \}$ subject to the condition in
  \eqref{eq:gen-cond}, and let\/ $\cP^{\ts\prime}_{\cO}$ be as above.

  Then, the FB spectrum of\/
  $V = \cO \setminus \bigcup_{\fp\in\cP^{\prime}_{\!\cO}}
  \fp^{\kappa_{\fp}}$ is given by
\[
  L^{\circledast} \ts = \, \myfrac{\ii \psi^{\pa}_{\delta}}
     {\sqrt{\delta}\ts} \ts \Bigl\{ k \in K :
     \begin{array}{c} \den (k) \text{ is
       $(\kappa^{\pa}_{\ol{\fp}} \nts + \nts 1)$-free for all
       $\fp \in \cP^{\ts\prime}_{\!\cO}$} \\ 
       \text{and not divisible by any
       $\fq \in \cP_{\!\cO} \nts\setminus \nts \cP^{\ts\prime}_{\!\cO}$} 
    \end{array} \Bigr\} \, = \,
    \myfrac{\ii \psi^{\pa}_{\delta}}{\sqrt{\delta}\ts}
    \sum_{\fp\in\cP^{\prime}_{\!\cO}} \ol{\fp}^{\ts -\kappa^{\pa}_{\fp}}  ,
\]
with\/ $\psi^{\pa}_{\delta} = 2$ for\/ $\delta\equiv 3 \bmod 4$ and\/
$\psi^{\pa}_{\delta}=1$ otherwise. Furthermore, one has
\[
  L^{\circledast}\!/\cO^{*} \, = \, \myfrac{\ii
    \psi^{\pa}_{\delta}}{\sqrt{\delta}\ts}
  \bigoplus_{\fp\in\cP^{\prime}_{\!\cO}} \ol{\fp}^{\ts
    -\kappa^{\pa}_{\fp}} \! /\cO
\]
as the restriction of\/ $L^{\circledast}$ to the fundamental domain 
of\/ $\cO^{*}\!$.
\end{theorem}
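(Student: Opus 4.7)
The plan is to invoke the general framework of Section~\ref{sec:extend} with the ambient lattice $\vG = \cO$ and the family of sublattices $\cB = \{\fp^{\kappa^{\pa}_{\fp}} : \fp \in \cP^{\ts\prime}_{\!\cO}\}$, and then to convert the abstract formula of Proposition~\ref{prop:gen-latt} into the concrete description via the explicit computation of dual ideals using Lemma~\ref{lem:dual-order-complex} and Eq.~\eqref{eq:dual-ideal-complex}.

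First, I would verify conditions (B1)--(B3). Condition (B1) is immediate after ordering the sublattices by non-decreasing index $[\cO : \fp^{\kappa^{\pa}_{\fp}}]$. Condition (B2) is the ideal-theoretic CRT \cite[Thm.~3.6]{Neukirch}: distinct prime ideals $\fp \ne \fq$ of $\cO$ are coprime, hence so are their powers. For (B3), I would bound $\lambda(\fp^{\kappa^{\pa}_{\fp}})^{-2}$ above by $\No(\fp^{\kappa^{\pa}_{\fp}})^{-1} = [\cO : \fp]^{-\kappa^{\pa}_{\fp}}$ via Fact~\ref{fact:norm-est}, so summability follows directly from the assumption \eqref{eq:gen-cond}. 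Having thus verified the setting, Proposition~\ref{prop:gen-latt} gives
\[
  L^{\circledast} \, = \sum_{\fp \in \cP^{\ts\prime}_{\!\cO}}
  \bigl(\fp^{\kappa^{\pa}_{\fp}}\bigr)^{*},
\]
with duals taken relative to the form \eqref{eq:form-complex}.

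Second, I would compute each summand explicitly. Applying \eqref{eq:dual-ideal-complex} to $\fp^{\kappa^{\pa}_{\fp}}$ and using that complex conjugation commutes with ideal multiplication yields
\[
  \bigl(\fp^{\kappa^{\pa}_{\fp}}\bigr)^{*} \, = \,
  \overline{\fp^{\kappa^{\pa}_{\fp}}}^{\,-1} \cO^{*} \, = \,
  \ol{\fp}^{\ts -\kappa^{\pa}_{\fp}}\ts \cO^{*}.
\]
By Lemma~\ref{lem:dual-order-complex}, $\cO^{*} = \myfrac{\ii\psi^{\pa}_{\delta}}{\sqrt{\delta}\ts}\cO$, and since $\ol{\fp}^{\ts -\kappa^{\pa}_{\fp}}$ is a fractional ideal, multiplication by $\cO$ absorbs into it. Summing then gives the second displayed form of $L^{\circledast}$ in the theorem. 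For the set-theoretic description, writing $k = \myfrac{\ii\psi^{\pa}_{\delta}}{\sqrt{\delta}\ts}\ts z$ reduces the condition $k \in L^{\circledast}$ to $z \in \sum_{\fp} \ol{\fp}^{\ts -\kappa^{\pa}_{\fp}}$. By pairwise coprimality and the CRT, this is equivalent to the denominator ideal of $z$ (in the sense of \cite[Cor.~3.9]{Neukirch}) being supported on $\{\ol{\fp} : \fp \in \cP^{\ts\prime}_{\!\cO}\}$ with exponent at $\ol{\fp}$ bounded by $\kappa^{\pa}_{\fp}$. Reindexing via the involution $\fp \leftrightarrow \ol{\fp}$ on $\cP^{\pa}_{\!\cO}$ yields the stated condition that $\den(k)$ is $(\kappa^{\pa}_{\ol{\fp}} + 1)$-free for all $\fp \in \cP^{\ts\prime}_{\!\cO}$ and not divisible by any prime outside $\cP^{\ts\prime}_{\!\cO}$.

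Finally, the description of $L^{\circledast}/\cO^{*}$ follows by restricting to a fundamental domain of $\cO^{*}\!$, exactly as in Remark~\ref{rem:dual-discrete}. Coprimality of distinct prime ideals guarantees, via yet another application of the CRT, that the sum modulo $\cO^{*}$ decomposes as the claimed direct sum. The main technical hazard is bookkeeping rather than any single substantive step: the asymmetry of the form \eqref{eq:form-complex} forces $\ol{\fp}$ rather than $\fp$ into the dual, and one must work consistently in the Dedekind-domain language because, for class numbers greater than one, the ideals $\fp^{\kappa^{\pa}_{\fp}}$ and their duals need not be principal, so formula \eqref{eq:dual-principal-ideal} is unavailable and only the full ideal-theoretic identity \eqref{eq:dual-ideal-complex} can be used.
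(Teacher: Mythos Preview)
Your proposal is correct and follows essentially the same route as the paper: identify $L^{\circledast}$ via the dual-lattice formula from the general framework (you cite Proposition~\ref{prop:gen-latt}, the paper cites Remark~\ref{rem:dual-alternative}, which carry the same content), then compute each $(\fp^{\kappa_{\fp}})^{*}$ via Eq.~\eqref{eq:dual-ideal-complex} and Lemma~\ref{lem:dual-order-complex}, and finally pass to the quotient by CRT. Your explicit verification of (B1)--(B3) via Fact~\ref{fact:norm-est} and the detailed handling of the $\fp \leftrightarrow \ol{\fp}$ reindexing are more thorough than the paper's brief proof, but nothing substantively different happens.
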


\begin{proof}
  In view of Remark~\ref{rem:dual-alternative}, any element of
  $L^{\circledast}$ lies in the dual of
  $\bigcap_{\fp\in \cI} \fp^{\kappa_{\fp}}$ for some \emph{finite}
  $\cI \subset \cP^{\pa}_{\!\cO}$. As the prime ideals are mutually
  coprime, we get
\[
  \Bigl( \bigcap_{\fp\in\cI} \fp^{\kappa_{\fp}}\Bigr)^{*} = \,
  \sum_{\fp\in\cI} \bigl( \fp^{\kappa_{\fp}} \bigr)^{*} = \,
  \sum_{\fp\in\cI} \ol{\fp}^{\ts -\kappa_{\fp}} \cO^{*} = \,
  \myfrac{\ii \psi^{\pa}_{\delta}}{\sqrt{\delta}\ts}
  \sum_{\fp\in\cI} \ol{\fp}^{\ts -\kappa_{\fp}} 
\]
via Eq.~\eqref{eq:dual-ideal-complex}, where
$\cO^{*} = \frac{\ii \psi^{\pa}_{\delta}}{\sqrt{\delta\ts}}\ts \cO$ by
Lemma~\ref{lem:dual-order-complex}.  As $\cI$ can be any finite 
subset of $\cP^{\ts\prime}_{\!\cO}$, the formula for $L^{\circledast}$ 
follows. A standard computation modulo $\cO^{*}$ then results 
in the claimed representation of $L^{\circledast}\!/\cO^{*}$
as a direct sum.
\end{proof}  

Turning $\cO$ into $\ZZ^2$ as explained before, we get the
corresponding result for the symbolic dynamical system as follows.

\begin{coro}
  Let the setting be as in Theorem~$\ref{thm:spectrum-imag}$.  If\/
  $B$ is the basis matrix of\/ $\cO$, one has\/ $B^{-1} \cO = \ZZ^2$,
  and the FB spectrum of the\/ $\bs{\kappa}$-free integers in this
  formulation is given by\/ $B^{\trans} L^{\circledast}$.  This is
  also the dynamical spectrum of the MTDS\/
  $(\YY^{\pa}_V, \RR^2, \mu_{_\mathrm{M}})$, while\/ its intersection
  with\/ $ \TT^2 = \RR^2 \! /\ZZ^2$ is the one of the MTDS\/
  $(\XX^{\pa}_V, \ZZ^2, \mu_{_\mathrm{M}})$.  \qed
\end{coro}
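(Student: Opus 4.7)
The plan is to reduce the corollary to Theorem~\ref{thm:spectrum-imag} by a linear change of variables and then transport the spectral conclusions through the resulting conjugacy. First, the identity $B^{-1}\cO = \ZZ^2$ is immediate from the choice of $B$ as the basis matrix of $\cO$ relative to the standard Cartesian basis of $\RR^2$: the columns of $B$ are a $\ZZ$-basis of $\cO$, so $\cO = B\ZZ^2$ and the first claim follows.

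Next, setting $V' \defeq B^{-1} V \subset \ZZ^2$, I would establish the transformation rule for FB coefficients under the invertible linear map $x \mapsto B^{-1}x$. Substituting in \eqref{eq:FB-def-1} and using $(B^{-1}x)\cdot k = x \cdot (B^{-\trans}k)$, together with the fact that if $(A_n)$ is a centred van Hove (or F{\o}lner) averaging sequence in $\RR^2$ then so is $(B^{-1}A_n)$, with volumes rescaled by $|\det B|^{-1}$, a routine computation yields
\[
   a^{\pa}_{V'}(k) \, = \, |\det B| \cdot a^{\pa}_{V}(B^{-\trans}k)
\]
for every $k \in \RR^2$. Hence $k$ lies in the FB spectrum of $V'$ if and only if $B^{-\trans}k \in L^{\circledast}$, which is equivalent to $k \in B^{\trans} L^{\circledast}$. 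This gives the stated FB spectrum of the $\bs{\kappa}$-free integers realised inside $\ZZ^2$.

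To identify this with the dynamical point spectrum, I would note that the CPS $(\CC, H, \cL)$ underlying Theorem~\ref{thm:spectrum-imag} is transported by the direct-space map $B^{-1}$ to a CPS of the form $(\RR^2, H, \cL')$ for which $V'$ is again a weak model set of maximal density in the sense of \cite{BHS}. Proposition~\ref{prop:gen-latt} and Theorem~\ref{thm:gen-latt} then apply directly to $V'$ with $\vG = \ZZ^2$, yielding pure-point spectrum equal to the FB spectrum $B^{\trans}L^{\circledast}$ for the $\RR^2$-flow, while the equivalence of diffraction and dynamical spectra \cite{LMS,BL} guarantees the agreement of the two notions of spectrum. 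The passage to the $\ZZ^2$-action is then the standard suspension argument already used in Corollary~\ref{coro:vis-spec} and Remark~\ref{rem:dual-discrete}: the dynamical spectrum of the discrete action is obtained by reducing modulo $\ZZ^2$, that is, by intersecting $B^{\trans}L^{\circledast}$ with the fundamental domain $\TT^2 = [0,1)^2$.

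The only point requiring care is the preservation of the averaging process under the linear change of variables, since both the FB coefficients and the Mirsky measure rely on tied averages over centred balls. This is handled by checking that $B^{-1}$ maps van Hove sequences to van Hove sequences and rescales volumes uniformly, so that existence of the limits and maximality of the density are preserved; beyond this, the argument is a direct transport of Theorem~\ref{thm:spectrum-imag} along the conjugacy $V \leftrightarrow B^{-1}V$.
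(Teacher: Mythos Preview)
Your proposal is correct and matches the paper's approach: the corollary carries a \qed with no separate proof, relying on the observation made earlier (in the Eisenstein discussion) that $(B^{-1}\vG)^{*} = B^{\trans}\vG^{*}$, so that $L^{\circledast} = \sum_i \vG_i^{*}$ from Remark~\ref{rem:dual-alternative} transforms to $B^{\trans}L^{\circledast}$ under $B^{-1}$. Your direct computation $a^{\pa}_{V'}(k) = \lvert\det B\rvert\, a^{\pa}_{V}(B^{-\trans}k)$ is simply the FB-coefficient version of this same linear change of variables, and your care about van Hove sequences under $B^{-1}$ fills in a detail the paper leaves implicit.
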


\subsection{Real quadratic fields}

Unlike for imaginary fields, the maximal order $\cO = \cO^{\pa}_{K}$ of a
real quadratic field $K = \QQ(\sqrt{d\ts}\,)$ is a dense subset of
$\RR$, but not a lattice.  It becomes one under the standard Minkowski
embedding $ \theta \colon \cO \xrightarrow{\quad} \RR^2$, as defined
by $x \mapsto (x, x')$ with ${}'$ being algebraic conjugation in
$K$. The latter is the unique field automorphism that fixes $\QQ$ and
sends $\sqrt{d}\mapsto - \sqrt{d}$. This gives
$\cL = \theta (\cO) = \{ (x, x') : x \in \cO \}$, which is a lattice
in $\RR^2$. Clearly, the mapping $\theta$ is also well defined on $K$.

To continue, we need the symmetric $\QQ$-bilinear form
\begin{equation}\label{eq:bilinear-real}
  x.y \, \defeq \, \tr (xy) \, = \, xy + x' y'
\end{equation}
which equals the standard scalar product of $\theta (x)$ with
$\theta(y)$ in $\RR^2 \nts$. It is related to the field norm
$N(x) = x x'$ via the estimate
\[
    x.x \, \geqslant \, 2 \ts\ts \lvert N (x) \rvert \ts ,
\]
as follows from $x.x = (x \mp x' )^2 \pm 2 x x'$ via a case
distinction whether $xx'$ is positive or negative. This will later
again allow us to use Fact~\ref{fact:norm-est}.

Relative to \eqref{eq:bilinear-real}, the \emph{dual} of $\cO$ reads
\[
  \cO^* \, = \, \{ y \in K : x.y \in \ZZ \text{ for all } x \in \cO \}
  \ts ,
\]
which agrees with the co-different of $\cO$ relative to $\ZZ$ here and
can be calculated as a special case of
\cite[Prop.~III.2.4]{Neukirch}. In our setting, this can be stated as
follows.
  
\begin{lemma}\label{lem:dual-order-real}
  Let\/ $d > 1$ be a square-free integer and consider the
  corresponding real quadratic field, $K = \QQ(\sqrt{d\ts}\,)$, with
  its ring of integers, $\cO = \cO^{\pa}_{K}$, according to
  Eq.~\eqref{eq:order}. Then, its dual with respect to the\/
  $\QQ$-bilinear form \eqref{eq:bilinear-real} is
\[
    \cO^{*} \, = \, \myfrac{1}{2 \sqrt{d\ts}} \ts \begin{cases}
    2 \ts \cO , &
    \text{if } d \equiv 1 \bmod 4 \ts , \\
    \cO , & \text{otherwise}\ts .
  \end{cases}
\]  
\end{lemma}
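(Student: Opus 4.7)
The plan is to parallel the proof of Lemma~\ref{lem:dual-order-complex} by writing down an explicit $\ZZ$-basis of $\cO$ in each case of \eqref{eq:order}, forming the Gram matrix with respect to the trace form \eqref{eq:bilinear-real}, inverting it to obtain the dual basis, and then recognising the resulting $\ZZ$-lattice as the claimed multiple of $\cO$. (Alternatively, one could just invoke the codifferent formula $\mathfrak{d}_{K/\QQ}^{-1} = (f'(\theta))^{-1}\cO$ from \cite[Prop.~III.2.4]{Neukirch}, with $f$ the minimal polynomial of the generator $\theta$ of $\cO$ over $\ZZ$, but the direct computation is short.)

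For $d \not\equiv 1 \bmod 4$, we have $\cO = \langle 1, \sqrt{d\ts}\,\rangle^{\pa}_{\ZZ}$, and the Gram matrix of this basis under \eqref{eq:bilinear-real} is $\mathrm{diag}(2, 2d)$, since $\tr(1) = 2$, $\tr(\sqrt{d\ts}) = 0$ and $\tr(d) = 2d$. The dual basis is therefore $\bigl\{\myfrac{1}{2}, \myfrac{1}{2\sqrt{d\ts}}\bigr\}$, which yields
\[
   \cO^{*} \, = \, \Big\langle \myfrac{1}{2}, \myfrac{1}{2\sqrt{d\ts}}
     \Big\rangle_{\ZZ} \, = \, \myfrac{1}{2\sqrt{d\ts}}
     \big\langle \sqrt{d\ts}, 1 \big\rangle^{\pa}_{\ZZ}
     \, = \, \myfrac{1}{2\sqrt{d\ts}} \ts \cO \ts ,
\]
in agreement with the claim.

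For $d \equiv 1 \bmod 4$, one has $\cO = \langle 1, \omega \rangle^{\pa}_{\ZZ}$ with $\omega = (1+\sqrt{d\ts}\,)/2$. A brief calculation gives the Gram matrix $G$ with entries $1.1 = 2$, $1.\omega = 1$, and $\omega.\omega = (1+d)/2$, so $\det G = d$. Inverting $G$ and expressing the dual basis back in $K$ produces $\bigl\{\myfrac{1}{2}-\myfrac{1}{2\sqrt{d\ts}},\ \myfrac{1}{\sqrt{d\ts}}\bigr\}$. The remaining step is to identify the $\ZZ$-span of this pair with $\myfrac{2}{2\sqrt{d\ts}}\ts\cO = \myfrac{1}{\sqrt{d\ts}}\langle 1, \omega \rangle^{\pa}_{\ZZ}$, which is a unimodular change of basis over $\ZZ$ since $\myfrac{1}{2}-\myfrac{1}{2\sqrt{d\ts}} = \bigl(\myfrac{1}{2}+\myfrac{1}{2\sqrt{d\ts}}\bigr)-\myfrac{1}{\sqrt{d\ts}}$ and conversely.

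No substantive obstacle arises; the only point requiring mild care is the $d \equiv 1 \bmod 4$ case, where the basis change between the dual basis coming from $G^{-1}$ and the natural generators of $\myfrac{1}{\sqrt{d\ts}}\ts\cO$ has to be verified explicitly, the rest being routine.
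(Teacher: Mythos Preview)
Your proof is correct and essentially identical to the paper's: both cases proceed by writing down the dual basis with respect to the trace form \eqref{eq:bilinear-real} and then recognising the resulting $\ZZ$-span as the claimed scalar multiple of $\cO$. The only cosmetic difference is that you record the Gram matrix explicitly before inverting, whereas the paper simply states the dual basis; your dual basis $\bigl\{\tfrac{1}{2}-\tfrac{1}{2\sqrt{d}},\,\tfrac{1}{\sqrt{d}}\bigr\}$ in the $d\equiv 1\bmod 4$ case is exactly the paper's $\bigl\{\tfrac{\sqrt{d}-1}{2\sqrt{d}},\,\tfrac{1}{\sqrt{d}}\bigr\}$.
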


\begin{proof}
  When $d \not\equiv 1 \bmod 4$, the maximal order is
  $\cO = \langle 1, \sqrt{d\ts}\, \rangle^{\pa}_{\ZZ}$, where
  $\bigl\{ \frac{1}{2}, \frac{1}{2 \sqrt{d\ts}} \bigr\}$ is the dual
  basis with respect to \eqref{eq:bilinear-real}, whence we obtain
\[
  \cO^{*} \ts = \, \Big\langle \myfrac{1}{2}, \myfrac{1}{2\sqrt{d\ts}}
  \Big\rangle_{\ZZ} \, = \, \myfrac{1}{2\sqrt{d\ts}} \big\langle
  \sqrt{d\ts}, 1 \big\rangle^{\pa}_{\ZZ} \, = \, \myfrac{1}{2\sqrt{d\ts}}
  \big\langle 1 , \sqrt{d\ts}\, \big\rangle^{\pa}_{\ZZ} \, = \,
  \myfrac{1}{2\sqrt{d\ts}} \ts \cO \ts .
\]  
  
Likewise, when $d \equiv 1 \bmod 4$, we have
$\cO = \big\langle 1, \frac{1+\sqrt{d\ts}}{2} \big\rangle_{\ZZ}$ with
$\big\{ \frac{\sqrt{d\ts}-1}{2 \sqrt{d\ts}} , \frac{1}{\sqrt{d\ts}}
\big\}$ as dual basis relative to \eqref{eq:bilinear-real}. This
results in
\[
  \cO^{*} \ts = \, \myfrac{1}{\sqrt{d\ts}} \ts \Big\langle
  \myfrac{\sqrt{d} \ts {-} 1}{2} , 1 \Big\rangle^{\pa}_{\ZZ} \, = \,
  \myfrac{1}{\sqrt{d\ts}} \ts \Big\langle 1,
  \myfrac{1{+}\sqrt{d\ts}}{2} \Big\rangle_{\ZZ} \, = \,
  \myfrac{1}{\sqrt{d\ts}} \ts\ts \cO
\]
and proves the remaining case.
\end{proof}

Let us mention in passing that the basis matrices of $\theta (\cO)$
with respect to the standard Cartesian basis of $\RR^2$ read
\begin{equation}\label{eq:basis-real}
  B \, = \, \begin{pmatrix} 1 & \sqrt{d\ts}\, \\
    1 & {-}\sqrt{d\ts} \end{pmatrix}  \quad \text{and} \quad
    B \, = \, \begin{pmatrix} 1 & \frac{1+\sqrt{d\ts}}{2}\, \\
    1 & \frac{1-\sqrt{d\ts}}{2} \end{pmatrix}
\end{equation}
for $d\not \equiv 1 \bmod 4$ and $d\equiv 1 \bmod 4$,
respectively.

Now, Lemma~\ref{lem:dual-order-real} also gives us the dual of any
non-trivial principal ideal, namely
\begin{equation}\label{eq:gen-id}
  (x)^* \ts = \, \myfrac{1}{x}\ts \cO^* \ts = \,
  \myfrac{\psi^{\pa}_{d}}{x} \ts \cO \, = \,
  \myfrac{\psi^{\pa}_{d} }{N(x)} (x') \ts ,
\end{equation}
where $\psi^{\pa}_{d} = 1/\sqrt{d\ts}$ for $d\equiv 1 \bmod 4$ and half
of this number for all other cases.  For a general non-trivial ideal
$\fa$, we get the dual as
\[
     \fa^{*} \ts = \, \fa^{-1} \ts \cO^{*} \, = \, \psi^{\pa}_{d} \, 
     \fa^{-1} \ts \cO \ts ,
\]
again with the inverse ideal $\fa^{-1}$.

Cases with class number $1$ (in the wider sense) include
$\QQ (\sqrt{d\ts}\,)$ with
\[
  d \in \{ 2^*, 3, 5^*, 6, 7, 11, 13^*, 14, 17^*, 19, 21, 22,
           23, 29^*, 31, ... \} \ts ,
\]
where it is still unknown whether this set is infinite (as
conjectured) or not. Integers with a ${}^*$ mark cases whose class
number is $1$ also in the narrow sense \cite{Zagier}; see Sequences
\textsf{A{\ts}003172} and \textsf{A{\ts}003655} of \cite{OEIS}. Let us
consider two special cases first.

\subsubsection{The case $d=2$}

Let us treat $K=\QQ(\sqrt{2\ts}\,)$ and its maximal order,
$\cO=\ZZ[\sqrt{2}\,]$, which is one of the cases with class number
$1$. Lemma~\ref{lem:dual-order-real} and Eq.~\eqref{eq:gen-id} give
\[
  \cO^{*} \ts = \, \myfrac{1}{2 \sqrt{2\ts}} \ts \cO \, = \, 
  \myfrac{1}{4} \ts (\sqrt{2\ts} \, ) \quad \text{and} \quad
  (x)^{*} \ts = \, \myfrac{1}{2 \sqrt{2\ts}\ts x} \ts \cO \ts .
\]

The primes of $\cO$ are again of three types and emerge from the
rational primes as follows. There is one ramified prime, which is
$2 = (\sqrt{2}\,)^2$, while all rational primes
$p\equiv \pm 3 \bmod 8$ are inert. The splitting primes are the
rational primes with $p \equiv \pm 1 \bmod 8$, and any such prime
splits as $p = (r + s \sqrt{2} \, )(r - s \sqrt{2}\,)$, so
$p=r^2 - 2 s^2$, with coprime $r,s \in \ZZ$. We thus get the list of
primes in $\cO$ as
\[
  \cP^{\pa}_{\cO} \, = \, \{ \sqrt{2}, 3, 5, \fp^{\pa}_{(7)},\fp^{\ts\prime}_{(7)},
  11, 13, \fp^{\pa}_{(17)}, \fp^{\ts\prime}_{(17)}, 19, \ldots \} \ts ,
\]
again listed along increasing rational primes. Note that the absolute
norm for elements of $K$ is given by
$\No ( a + b \sqrt{2}\, ) = \lvert a^2-2 \ts b^2 \rvert$.

With $\cP$ and $\cP'$ defined in analogy to above, the FB spectrum is
\[
  L^{\circledast} \ts = \, \myfrac{1}{2\sqrt{2}\ts}\ts
    \Bigl\{ k \in K : \! \begin{array}{c} \den (k) \text{ is
         $(\kappa_{\fp} \nts + \nts 1)$-free for all
         $\fp \in \cP^{\ts\prime}_{\cO}$} \\
    \text{and not divisible by any
         $\fq \in \cP^{\pa}_{\nts\cO} \! \setminus \nts\nts
           \cP^{\ts\prime}_{\nts\cO}$} 
    \end{array} \! \Bigr\} \, = \, \myfrac{1}{2\sqrt{2}\ts}
    \sum_{\fp \in \cP^{\ts\prime}_{\nts\cO}} 
    \bigl( \fp^{-\kappa_\fp} \bigr)  ,
\]
again calculated via the dual lattices and
Remark~\ref{rem:dual-alternative}.  One can now turn $\theta (\cO)$
into $\ZZ^2$ by left multiplication with $B^{-1}$ as before, and the
FB spectrum then becomes $B^{\trans} L^{\circledast}$.

\subsubsection{The case $d=5$}

For $K = \QQ(\sqrt{5\ts}\,)$, one has $\cO=\ZZ[\tau]$, where
$\tau = \frac{1}{2}(1{+}\ts\sqrt{5\ts}\,)$ is the golden ratio. Here,
Lemma~\ref{lem:dual-order-real} and Eq.~\eqref{eq:gen-id} give 
\[
      \cO^{*}  \ts = \, \myfrac{1}{\sqrt{5}\ts} \ts \cO \, = \,
      \myfrac{1}{5} \ts (\sqrt{5\ts}\,) \quad \text{and} \quad
      (x)^{*} \ts = \, \myfrac{1}{\sqrt{5\ts}\ts x} \ts \cO \ts .
\]    
The primes of $\cO$ emerge from $\cP$ as $5 = (2 \tau {-} 1)^2$, which
is ramified, while any $p \equiv \pm 2 \bmod 5$ is inert and
$p\equiv \pm 1 \bmod 5$ splits, the latter as
$p = (r + s \tau) (r + s \tau' )$, so $p = r^2 + rs - s^2$, hence
\[
  \cP^{\pa}_{\cO} \, = \, \{ 2, 3, \sqrt{5}, 7, \fp^{\pa}_{(11)},
  \fp^{\ts\prime}_{(11)}, 13, 17, \fp^{\pa}_{(19)},
  \fp^{\ts\prime}_{(19)}, \ldots \} \ts .
\] 
The rest is completely analogous to the previous example.

\subsubsection{General real quadratic field}

The general situation is analogous to the imaginary case, except that
we need the Minkowski embedding $\theta$ to obtain lattices in
$\RR^2$. We can thus state the result as follows.

\begin{theorem}\label{thm:spectrum-real}
  For\/ $1 < d \in \NN$ square-free, consider the real quadratic
  field\/ $K = \QQ (\sqrt{d\ts}\,)$ with its ring of integers,
  $\cO = \cO^{\pa}_{K}$. Let\/ $\cP^{\pa}_{\!\cO}$ denote the set of prime
  ideals in\/ $\cO$, let\/
  $\bs{\kappa} = (\kappa_{\fp})^{\pa}_{\fp \in \cP^{\pa}_{\cO}}$ with\/
  $\kappa_{\fp} \in \NN \cup \{ \infty \}$, subject to the condition
  in \eqref{eq:gen-cond}, and set\/ $\cP^{\prime}_{\cO} =
  \{ \fp \in \cP^{\pa}_{\cO} : \kappa_{\fp} < \infty \}$.

  Then, the FB spectrum of\/
  $V = \theta \bigl(\cO \setminus \bigcup_{\fp\in\cP^{\prime}_{\!\cO}}
  \fp^{\kappa_{\fp}} \bigr)$ is given by
\[
  L^{\circledast} \ts  = \, \theta \biggl( \myfrac{ \psi^{\pa}_{d}}
     {2 \sqrt{d \ts}\ts} \ts \Bigl\{ k \in K :
     \begin{array}{c} \den (k) \text{ is
       $(\kappa_{\fp} \nts + \nts 1)$-free for all
       $\fp \in \cP^{\ts\prime}_{\!\cO}$} \\
       \text{and not divisible by any
       $\fq \in \cP_{\!\cO} \nts\setminus \nts \cP^{\ts\prime}_{\!\cO}$} 
    \end{array} \Bigr\} \biggr) \, = \,
    \myfrac{\psi^{\pa}_{d}}{2 \sqrt{d \ts}\ts} \,
    \theta \Bigl( \sum_{\;\fp\in\cP^{\prime}_{\!\cO}} \fp^{-\kappa_{\fp}} \Bigr) ,
\]
with\/ $\psi^{\pa}_{d} = 2$ for\/ $d \equiv 1 \bmod 4$ and\/
$\psi^{\pa}_{d}=1$ otherwise.  \qed
\end{theorem}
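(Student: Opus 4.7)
The plan is to follow the blueprint of Theorem~\ref{thm:spectrum-imag}, with the Minkowski embedding $\theta$ taking over the role that the identification $K\simeq\CC$ played in the imaginary case. The first step is to check that the family $\cB = \{ \theta(\fp^{\kappa_\fp}) : \fp \in \cP^{\prime}_{\cO} \}$ satisfies the hypotheses (B1)--(B3) from Section~\ref{sec:extend}: property (B1) is immediate after suitably ordering the ideals, (B2) is the pairwise coprimality of distinct prime ideals in the Dedekind domain $\cO$, and (B3) will follow from the assumption \eqref{eq:gen-cond} combined with the estimate $x.x \geqslant 2\ts\lvert N(x) \rvert$ and Fact~\ref{fact:norm-est}, which give $\lambda\bigl( \theta(\fp^{\kappa_\fp})\bigr)^{2} \geqslant 2\ts [\cO : \fp^{\kappa_\fp}]$. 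Invoking Proposition~\ref{prop:gen-latt} and Theorem~\ref{thm:gen-latt} then yields that $V$ is a weak model set of maximal natural density with pure-point diffraction, and its FB spectrum is $L^{\circledast} = \sum_{\fp\in\cP^{\prime}_{\cO}} \theta(\fp^{\kappa_\fp})^{*}\!$, where the dual is the standard Euclidean one in $\RR^2$.

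The second step is to compute each summand explicitly. Because the $\QQ$-bilinear form \eqref{eq:bilinear-real} on $K$ coincides with the Euclidean scalar product on $\RR^2$ under $\theta$, one has $\theta(\fa)^{*} = \theta(\fa^{*})$ for any fractional ideal $\fa$, where the right-hand dual is taken with respect to \eqref{eq:bilinear-real}. A direct unfolding of the definition, together with Lemma~\ref{lem:dual-order-real}, gives $\fa^{*} = \fa^{-1} \cO^{*} = \frac{\psi^{\pa}_{d}}{2\sqrt{d}\ts}\ts \fa^{-1}$; specialising to $\fa = \fp^{\kappa_\fp}$ and summing over all $\fp\in\cP^{\prime}_{\cO}$ via Remark~\ref{rem:dual-alternative} delivers the closed form for $L^{\circledast}$ stated in the theorem. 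In contrast to the imaginary case, no algebraic conjugate $\fp'$ appears here, because the trace form is symmetric in the two embeddings, while the hermitian form \eqref{eq:form-complex} used for imaginary fields is not.

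The third step is the denominator characterisation, which I would obtain from the identity $\fa^{-1} + \fb^{-1} = (\fa\fb)^{-1}$ for coprime integral ideals, proved by writing $1 = a+b$ with $a\in\fa,\ b\in\fb$ and splitting $x\in(\fa\fb)^{-1}$ as $x = xa + xb$, which puts $xb\in\fa^{-1}$ and $xa\in\fb^{-1}$; the reverse inclusion is automatic. Iterating yields $\sum_{\fp\in\cI}\fp^{-\kappa_\fp} = \bigl(\prod_{\fp\in\cI}\fp^{\kappa_\fp}\bigr)^{-1}$ for any finite $\cI\subset\cP^{\prime}_{\cO}$, so $k\in K$ lies in the full sum if and only if $\den(k)$ divides some such product, which is exactly the condition $\den(k)$ be $(\kappa_\fp{+}1)$-free for all $\fp\in\cP^{\prime}_{\cO}$ and divisible by no $\fq\in\cP_{\cO}\setminus\cP^{\prime}_{\cO}$.

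The main potential obstacle, while not deep, is the bookkeeping required to keep three different duality operations aligned: the Euclidean dual on $\RR^2$, the $\QQ$-bilinear dual on $K$ through \eqref{eq:bilinear-real}, and the fractional-ideal inverse in the Dedekind domain $\cO$. Once these are seen to commute correctly with $\theta$ and with the explicit formula from Lemma~\ref{lem:dual-order-real}, the remainder of the argument is essentially parallel to the proof of Theorem~\ref{thm:spectrum-imag} and can be executed without further difficulty.
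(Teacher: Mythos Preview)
Your proposal is correct and follows the same route the paper intends: the theorem is stated with \qed and the surrounding text simply says the situation ``is analogous to the imaginary case, except that we need the Minkowski embedding $\theta$'', so the paper's own proof is the proof of Theorem~\ref{thm:spectrum-imag} transported through $\theta$. Your write-up is in fact more detailed than what the paper supplies --- you make the verification of (B3) via $x.x\geqslant 2\lvert N(x)\rvert$ and Fact~\ref{fact:norm-est} explicit, you spell out the identity $\fa^{-1}+\fb^{-1}=(\fa\fb)^{-1}$ for coprime ideals behind the denominator description, and you correctly observe that the absence of the conjugate $\ol{\fp}$ (present in Theorem~\ref{thm:spectrum-imag}) stems from the trace form $\tr(xy)$ being $K$-bilinear rather than sesquilinear, so that $\fa^{*}=\fa^{-1}\cO^{*}$ here in place of $\ol{\fa}^{\,-1}\cO^{*}$.
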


The situation after changing to (geometric) $\ZZ^2$- and
$\RR^2$-action is as follows.

\begin{coro}
  Let the setting be as in Theorem~$\ref{thm:spectrum-real}$.  If\/
  $B$ is the basis matrix of the Minkowski embedding\/ $\theta(\cO)$
  according to Eq.~$\eqref{eq:basis-real}$, one has\/
  $B^{-1} \theta (\cO) = \ZZ^2$, and the FB spectrum of the\/
  $\bs{\kappa}$-free integers in the formulation with\/ $U=B^{-1} V$ is 
  given by\/ $B^{\trans} \theta (L^{\circledast})$.
  
  This is also the dynamical spectrum of the MTDS\/
  $(\YY^{\pa}_{\! U}, \RR^2, \mu_{_\mathrm{M}})$, while\/ its
  intersection with\/ $ \TT^2 = \RR^2\! / \ZZ^2$ is the one of the
  MTDS\/ $(\XX^{\pa}_U, \ZZ^2, \mu_{_\mathrm{M}})$, where one obtains
\[
     \TT^2 \cap B^{\trans} \theta (L^{\circledast}) \, = \,
     \myfrac{\psi^{\pa}_{d}}{2 \sqrt{d \ts} \ts} \, B^{\trans} \theta 
     \Bigl( \bigoplus_{\;\fp\in\cP^{\prime}_{\!\cO}} \fp^{-\kappa_{\fp}}
     \! / \cO    \Bigr)
\]   
   in generalisation of our previous expressions.  \qed
\end{coro}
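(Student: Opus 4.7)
The plan is to reduce the corollary to Theorem~\ref{thm:spectrum-real} via the linear change of coordinates $T=B^{-1}$ on $\RR^2$, followed by a restriction to a fundamental domain for the $\ZZ^2$-action. First, I would note that $B^{-1}\theta(\cO)=\ZZ^2$ is immediate from the defining property of $B$ in Eq.~\eqref{eq:basis-real}, so that $U=B^{-1}V\subset\ZZ^2$ and the hulls $\XX^{\pa}_{U}$ and $\YY^{\pa}_{U}$ arise from those of $V$ by pushforward under $T$. In particular, $(\YY^{\pa}_{U},\RR^2,\mu_{_\mathrm{M}})$ is measure-theoretically conjugate to $(\YY^{\pa}_{V},\RR^2,\mu_{_\mathrm{M}})$ via $T$, with a compatible conjugacy between the discrete systems $(\XX^{\pa}_{U},\ZZ^2,\mu_{_\mathrm{M}})$ and $(\XX^{\pa}_{V},\theta(\cO),\mu_{_\mathrm{M}})$.

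Next, I would derive the transformation law for FB coefficients under $T$. Starting from the defining formula analogous to \eqref{eq:FB-def-1}, substituting $z=Tx$ in the exponential sum, and using $\langle Tx,y\rangle=\langle x,T^{\trans}y\rangle$ together with the fact that $(TB_r)^{\pa}_{r>0}$ remains a van Hove sequence along which the FB limit can be computed, one obtains
\[
   a^{\pa}_{TV}(y) \, = \, \myfrac{1}{\lvert\det T\rvert}\,
   a^{\pa}_{V}(T^{\trans}y) \ts .
\]
With $T=B^{-1}$, this shows that $a^{\pa}_{U}(y)\ne 0$ exactly when $B^{-\trans}y$ lies in the FB spectrum of $V$ as given by Theorem~\ref{thm:spectrum-real}, which is equivalent to $y\in B^{\trans}\theta(L^{\circledast})$. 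The identification of this set with the dynamical spectrum of $(\YY^{\pa}_{U},\RR^2,\mu_{_\mathrm{M}})$ then follows from the diffraction--dynamical spectrum correspondence used throughout the paper.

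Finally, for the $\ZZ^2$-action, I would invoke Remark~\ref{rem:dual-discrete}: since $\ZZ^2$ is self-dual, the spectrum of $(\XX^{\pa}_{U},\ZZ^2,\mu_{_\mathrm{M}})$ is obtained by reducing $B^{\trans}\theta(L^{\circledast})$ modulo $\ZZ^2$, with $\TT^2=[0,1)^2$ as fundamental domain. The compatibility identity $B^{\trans}\theta(\cO^{*})=\ZZ^2$, which follows from Lemma~\ref{lem:dual-order-real} together with the general rule $(B^{-1}\vG)^{*}=B^{\trans}\vG^{*}$, allows this quotient to be pulled through $B^{\trans}\theta$ into a quotient by $\cO^{*}$ inside $K$, yielding $B^{\trans}\theta(L^{\circledast}/\cO^{*})$. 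Substituting the direct-sum decomposition $L^{\circledast}/\cO^{*}=\myfrac{\psi^{\pa}_{d}}{2\sqrt{d\ts}}\bigoplus_{\fp\in\cP^{\ts\prime}_{\!\cO}}\fp^{-\kappa^{\pa}_{\fp}}/\cO$ that is available from the proof of Theorem~\ref{thm:spectrum-real} then gives the claimed formula. The main obstacle is the careful verification of the compatibility $B^{\trans}\theta(\cO^{*})=\ZZ^2$, because this is precisely what makes the reduction modulo $\ZZ^2$ on the $\RR^2$-side match the reduction modulo $\cO^{*}$ inside $K$; once it is settled, the remaining steps are a direct substitution into Theorem~\ref{thm:spectrum-real}.
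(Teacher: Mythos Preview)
Your proposal is correct and follows the same line of reasoning the paper implicitly relies on: the corollary is stated with a \qed\ and no proof, because it is meant to follow directly from Theorem~\ref{thm:spectrum-real} together with the general rule $(B^{-1}\vG)^{*}=B^{\trans}\vG^{*}$, which the paper already spelled out in the Eisenstein case. Your derivation of the transformation law $a^{\pa}_{TV}(y)=\lvert\det T\rvert^{-1}\, a^{\pa}_{V}(T^{\trans}y)$ and of the compatibility $B^{\trans}\theta(\cO^{*})=\ZZ^2$ is exactly the content that the paper leaves to the reader.

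One small point: you refer to ``the proof of Theorem~\ref{thm:spectrum-real}'' for the direct-sum decomposition $L^{\circledast}/\cO^{*}=\frac{\psi^{\pa}_{d}}{2\sqrt{d}}\bigoplus_{\fp}\fp^{-\kappa_{\fp}}/\cO$, but that theorem is also stated with \qed\ and carries no written proof. The decomposition you need is available instead from Proposition~\ref{prop:gen-latt} and Theorem~\ref{thm:gen-latt} (or by transporting the argument given for Theorem~\ref{thm:spectrum-imag} to the real case), together with Lemma~\ref{lem:dual-order-real} to identify $\cO^{*}$ with $\frac{\psi^{\pa}_{d}}{2\sqrt{d}}\,\cO$. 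Adjusting that citation, your argument goes through without change.
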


This corollary illustrates the role of the explicit representation for
the dynamics. Up to group isomorphism, we get the same spectra as in
Theorem~\ref{thm:spectrum-real}, but the representations are rather
different. This is precisely what the Halmos--von Neumann theorem uses
to distinguish ergodic measure-theoretic dynamical systems with
pure-point spectrum up to measurable conjugacy. When they fail to be
conjugate, this can be caused by a linear transform (as here), but it
can also be more substantial (and thus more relevant).

While we have treated a particular class of examples, the structure is
sufficiently robust to allow for various generalisations. Also, the
role of different ergodic measures can certainly be explored further,
where we expect the FB coefficients to feature prominently. It is
presently open to what extent the more detailed results can be
transferred from the number-theoretic setting to the much more general
lattice setting.

\appendix
\section{Bernoulli thinning of uniformly distributed 
sequences}\label{sec:app}

The purpose of this appendix is a brief, self-contained exposition of
the equidistribution result for random subsequences of a uniformly
distributed sequence, adapted to our needs in the main text. The
result as such is well known, but not so easy to locate in the
literature; see \cite{KN} for general background and \cite{Rubel} (and
references therein) for a more specific treatment in the context of
locally compact groups.

Below, we shall first use the standard approach to uniform
distribution of sequences in $[0,1]$, and then its generalisation to
bounded sets $U\subset \RR$ (or $U \subset \RR^d$) such that
$\mathbf{1}^{\pa}_{U}$ is Riemann integrable, which is the case if and
only if $\partial U$ has zero Lebesgue measure.  Such sets are also
called \emph{Jordan measurable}. There is a well-known generalisation
to (relatively) compact subsets of second countable, locally compact
Abelian groups, but we shall also need uniform distribution in compact
sets with a `fat' boundary (of positive measure).

Let us begin with the unit interval, compare \cite[Ch.~1]{KN}, and let
the sequence $(a^{\pa}_{m})^{\pa}_{m\in \NN}$ be uniformly distributed in
$[0,1]$, which is to say that, for all
$0\leqslant \alpha < \beta \leqslant 1$, one has
\begin{equation}\label{app:eq-1}
    \lim_{n\to\infty} \myfrac{1}{n} 
    \sum_{m=1}^{n} \bs{1}^{\pa}_{[\alpha,\beta]}
    (a^{\pa}_{m}) \, = \, \beta-\alpha \ts .
\end{equation}
Now, we are interested in the question what happens if we go to a
subsequence $\bigl( a^{\pa}_{m_i}\bigr)_{i\in\NN}$ that emerges from a
random selection of elements. The standard formulation employs a
family $(\xi^{\pa}_{i})^{\pa}_{i\in\NN}$ of i.i.d.\ Bernoulli random
variables, which take values in $\{ 0,1\}$ with common distribution
defined by $\PP (\xi^{\pa}_{1} = 1) = p \in (0,1)$. Then, each
realisation gives rise to such a subsequence by keeping the $a^{\pa}_{m}$
where $\xi^{\pa}_{m} =1$. We call this \emph{Bernoulli thinning}. We now
need the \emph{strong law of large numbers} (SLLN) in several forms.

Let $0\leqslant \alpha < \beta \leqslant 1$ be arbitrary, but fixed.
One can define a new family of random variables via
$X^{\pa}_m = \xi^{\pa}_m \bs{1}^{\pa}_{[\alpha,\beta]} (a^{\pa}_{m})$, which are
independent, but no longer identically distributed. Each $X_n$ is
either identically $0$ or takes values in $\{ 0, 1\}$, so the variance
is always bounded by $1$. Then, Kolmogorov's version of the SLLN
applies, compare \cite[Thm.~14.5]{Bauer}, and we get
\[
     \lim_{n\to\infty} \myfrac{1}{n} 
    \sum_{\ell=1}^{n} \bigl( X^{\pa}_{\ell} - \EE (X^{\pa}_{\ell}) \bigr)
     \, = \, 0 \ts , 
\]
which holds almost surely, that is, for almost all realisations of the
thinning. Since $\EE (X^{\pa}_{\ell}) = p$ if
$a^{\pa}_{\ell} \in [\alpha, \beta]$ and $\EE (X^{\pa}_{\ell}) = 0$
otherwise, using \eqref{app:eq-1}, one has
\[
    \myfrac{1}{n} \sum_{\ell=1}^{n} \EE (X^{\pa}_{\ell}) 
    \, = \, \myfrac{p}{n} \sum_{\ell=1}^{n}
    \bs{1}^{\pa}_{[\alpha, \beta]} (a^{\pa}_{\ell})
    \, \xrightarrow{\, n\to\infty\,} \, p \ts (\beta - \alpha) \ts .
\]
Observing that $\frac{1}{n} \sum_{i=1}^{n} \xi^{\pa}_{i} $ almost surely
converges to $p$, by the standard version of the SLLN, we get, almost
surely as $n\to\infty$, the asymptotic behaviour
\[
    \myfrac{1}{n} \sum_{m=1}^{n} X^{\pa}_{m}
    \, \sim \, \frac{p}{\sum_{i=1}^{n} \xi^{\pa}_{i}}
    \sum_{\substack{m=1_{\vphantom{a}} \\ \xi^{\pa}_{m} = 1}}^{n}
    \bs{1}^{\pa}_{[\alpha,\beta]} (a^{\pa}_{m}) \, = \,
    \myfrac{p}{\#^{\pa}_n} \sum_{i=1}^{\#^{\pa}_n} \bs{1}^{\pa}_{[\alpha,\beta]}
    (a^{\pa}_{m_i}) \ts ,
\]
where $\#^{\pa}_n$ is the (random) number of elements of the original
sequence $(a^{\pa}_{1}, a^{\pa}_{2}, \ldots , a^{\pa}_{n})$ that remain after
the Bernoulli thinning and $(m_i)^{\pa}_{i\in\NN}$ is the sequence of
integers for which $\xi^{\pa}_{m_i}=1$. Note that, almost surely as
$n\to\infty$, one has $\#^{\pa}_n \sim p \ts n$, again by the SLLN.

Putting the two pieces together, we see that
\[
    \lim_{N\to\infty} \myfrac{1}{N} \sum_{i=1}^{N} 
    \bs{1}^{\pa}_{[\alpha, \beta]} (a^{\pa}_{m_i}) 
    \, = \, \beta - \alpha \ts ,
\]
which holds almost surely in the above sense.

By a standard countability argument, we then also know the
simultaneous almost sure validity of the previous limit for all
$\alpha, \beta \in \QQ \cap [0,1]$, which is dense in $[0,1]$.

Finally, monotonicity in conjunction with inner and outer regularity
of Lebesgue measure gives us the almost sure equidistribution as
follows.

\begin{theorem}
  Let the sequence\/ $(a^{\pa}_{n})^{\pa}_{n\in\NN}$ be uniformly
  distributed in\/ $[0,1]$. Then, almost every random subsequence
  obtained from a Bernoulli thinning with probability\/ $p\in (0,1)$
  is still uniformly distributed in the unit interval.  \qed
\end{theorem}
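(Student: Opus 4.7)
The plan is to promote the computation carried out in the preceding paragraphs into a clean three-step argument: first pin down the thinned limit for a single fixed subinterval, then use countability to handle all rational endpoints simultaneously, and finally invoke regularity to upgrade to arbitrary subintervals of $[0,1]$.

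First, I fix $0\leqslant\alpha<\beta\leqslant 1$ and introduce the random variables $X^{\pa}_{m}\defeq\xi^{\pa}_{m}\bs{1}^{\pa}_{[\alpha,\beta]}(a^{\pa}_{m})$. These are independent, take values in $\{0,1\}$, and hence have uniformly bounded variance, so Kolmogorov's SLLN (\cite[Thm.~14.5]{Bauer}) yields $\frac{1}{n}\sum_{\ell=1}^{n}\bigl(X^{\pa}_{\ell}-\EE(X^{\pa}_{\ell})\bigr)\to 0$ almost surely. Since $\EE(X^{\pa}_{\ell}) = p\ts\bs{1}^{\pa}_{[\alpha,\beta]}(a^{\pa}_{\ell})$, the uniform distribution hypothesis \eqref{app:eq-1} gives $\frac{1}{n}\sum_{\ell=1}^{n}\EE(X^{\pa}_{\ell})\to p\ts(\beta-\alpha)$. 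A second application of the classical SLLN to the Bernoulli variables $\xi^{\pa}_{i}$ shows that the retained count satisfies $\#^{\pa}_{n}\sim p\ts n$ almost surely, so renormalising by $\#^{\pa}_{n}$ instead of $n$ converts the previous display into
\[
    \lim_{N\to\infty}\myfrac{1}{N}\sum_{i=1}^{N}\bs{1}^{\pa}_{[\alpha,\beta]}(a^{\pa}_{m_{i}})
    \, = \, \beta-\alpha ,
\]
holding on a full-probability event $\Omega^{\pa}_{\alpha,\beta}$.

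Second, I intersect the events $\Omega^{\pa}_{\alpha,\beta}$ over the countable family of pairs $\alpha,\beta\in\QQ\cap[0,1]$ to obtain a single full-probability event $\Omega^{\pa}_{0}$ on which the thinned equidistribution limit holds simultaneously for every rational interval. The final step is a standard sandwich argument: given any $[\alpha,\beta]\subseteq[0,1]$ and $\varepsilon>0$, I approximate $\bs{1}^{\pa}_{[\alpha,\beta]}$ from below and above by indicator functions of slightly shrunken and enlarged rational intervals whose Lebesgue measures differ by at most $\varepsilon$, and use monotonicity of the Cesàro averages together with the rational-endpoint case to conclude that the limit exists and equals $\beta-\alpha$ on $\Omega^{\pa}_{0}$.

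I expect no serious obstacle: the bounded-variance hypothesis needed for Kolmogorov's SLLN is automatic because the $X^{\pa}_{m}$ are $\{0,1\}$-valued, and the passage from rational to arbitrary endpoints only uses elementary regularity of Lebesgue measure on $[0,1]$. The one point deserving a line of care is that the rescaling from $n$ to the random counting denominator $\#^{\pa}_{n}$ must be handled via the classical SLLN for the $\xi^{\pa}_{i}$ on the \emph{same} probability space, so that all almost-sure statements can be intersected on a common full-measure set; this is routine but should be spelled out.
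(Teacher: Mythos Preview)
Your proposal is correct and follows essentially the same approach as the paper: Kolmogorov's SLLN for the bounded-variance variables $X^{\pa}_{m}=\xi^{\pa}_{m}\bs{1}^{\pa}_{[\alpha,\beta]}(a^{\pa}_{m})$, the classical SLLN for the renormalisation $\#^{\pa}_{n}\sim p\ts n$, the countability step for rational endpoints, and the monotonicity/regularity upgrade to arbitrary subintervals. Your explicit remark about intersecting the almost-sure events on a common probability space is a welcome clarification but not a departure from the paper's argument.
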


It is well known \cite[Sec.~1.1.5]{KN} that the limit in
Eq.~\eqref{app:eq-1}, by the regularity of Lebesgue measure
$\lambda_{_\mathrm{L}}$, has the generalisation
\[
   \lim_{n\to\infty} \myfrac{1}{n} \sum_{\ell=1}^{n}
   \bs{1}^{\pa}_{W} (a^{\pa}_{\ell}) \, = \, \lambda_{_\mathrm{L}} (W) \ts ,
\]
to any Jordan-measurable set $W\subseteq [0,1]$, and, more generally,
also to any Jordan-measurable set $W\subseteq [0,1]^d\subset \RR^d$,
when the sequence $(a^{\pa}_n)^{\pa}_{n\in\NN}$ is uniformly distributed in
$[0,1]^d$; consult \cite[Sec.~1.6]{KN} for further details.

There is a generalisation of the entire concept to compact subsets of
a locally compact Abelian group $G$ that is second countable (which
means that is has a countable basis for its topology); see
\cite[Ch.~3.2]{KN} as well as \cite{Rubel} and references therein. To
keep things simple, we note that we only need this generalisation for
compact Abelian groups of the product form
$H = \bigotimes_{i\in\NN} G_i$ where each $G_i$ is \emph{finite}
Abelian. Here, second countability of $H$ follows
constructively. Indeed, since each $G_i$ has only finitely many
subsets and since the finite subsets of $\NN$ are countable, the
collection
\[
  \bigl\{ (U_i)_{i\in\NN} : U_i \subseteq G_i \text{ with }
   U_i = G_i \text{ for all but finitely many } i \bigr\}
\]
defines a countable basis for the topology of $H$. We abbreviate the
product set of the $U_i$ as
$(U^{\pa}_{1}, U^{\pa}_{2}, \ldots , U^{\pa}_{N})$ if $U_i = G_i$
holds for all $i > N$, and agree to use the smallest index $N$ with
this property. These sets play the role of cylinder sets.

We now assume $G$ to be equipped with its normalised Haar measure
$\nh$, so $\nu^{\pa}_{\nts H} (H) =1$. Clearly, we then get the measure
of a cylinder set as
\[
    \nh (U^{\pa}_{1}, U^{\pa}_{2}, \ldots , U^{\pa}_{N}) \, = 
    \prod_{i=1}^{N} \frac{| U_i |}{| G_i |} \ts ,
\]
where $|.|$ denotes the cardinality of a finite set.

We can now define uniform distribution of a sequence
$(a_n)^{\pa}_{n\in\NN}$ in $H$ or in compact subsets of $H$ as follows.

\begin{definition}\label{def:gen-dist}
  Let\/ $H=\bigotimes_{i\in\NN} G_i $ be the infinite product of
  finite Abelian groups, $W\subseteq H$ a compact subset with
  $\nh (W) >0$, and\/ $(a_n)^{\pa}_{n\in\NN}$ a sequence with values in\/
  $W\!$. Then, this sequence is called\/ \emph{uniformly distributed}
  in\/ $W$ if
\[
  \lim_{n\to\infty} \myfrac{1}{n} \sum_{m=1}^{n} \mathbf{1}^{\pa}_{C}
  (a^{\pa}_{m}) \, = \, \nh (W\nts\cap C)
\]  
holds for every cylinder set\/
$C = (U^{\pa}_{1}, U^{\pa}_{2}, \ldots , U^{\pa}_{N})$ of the type
introduced above.
\end{definition}

Then, our previous thinning argument can be applied to any set of the
form $W\nts \cap C$, of which there are countably many, and we get the
following result.

\begin{coro}\label{coro:ud-in-W}
  Let\/ $H = \bigotimes_{i\in\NN} G_i$ with all\/ $G_i$ being finite
  Abelian groups, and let\/ $W\subseteq H$ be a compact subset with\/
  $\nh (W)>0$.  Further, let\/ $(a^{\pa}_{m})^{\pa}_{m\in\NN}$ be a sequence
  with values in\/ $W$ that is uniformly distributed in\/ $W\!$. Then,
  almost every random subsequence\/ $(a^{\pa}_{m_i})^{\pa}_{i\in\NN}$
  obtained from a Bernoulli thinning with probability\/ $p\in (0,1)$
  is still uniformly distributed in\/ $W\!$, in the sense of
  Definition~$\ref{def:gen-dist}$. \qed
\end{coro}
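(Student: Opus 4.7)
The proof plan is to transplant, nearly verbatim, the Kolmogorov-SLLN argument used earlier in the appendix for uniform distribution in $[0,1]$, with cylinder sets taking the role of the intervals $[\alpha,\beta]$. The key simplification is that Definition~\ref{def:gen-dist} formulates uniform distribution in $W$ directly in terms of cylinder sets, so the regularity/Jordan-measurability step (which in the $[0,1]$ case was used to pass from intervals with rational endpoints to arbitrary Jordan-measurable sets) is not needed here.

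Concretely, I would first fix an arbitrary cylinder set $C = (U^{\pa}_{1}, \ldots , U^{\pa}_{N})$ and form $X^{\pa}_{m} \defeq \xi^{\pa}_{m} \ts \mathbf{1}^{\pa}_{C}(a^{\pa}_{m})$. These are independent $\{0,1\}$-valued random variables with variance bounded by $1$, so Kolmogorov's version of the SLLN gives $\frac{1}{n}\sum_{\ell=1}^{n}(X^{\pa}_{\ell} - \EE(X^{\pa}_{\ell}))\to 0$ almost surely. Since $\EE(X^{\pa}_{\ell}) = p\cdot\mathbf{1}^{\pa}_{C}(a^{\pa}_{\ell})$, the assumed uniform distribution of $(a^{\pa}_{m})$ in $W$ yields
\[
  \myfrac{1}{n}\sum_{\ell=1}^{n}\EE(X^{\pa}_{\ell}) \, \xrightarrow{\,n\to\infty\,} \, p \, \nh(W\cap C) \ts .
\]
Combining this with the standard SLLN for the retention count $\#^{\pa}_{n} = \sum_{i\leqslant n}\xi^{\pa}_{i}$ (so $\#^{\pa}_{n}/n \to p$ almost surely), an elementary manipulation of indices, identical to the one carried out in the $[0,1]$ case, shows that the random subsequence $(a^{\pa}_{m_i})^{\pa}_{i\in\NN}$ satisfies
\[
   \lim_{N\to\infty} \myfrac{1}{N}\sum_{i=1}^{N}\mathbf{1}^{\pa}_{C}(a^{\pa}_{m_i}) \, = \, \nh(W\cap C)
\]
almost surely, which is precisely the defining identity of Definition~\ref{def:gen-dist} evaluated on $C$.

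The closing step is a countability argument: the topology on $H = \bigotimes_{i\in\NN} G_i$ has a countable basis of cylinder sets, as spelled out in the appendix above the definition, so the intersection over all such $C$ of the almost-sure events from the previous step is again almost sure. This simultaneously upgrades the one-cylinder statement to uniform distribution of the thinned sequence in $W$. I do not anticipate a genuine obstacle here: the whole point of phrasing Definition~\ref{def:gen-dist} via cylinder sets is to sidestep the boundary-regularity issues that would otherwise arise for compact subsets $W \subseteq H$ whose boundary has positive Haar measure, and this is exactly the `fat boundary' case the main text needs in its applications to weak model sets.
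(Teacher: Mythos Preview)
Your proposal is correct and is essentially the paper's own argument spelled out in full: the paper merely says that ``our previous thinning argument can be applied to any set of the form $W\cap C$, of which there are countably many'', and you have reproduced exactly that, including the observation that the regularity step is unnecessary because Definition~\ref{def:gen-dist} is already phrased in terms of the countable family of cylinder sets.
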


It is clear that several immediate generalisations are possible for
going beyond finite factors, but we leave further details in this
direction to the interested reader.

If we are in the situation of Corollary~\ref{coro:ud-in-W}, we know
that the sequence of probability measures defined by
$\frac{1}{n} \sum_{j=1}^{n} \delta_{a_j}$ weakly converges to the
probability measure $\frac{1}{\nh (W)} \, \nh \nts \nts\big|_{W}$.

Uniform distribution of a sequence $(a_m)^{\pa}_{m\in\NN}$ in a compact
set $W$ means that $\frac{1}{n}\sum_{m=1}^{n} a_m$, as $n\to\infty$,
converges to $\int_{W} \dd \nh = \nh (W) = \nh(\bs{1}^{\pa}_{W})$. More
generally, for every continuous function $f$ on $H$, we also get
\[
   \lim_{n\to\infty} \myfrac{1}{n} \sum_{m=1}^{n} f (a_m) \, 
   = \int_{W} f(x) \dd \nh (x) \ts ,
\]
provided the sequence is uniformly distributed in $W$. In particular,
this holds for all characters on $H$, which are the continuous group
homomorphisms from $H$ into the unit circle. This result then allows
to determine the FB coefficients of model sets, including weak model
sets of maximal density like the ones in this paper, on the basis
of the result from \cite{Moody}.

\section*{Acknowledgements}

It is our pleasure to thank Philipp Gohlke, Markus Kirschmer,
Christoph Richard, Nicolae Strungaru and Vitali Wachtel for helpful
discussions, as well as Fabian Gundlach, J\"{u}rgen Kl\"{u}ners, Neil
Ma\~{n}ibo, Jan Maz\'{a}\v{c}, Andreas Nickel and Timo Spindeler for 
useful hints on the manuscript.

This work was supported by the German Research Council (Deutsche
Forschungsgemeinschaft, DFG) under SFB-TRR 358/1 (2023) -- 491392403
(MB, DL) and by the Austrian Science Fund FWF: P-33943-N (TS), by a
grant from the priority research area SciMat under the Strategic
Programme Excellence Initiative at Jagiellonian University (TS), and
by the MSCA individual fellow project ErgodicHyperbolic - 101151185
(TS).  \bigskip

\end{document}